\tikzstyle{b_vertex}=[circle,fill=black!100,text=white,inner sep=0.8mm,draw]
\tikzstyle{w_vertex}=[circle,fill=white!100,text=white,inner sep=0.8mm,draw]
\tikzstyle{point}=[circle,fill=black,inner sep=0.1mm]
\tikzstyle{path_edge}=[thick]
\theoremstyle{plain}
\newtheorem{theorem}{Theorem}[section]
\newtheorem{definition}[theorem]{Definition}
\newtheorem{lemma}[theorem]{Lemma}
\newtheorem{corollary}[theorem]{Corollary}
\newtheorem{proposition}[theorem]{Proposition}
\newtheorem{Conjecture}[theorem]{Conjecture}
\newtheorem{example}[theorem]{Example}
\newtheorem{remark}[theorem]{Remark}
\newtheorem{question}[theorem]{Question}
\newtheorem{prob}[theorem]{Problem}
\newenvironment{customclaim}[1]
  {\innercustomclaim}
  {\endinnercustomclaim}
\newtheorem*{claim*}{Claim}
\newtheorem*{observation*}{Observation}
\newenvironment{customthm}[1]
  {\innercustomthm}
  {\endinnercustomthm}
\newcommand{\ack}[1][\ackname]{\subsection*{#1}}
\theoremstyle{definition}
\newcommand{\diam}{\mathrm{diam}}
\newcommand{\conv}{\mathrm{conv}}
\newcommand{\Lk}{\mathrm{Lk}}
\newlist{thmenum}{enumerate}{1} 
\setlist[thmenum]{label=\textup{(\roman*)},
                  ref=\thetheorem.\textup{(\roman*)}}
\crefname{thmenumi}{theorem item}{theorem items}
\newcommand{\R}{\mathbb{R}}
\newcommand{\RR}{\mathcal{R}}
\newcommand{\Sc}{\mathcal{S}}
\def\s{\mathbb{S}}
\def\D{\mathcal{D}}
\def\O{\mathcal{O}}
\newcommand{\sub}[1]{{\left\langle{#1}\right\rangle}}
\numberwithin{equation}{section}
\begin{document}
	\title{On the Classification of Planar-Rips complexes and their corresponding unit disk graphs}
	\vspace{2cm}
	
	 \author{Vinay Sipani}
\address{Department of Mathematics\\ Indian Institute of Technology\\ Chennai - 600036 \\ India.}
\email{sipanivinay@gmail.com; vinaysipani@alumni.iitm.ac.in}
	
\author{Ramesh Kasilingam}
	
	\address{Department of Mathematics\\ Indian Institute of Technology\\ Chennai - 600036 \\ India.}
\email{rameshkasilingam.iitb@gmail.com  ; rameshk@iitm.ac.in}
	
	\date{}
	\subjclass [2024] {Primary : {05E45, 05C12, 68U05; Secondary : 05C10, 57M15}}
	\keywords{Vietoris-Rips complex, obstruction, unit disk graph, pseudomanifold, minimal cycle, closed}
	
	\begin{abstract}
Given a metric space $(X,d)$, the Vietoris-Rips complex of $X$ at a scale of $r >0$ is a simplicial complex whose simplices are all those finite subsets of $X$ with diameter less than $r$. In this paper, we classify, up to simplicial isomorphism, all $n$-dimensional pseudomanifolds and weak-pseudomanifolds that can be realized as a Vietoris-Rips complex of planar point sets. We further classify two-dimensional, pure, and closed planar-Rips complexes up to homotopy. Additionally, we explore the hereditary properties and introduce the notion of obstructions in planar-Rips complexes. We also consolidate our findings to describe a class of unit disk graphs, having all maximal cliques of same cardinality. Several structural and geometric properties of planar-Rips complexes have also been derived.
	\end{abstract}
\maketitle
\section{Introduction}
Given a metric space $(X,d)$ and a scale parameter $\delta>0$, the Vietoris-Rips complex $\RR(X;\delta)$ is a simplicial complex whose simplices are all those finite subsets of $X$ with diameter less than $\delta$, i.e., $\RR(X;\delta)=\{A \subseteq X \colon |A|<\infty \text{ and } d(x,y)<\delta \  \forall x,y \in A\}$.The Vietoris-Rips complex is a fundamental construct in computational topology, serving as a bridge between discrete data and continuous shape analysis. First introduced by Vietoris \cite{Vietoris} in the context of his work on (co)homology theory and covering spaces, and later independently by Rips in the context of geometric group theory \cite{Rips}, both used it essentially to understand complex topological spaces through a combinatorial framework. In this paper, we will only be working within the Euclidean setting, and all our points sets will be finite subsets of $\R^2$. Henceforth, we shall refer to them as \textbf{planar-Rips complexes} \cite{planar1}.

\begin{definition}
     A finite simplicial complex $K$ with the vertex set $V$ is said to admit a planar-Rips structure if there exists an injective map $f : V \to \R^2$ such that $K$ is isomorphic to the Vietoris-Rips complex $\RR(f(V);\delta)$ for some $\delta>0$ under the usual Euclidean metric.
\end{definition}
This paper is specifically streamlined towards determining the simplicial complexes that (do not) admit a planar-Rips structure. Therefore, with the scale-invariance under consideration, we fix $\delta=1$ throughout and use the notation $\RR(X)$ instead of $\RR(X;1)$.
Our study is primarily driven by the works of \cite{hausmann,latschev,planar1,planar2,circle} where the respective authors have widely addressed the characterization problem of planar-Rips complexes at various scales. 

In his paper \cite{hausmann}, Hausmann demonstrated that up to homeomorphism, every $n$-sphere admits a planar-Rips structure for some subset of circle $\s^1$. This further implies that for a given wedge of spheres, there exists a subset $X \subset \R^2$ whose planar-Rips complex is homotopy equivalent to the given wedge.  In \cite{planar1}, Chambers et al. prove that the planar-Rips complexes have free fundamental group. As a consequence, one can easily verify that every finite two-dimensional planar-Rips complex is homotopy equivalent to wedge of circles and $2$-spheres.

In \cite{circle}, Adamaszek and Adams obtain a wide range of homotopy types of planar-Rips complexes for subsets of circle $\s^1$. In particular, Adamaszek et al. in \cite{nerve} show that the planar-Rips complex of any finite subset of $\s^1$ is homotopy equivalent to either a point, an odd sphere, or a wedge sum of spheres of the same even dimension. Further in \cite{planar2}, Adamaszek et al. show that the cross-polytopal spheres are the only normal pseudomanifolds that admit planar-Rips structure. With this, they postulate the following conjecture, a problem which still remains open: 
\begin{Conjecture} \cite[Problem 7.3]{planar2}
    For any $X \subset \R^2$, the planar-Rips complex $\RR(X)$ is either contractible or homotopy equivalent to wedge of spheres.
\end{Conjecture}

In this spirit, we extend the planar-Rips classification to a larger class of pure simplicial complexes, thereby providing an affirmative evidence supporting the conjecture. Our main results are as follows:

\begin{customthm}{A} (See Theorem \ref{octa} $\&$ \ref{pseudorips})\label{A}
  The n-dimensional cross-polytopal spheres ($n\geq 2$) are the only pseudomanifolds that admit planar-Rips structure.
\end{customthm} 

This will be an inductive proof. We use this to further give a planar-Rips classification of $n$-dimensional weak-pseudomanifolds, each of whose strongly connected component is a pseudomanifold. (See Section \ref{sec3} for definition of pseudomanifold and weak-pseudomanifold)

\begin{customthm}{B} (See Theorem \ref{weakiswedge} $\&$ \ref{weakhomotopy}) \label{B}
    An n-dimensional weak pseudomanifold ($n \geq 2)$ admitting a planar-Rips structure is isomorphic to an iterated chain of $n$-dimensional cross-polytopal spheres. Further, they are homotopy equivalent to wedge sum of circles and $n$-spheres.
\end{customthm}

The tools from \cite{planar1,planar2} and the proximity relations derived towards proving the above two theorems are further applicable towards the characterization of two-dimensional pure and closed simplicial complexes that admit planar-Rips structure.

\begin{customthm}{C} (See Theorem \ref{minimal}  $\&$ Proposition \ref{b0}) \label{C}
    Suppose $K$ is a two-dimensional pure and closed simplicial complex admitting a planar-Rips structure, then
    \begin{enumerate}[label=(\alph*)]
\item $K$  contains a copy of octahedron as an induced subcomplex.
\item The number of copies of octahedron as induced subcomplex in $K$ is same as its second Betti number $b_2(K)$.
\end{enumerate}
\end{customthm}

As a simple consequence of the above theorem, if $K$ is a minimal $2$-cycle, then it isomorphic to octahedron.  Furthermore, $K$ is homotopy equivalent to wedge of circles and $2$-spheres.

The planar-Rips complexes are also useful from an application standpoint because of their ability to faithfully represent proximity-based relationships in the Euclidean plane. Recall that a (unit) disk graph is the intersection graph of (unit) equal-radius disks in the Euclidean plane. Notably, a natural isomorphism exists between the category of planar-Rips complexes and the category of disk graphs, with disk graphs being precisely the 1-skeleton of planar-Rips complexes. For more on (unit) disk graphs, we refer the reader to \cite{udg1,udg3,udg5,udg6}

In light of the isomorphism, we use our findings to characterize a class of disk graphs with a uniform clique structure. In particular, the graph analogs to Theorems \ref{A},\ref{B},\ref{C} have been substantiated in Section \ref{applications}, which significantly streamline the graph recognition process for disk graphs.

Another intriguing aspect of planar-Rips complexes is that they satisfy hereditary property. This means that every induced subcomplex is also a planar-Rips complex. It is well established that any hereditary class of simplicial complexes can be fully characterized in terms of minimal forbidden induced subcomplexes called \textbf{obstructions}. These obstructions represent critical subcomplex patterns that cannot appear as induced subcomplexes within any complex in the class. For example, the hereditary class of flag complexes can also be defined as the simplicial complexes that do not contain the boundary complex of any $n$-simplex ($n>1$) as an induced subcomplex.  The study of obstructions and
hereditary properties are found in \cite{obs1, obs2, obs3, obs4}.  

In essence, any simplicial complex that does not admit a planar-Rips structure further contains a `minimal' forbidden induced subcomplex. Following Theorems \ref{A},\ref{B},\ref{C}, we have indeed obtained a wide spectrum of simplicial complexes that do not admit a planar-Rips structure. This opens up the possibility of obtaining a large collection of obstructions of planar-Rips complexes by extracting each one as an induced subcomplex from the forbidden planar-Rips structures. The feasibility of establishing such a framework has been demonstrated by a few examples in Section \ref{applications}. For example, Theorem \ref{obsinrp2} substantiates an obstruction of planar-Rips complex, derived from a minimal flag triangulation of the projective space $\R P^2$.

\ack

We thank Henry Adams and Jan Segert for making the \textit{Mathematica Demonstration on Vietoris-Rips and $\check{C}ech$ complexes} publicly available \cite{nb}, which aided in refining our approaches. We also thank Priyavrat Deshpande and Anurag Singh for introducing the notion of obstructions to shellability during the NCMW workshop \textit{``Cohen-Macaulay Simplicial Complexes in Graph Theory"}, which inspired the extension of this concept for planar-Rips complexes.

\subsection{Organization of the paper}
In Section \ref{prelim}, a background overview and key topological concepts are introduced relevant to this paper. Section \ref{sec3} delves into developing the necessary geometric aspects to prove Theorem \ref{A} and Theorem \ref{B}. Section \ref{sec4} focuses on establishing the structural characterization of two-dimensional closed planar-Rips complexes as part of proving Theorem \ref{C}. Lastly, in Section \ref{applications}, we consolidate our findings on planar-Rips complexes in terms of unit disk graphs.  In addition, the hereditary property of planar-Rips complexes has been explored and the notion of obstructions of planar-Rips complexes has been conceptualized with few illustrative examples.

\section{Preliminaries} \label{prelim}

Throughout this paper, we consider the point set $X$ to be a finite subset of $\R^2$ and the planar-Rips complex $\RR(X)$ to be connected. Further, the geometric realization of $\RR(X)$ is considered while referring to the associated topological properties of $\RR(X)$. We say two points/vertices are adjacent in $\RR(X)$ if the distance between them is less than $1$ i.e., there is an edge between them in $\RR(X)$.

We shall also use the following notation: Given a simplicial complex $K$ on a vertex set $V$, the \textit{link of a simplex} $\sigma$ in $K$ is defined to be $\left\{\tau \in K \mid \tau \cap \sigma=\emptyset, \tau \cup \sigma \in K \right\}$, and is denoted by $\Lk(\sigma)$. For a subset $U$ of $V$, denote the induced subcomplex of $K$ on $U$ by $\sub{U}$.

We now define the concept of ``shadow" of a planar-Rips complex $\RR(X)$ inside $\R^2$. This notion was originally introduced by Chambers et al.\cite{planar1} to simplify the computability. Likewise, we employ the concept of shadow to simplify the representation of proximity relations and certain topological properties of planar-Rips complexes.

\begin{definition}[Shadow, projection map \cite{planar2}] Given a finite set $X \subset \mathbb{R}^2$, the shadow $\Sc(X)$ is the image of the planar-Rips complex $\RR(X)$ under the projection map $p \colon \RR(X) \to \R^2$ which maps each vertex in $X$ to the corresponding point inside $\R^2$, and extends linearly to the simplices of $\RR(X)$. More precisely,
     \[\Sc(X)\coloneq \bigcup_{\substack{Y\subseteq X, \ |Y|\leq 3\\
                  \diam(Y)<1}} \conv \ Y\]
\end{definition}

Note that by Caratheodory's theorem, the convex hull of any finite set $Y$ in $\R^2$ is equal to the union of convex hulls of at most 3-element subsets of $Y$. Henceforth, the condition $|Y| < \infty$ is redundant to $|Y| \leq 3$ in the above defintion. 
\begin{example}
Figure \ref{shadow} illustrates a point set $X$ in $\R^2$, forming a four-dimensional planar-Rips complex $\RR(X)$. Along with it is a representation of projection map onto its shadow $\Sc(X)$.
\begin{figure}
\centering
  \includegraphics[scale=0.25]{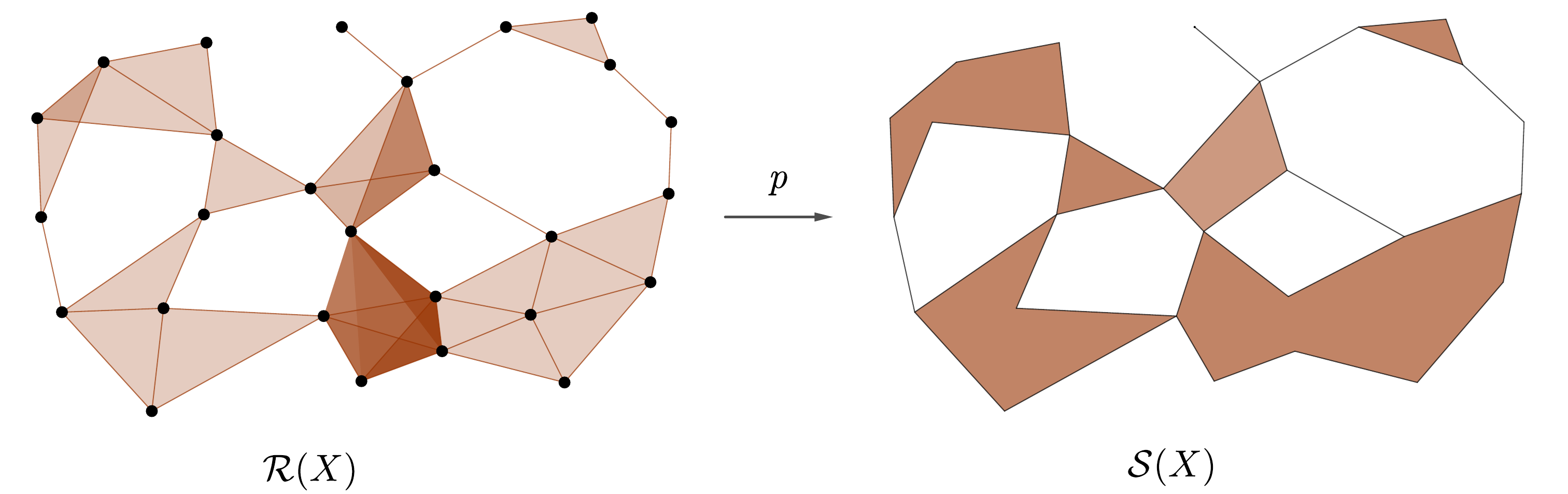}
  \caption{An illustration of projection of a planar-Rips complex $\RR(X)$ onto its shadow $\Sc(X)$ inside Euclidean plane.}
  \label{shadow}
\end{figure}
\end{example}

 The following result by Chambers et al. in \cite{planar1} depicts the topological faithfulness of the projection map $p$, representing the fundamental group of $\RR(X)$ in terms of its shadow $\Sc(X)$. 
\begin{theorem}\cite{planar1} \label{free}
For any finite subset $X \subset \R^2$, the projection map $p\colon\RR(X) \to \Sc(X)$ induces a $\pi_1$-isomorphism, hence implying that the fundamental group $\pi_1 (\RR(X))$ is free.
\end{theorem}

It has been readily apparent in our study that the applicability of proximity relations between the points is quite inevitable for the study of planar-Rips complexes. Hence, there is a need to deduce many more such properties in pertinence to the geometry of $\R^2$. The following are some key properties of planar-Rips complexes that will be referenced throughout this paper.

\begin{lemma} \cite[Proposition. 2.1]{planar1}\label{cone}  Suppose $X\subset \R^2$ and let $A,B,C,D$ be four distinct points in $X$ such that the point $A$ is adjacent to $B$, and $C$ is adjacent to $D$ but not $A$ in $\RR(X)$. Further, the segments $\overline{AB}$ and $\overline{CD}$ intersect in the shadow $\Sc(X)$. Then $B$ is adjacent to $D$ in $\RR(X)$.
\end{lemma}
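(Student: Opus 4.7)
The plan is to exploit the crossing of the two segments via a triangle-inequality argument on the diagonals of the resulting quadrilateral configuration. Let $P \in \R^2$ denote the intersection point of the segments $\overline{AB}$ and $\overline{CD}$, which exists by hypothesis. Since $P$ lies on both segments, I can decompose the two short distances as
\begin{equation*}
|AB| = |AP| + |PB|, \qquad |CD| = |CP| + |PD|.
\end{equation*}

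Next, I would apply the triangle inequality to the two triangles $APC$ and $BPD$ formed by the non-collinear pairs of subsegments meeting at $P$, which yields
\begin{equation*}
|AC| \le |AP| + |PC|, \qquad |BD| \le |BP| + |PD|.
\end{equation*}
Adding these and substituting the decomposition above gives the crucial inequality
\begin{equation*}
|AC| + |BD| \;\le\; \bigl(|AP| + |PB|\bigr) + \bigl(|CP| + |PD|\bigr) \;=\; |AB| + |CD| \;<\; 2,
\end{equation*}
where the strict bound uses that $A$ is adjacent to $B$ and $C$ is adjacent to $D$, hence $|AB|<1$ and $|CD|<1$.

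Finally, since $A$ is \emph{not} adjacent to $C$, we have $|AC| \ge 1$, so the displayed inequality forces $|BD| < 1$, which is precisely the adjacency of $B$ and $D$ in $\RR(X)$. There is no serious obstacle here; the only mild subtlety is ensuring that $P$ actually lies in the interiors (or at least on both segments) so that both decompositions $|AB|=|AP|+|PB|$ and $|CD|=|CP|+|PD|$ hold as equalities, which follows directly from the assumption that the two segments intersect inside the shadow $\Sc(X)$. Degenerate cases where $P$ coincides with one of the four vertices would need a one-line check, but each such case either trivializes an adjacency or contradicts the non-adjacency of $A$ and $C$.
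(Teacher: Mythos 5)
Your proof is correct and is essentially the standard argument: the paper itself gives no proof of this lemma, citing it as Proposition~2.1 of Chambers--de~Silva--Erickson--Ghrist, whose proof is exactly this crossing-point decomposition $|AC|+|BD|\le|AB|+|CD|$ followed by the non-adjacency of $A$ and $C$. The degenerate cases you flag are indeed harmless, since the triangle inequality and the decompositions at $P$ hold even when $P$ coincides with a vertex.
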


\begin{lemma} \label{k16}
The graph of $1$-skeleton of a planar-Rips complex $\RR(X)$ cannot contain an induced subgraph isomorphic to the bipartite graph $K_{1,6}$
\end{lemma}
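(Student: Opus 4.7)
The plan is to derive a contradiction by combining a pigeonhole argument on angular positions with the law of cosines. Suppose, toward a contradiction, that the $1$-skeleton of $\RR(X)$ contains an induced copy of $K_{1,6}$ with central vertex $v$ and leaves $w_1,\dots,w_6$. After translating so that $v$ lies at the origin, the adjacency of $v$ to each $w_i$ forces $a_i := |v - w_i| < 1$, while the non-adjacency of any two leaves in the induced subgraph forces $|w_i - w_j| \geq 1$ for every $i \neq j$.

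Next, I would associate to each $w_i$ its angular coordinate $\alpha_i \in [0, 2\pi)$ around $v$. The six cyclic gaps between consecutive $\alpha_i$ partition the circle and hence sum to $2\pi$, so at least one such gap is at most $\pi/3$. Picking indices $i \neq j$ realising this minimum gap, the angle $\theta_{ij}$ subtended at $v$ by $w_i$ and $w_j$ satisfies $\theta_{ij} \leq \pi/3$, so $\cos\theta_{ij} \geq 1/2$. Applying the law of cosines to the triangle with vertices $v, w_i, w_j$ then yields
\[
|w_i - w_j|^2 \;=\; a_i^2 + a_j^2 - 2 a_i a_j \cos\theta_{ij} \;\leq\; a_i^2 + a_j^2 - a_i a_j.
\]

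The argument then reduces to the elementary inequality $a^2 + b^2 - ab < 1$ for $a, b \in (0, 1)$, which I would establish in one line from the identity
\[
1 - (a^2 + b^2 - ab) \;=\; (1 - a)(1 - b) + a(1 - a) + b(1 - b),
\]
each summand being strictly positive in the stated range. Applied with $a = a_i$ and $b = a_j$, this gives $|w_i - w_j|^2 < 1$, contradicting $|w_i - w_j| \geq 1$ and completing the proof. The entire argument is a single application of pigeonhole together with the law of cosines, so the only ``obstacle'' is the clean bookkeeping of the angular coordinates and this algebraic inequality, both of which are entirely routine.
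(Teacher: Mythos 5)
Your proof is correct: the pigeonhole on the six angular gaps forces two leaves to subtend an angle of at most $\pi/3$ at the centre, and the law-of-cosines estimate $a^2+b^2-ab<1$ for $a,b\in(0,1)$ then contradicts their non-adjacency. The paper does not prove this lemma at all — it simply cites Lemma 3.2 of the unit disk graph literature — and the argument you give is exactly the standard proof of that cited fact, so you have supplied in full the same argument the paper delegates to a reference.
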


The above result is a direct consequence of \cite[Lemma 3.2]{udg1}, and has been elaborately explained in Section \ref{applications}.

\begin{lemma} \label{triangle}
Given $X \subset \R^2$, let $A$ be a subset of $X$ and $\sigma$ be a simplex such that $A \subseteq \Lk(\sigma)$ inside $\RR(X)$. Suppose a simplex $\gamma$ is in the convex hull of $A$, then $\gamma \in \Lk(\sigma)$ and every vertex of $\gamma$ is adjacent to at least one vertex of $A$ in $\RR(X)$.
\end{lemma}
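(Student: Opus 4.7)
The plan is to handle the two conclusions separately, each via an elementary convexity estimate after fixing a vertex $w$ of $\sigma$. Because $A \subseteq \Lk(\sigma)$, every $a \in A$ is adjacent to $w$ in $\RR(X)$, so $|w - a| < 1$; equivalently, $A$ lies inside the open unit disk $B(w,1) \subset \R^2$. Open disks are convex, so $\conv(A) \subseteq B(w,1)$ as well. In particular, every vertex $v$ of $\gamma$, being a point of $\conv(A)$ by hypothesis, satisfies $|v - w| < 1$. Letting $w$ range over all vertices of $\sigma$ shows that every vertex of $\gamma$ is adjacent to every vertex of $\sigma$. Combined with the fact that $\gamma$ and $\sigma$ are already simplices of $\RR(X)$, this yields $\sigma \cup \gamma \in \RR(X)$, that is, $\gamma \in \Lk(\sigma)$.

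For the adjacency assertion, I fix any vertex $v$ of $\gamma$ and write $v = \sum_i \lambda_i a_i$ with $a_i \in A$, $\lambda_i \geq 0$, and $\sum_i \lambda_i = 1$ (Carath\'eodory lets me take at most three terms, but the identity below works for any such representation). The plan is to apply the convex-combination identity
$$\sum_i \lambda_i\,|v - a_i|^2 \;=\; \sum_i \lambda_i\,|w - a_i|^2 \;-\; |v - w|^2,$$
which follows by expanding $|v - a_i|^2 = |(v - w) - (a_i - w)|^2$ and using $v - w = \sum_i \lambda_i (a_i - w)$. Since $|w - a_i|^2 < 1$ for every $i$, the right-hand side is strictly less than $\sum_i \lambda_i = 1$. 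Hence at least one summand on the left satisfies $|v - a_i|^2 < 1$, producing a vertex $a_i \in A$ adjacent to $v$ in $\RR(X)$.

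I do not foresee a serious obstacle; both parts reduce to standard convex-geometry estimates once a reference vertex $w \in \sigma$ is chosen. The subtle point is that a naive triangle inequality routed through $w$ only yields $|v - a_i| < 2$, which is too weak for the second assertion. The key technical input is therefore the displayed identity, which converts a weighted average of squared distances into a genuine sub-unit bound and so forces adjacency between $v$ and \emph{some} $a_i \in A$.
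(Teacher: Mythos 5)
Your proof is correct, and the second half takes a genuinely different route from the paper. For the first conclusion the two arguments are essentially the same in substance — the paper fixes $u\in\sigma$, uses a Carath\'eodory-type reduction to place $v\in\conv\{u,a_1,a_2\}$ and observes that this triangle lies in the unit disk around $u$, while you simply note that $A\subseteq B(w,1)$ and that the open disk is convex; your version is cleaner and avoids the paper's slightly nonstandard invocation of Carath\'eodory (which there really produces two points of $A$ plus the apex $u$ via a radial argument, not three points of $A$). For the adjacency conclusion the approaches genuinely diverge: the paper covers the triangle $\conv\{u,a_1,a_2\}$ by the two unit disks centered at $a_1$ and $a_2$ (a perpendicular-bisector argument), whereas you use the variance identity $\sum_i\lambda_i|v-a_i|^2=\sum_i\lambda_i|w-a_i|^2-|v-w|^2$ to force $|v-a_i|<1$ for some $i$ with $\lambda_i>0$. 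Your identity-based argument buys uniformity: it needs no Carath\'eodory reduction, works for any convex representation, and transfers verbatim to $\R^n$ (which the paper only asserts in a remark, and for which its disk-covering picture would need more care). Two shared, harmless implicit assumptions in both proofs: $\sigma\neq\emptyset$ (both pick a vertex of $\sigma$), and $\gamma\cap\sigma=\emptyset$, which the paper's definition of $\Lk(\sigma)$ formally requires but which holds in every application.
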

\begin{proof}
    Let $u$ and $v$ be any vertex in $\sigma$ and $\gamma$ respectively. Firstly, by Caratheodory's theorem, there exists vertices $a_1,a_2\in A$ (not necessarily distinct) such that $v \in conv(u,a_1,a_2)$. Now, since $d(u,a_i)\leq 1$ for all $i=1,2$, we have $d(u,w)\leq 1$ for any $w \in  conv(u,a_1,a_2)$. Since this is true for any $u \in \sigma$, we have $\gamma \in \Lk(\sigma)$. 

    Now consider the unit circles inside $\R^2$ centered at points $a_1,a_2$. Since the union of these circles covers $conv(u,a_1,a_2)$, the point $v$ is adjacent to at least one of $a_1, a_2$.
\end{proof}
\begin{remark}
    The above result generally holds even for the case when $X \subset \mathbb{R}^n$ under the usual Euclidean metric, and a similar proof is applicable.
\end{remark}

\begin{proposition} \label{hull}

For a Vietoris-Rips complex on planar point sets, the convex hulls of any two connected components in the link of a vertex cannot intersect in the shadow $\Sc(X)$.   
\end{proposition}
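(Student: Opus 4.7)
The plan is proof by contradiction: assume that two distinct connected components $C_1, C_2$ of $\Lk(v)$ satisfy $\conv(C_1)\cap\conv(C_2)\ne\emptyset$, and produce two crossing $\RR(X)$-edges, one inside $C_1$ and one inside $C_2$, so that \Cref{cone} then forces a forbidden adjacency between the two components.

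First I apply \Cref{triangle} with $A=C_1$ and $\sigma=\{v\}$: any vertex $b\in C_2$ lying in $\conv(C_1)$ would be forced to be adjacent to some vertex of $C_1$, contradicting that $C_1, C_2$ are different components of $\Lk(v)$. Hence no vertex of one hull lies inside the other. Since $\conv(C_1)$ and $\conv(C_2)$ are then convex polygons that overlap but absorb none of each other's vertices, their boundaries must cross transversally, producing $a_1,a_2\in C_1$ and $b_1,b_2\in C_2$ such that the hull-polygon edges $\overline{a_1 a_2}$ and $\overline{b_1 b_2}$ cross in $\Sc(X)$.

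The crux is to upgrade this to a crossing of genuine $\RR(X)$-edges, since $\overline{a_1 a_2}$ and $\overline{b_1 b_2}$ need not themselves be edges of $\RR(X)$. Because $C_1$ is connected, I choose an edge-path $P=c_0 c_1\cdots c_m$ in its $1$-skeleton from $a_1$ to $a_2$; it lies in $\conv(C_1)$. Let $\ell$ denote the line through $b_1, b_2$; since $a_1, a_2$ lie strictly on opposite sides of $\ell$, the path $P$ meets $\ell$. Because $P\subset\conv(C_1)$ and $b_1,b_2\notin\conv(C_1)$, the convex segment $\conv(C_1)\cap\ell$ sits strictly inside the open segment $(b_1,b_2)$, so the crossing is forced into the interior of $\overline{b_1 b_2}$. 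A vertex $c_i$ of $P$ landing on $\ell$ would lie strictly between $b_1$ and $b_2$, giving $d(b_1,b_2)=d(b_1,c_i)+d(c_i,b_2)\ge 2$, which is impossible since $b_1,b_2$ both lie within distance $1$ of $v$; hence the crossing occurs in the interior of some edge $\overline{c_{i^*} c_{i^*+1}}$ of $P$. Running the symmetric argument with an edge-path $Q=d_0 d_1\cdots d_n$ in $C_2$ from $b_1$ to $b_2$ then produces an edge $\overline{d_{j^*} d_{j^*+1}}$ of $\RR(X)$ crossing $\overline{c_{i^*} c_{i^*+1}}$ in $\Sc(X)$.

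Finally, \Cref{cone} applied with $A=c_{i^*}$, $B=c_{i^*+1}$, $C=d_{j^*}$, $D=d_{j^*+1}$ — noting that $C$ is not adjacent to $A$ because they live in distinct components of $\Lk(v)$ — forces $c_{i^*+1}$ adjacent to $d_{j^*+1}$, placing these two vertices from different components of $\Lk(v)$ into a common edge, the sought contradiction. I expect the main obstacle to be the middle step, namely refining a crossing of convex-hull edges into a crossing of actual $\RR(X)$-edges; the unit-distance bookkeeping just described is what rules out the degenerate possibility of a path vertex landing on the separating line.
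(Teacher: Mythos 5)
Your proof is correct, and its skeleton is the same as the paper's: \Cref{triangle} rules out either hull absorbing a vertex of the other, and \Cref{cone} is the engine that forbids a boundary crossing. The difference is one of rigor rather than route. The paper's own proof is a two-sentence sketch which asserts directly that ``from \Cref{cone} the convex hulls cannot intersect on their boundaries,'' even though the boundary edges of $\conv(C_1)$ and $\conv(C_2)$ join consecutive hull vertices that need not be adjacent in $\RR(X)$, so \Cref{cone} does not literally apply to them. Your middle step --- replacing the hull edge $\overline{a_1a_2}$ by an edge-path $P$ in the connected component $C_1$, observing that $\conv(C_1)\cap\ell$ is a connected subset of $\ell$ meeting $(b_1,b_2)$ while avoiding $b_1$ and $b_2$ and hence is contained in $(b_1,b_2)$, and using the non-adjacency bound $d(b_1,c_i)+d(c_i,b_2)\geq 2 > d(b_1,b_2)$ to keep path vertices off $\ell$ --- is precisely the bridge the paper leaves implicit, and it is sound. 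The one point you should make explicit is that the ``symmetric'' second run must be made against the line through $c_{i^*},c_{i^*+1}$ rather than against $\overline{a_1a_2}$: this still works because the crossing point of $\overline{c_{i^*}c_{i^*+1}}$ with $\overline{b_1b_2}$ lies in $\conv(C_2)$ while $c_{i^*}$ and $c_{i^*+1}$ do not (by your first step), so the same containment and distance arguments localize the crossing of the path $Q$ to the interior of the genuine edge $\overline{c_{i^*}c_{i^*+1}}$. With that, the final application of \Cref{cone} together with the flag property of $\RR(X)$ produces an edge of $\Lk(v)$ joining $C_1$ to $C_2$, the desired contradiction; your argument is a complete and strictly more careful version of the paper's.
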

\begin{proof}
Suppose $\mathcal{C}_1$ and $\mathcal{C}_2$ are any two components of link of a vertex inside $\RR(X)$. By Lemma \ref{triangle}, one of the convex hulls of $V(\mathcal{C}_1)$ and $V(\mathcal{C}_2)$ cannot contain in the other. Further, from Lemma \ref{cone}, the convex hulls cannot intersect on their boundaries.
\end{proof}

\section{Classification of Rips complexes with weak-pseudomanifold structure} \label{sec3}

In this section, we present a complete classification of well-behaved spaces known as pseudomanifolds and weak-pseudomanifolds admitting a planar-Rips structure. To accomplish this, we first introduce the necessary terminologies and tools before proceeding with the proof of Theorem \ref{A} by induction. Subsequently, we use this to further prove Theorem \ref{B}.

The dimension of a simplex will be one less than its cardinality. The \textit{dimension} of a simplicial complex is defined to be equal to the supremum of all $n$'s such that $K$ has a face of dimension $n$. A simplex in $K$ is called \textit{maximal} or a \textit{facet} if it is not contained in another larger simplex. An $n$-dimensional simplicial complex is said to be \textit{pure} if every maximal simplex is of dimension $n$. We refer the reader to look at \cite{bjorner,shastri} for more on these topics.
\begin{definition}[Strongly connected simplicial complex] A pure simplicial complex is said to be strongly connected if for any two facets $\sigma$ and $\gamma$, there exists a sequence of $n$-simplices, say $\lambda_1,\lambda_2,\dots \lambda_k,$ such that $\lambda_1=\sigma,\ \lambda_k=\gamma$ and $\lambda_i \cap \lambda_{i+1}$ is an $(n-1)$-simplex for all $1\leq i \leq k-1$.
\end{definition}

We now recursively define the classes of pure simplicial complexes viz. weak-pseudomanifolds, pseudomanifolds, and normal pseudomanifolds
\begin{definition} \label{defpseudo}
\begin{enumerate}[label=(\alph*)]
    \item An $n$-dimensional simplicial complex is said to be a \textit{weak pseudomanifold} if it is pure and each $(n-1)$-simplex is contained in exactly two $n$-simplices.
    \item An $n$-dimensional weak-pseudomanifold is called a \textit{pseudomanifold} if it is strongly connected.
    \item An $n$-dimensional pseudomanifold $(n\geq 2)$ is said to be a \textit{normal pseudomanifold} if the link of each simplex of codimension at least two is connected.
\end{enumerate} 
\end{definition}

The following result by Adamaszek et al. in \cite{planar2} gives a complete classification of normal pseudomanifolds that admit a planar-Rips structure. 

\begin{theorem} \cite[Theorem 6.3]{planar2} \label{normal} 
For any $n \geq 2$ and any finite subset $X \subset \R^2$, if the planar-Rips complex $\RR(X)$ is an $n$-dimensional normal pseudomanifold then $\RR(X)$ is isomorphic to boundary complex of the $(n + 1)$ dimensional cross-polytope.
\end{theorem}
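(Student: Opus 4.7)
The plan is to prove \Cref{normal} by induction on the dimension $d$, with the base case $d=2$ combining \Cref{free} (free fundamental group) with a careful use of the planar proximity lemmas to force the octahedron, and the inductive step exploiting that links of vertices in a normal pseudomanifold remain normal pseudomanifolds of smaller dimension.

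For the base case $d=2$, a two-dimensional normal pseudomanifold is a closed triangulated surface. Since \Cref{free} ensures $\pi_1(\RR(X))$ is free, and the only closed surface with free fundamental group is $S^2$, the complex $\RR(X)$ must triangulate the 2-sphere. To pin the triangulation down to the octahedron, I would pick any vertex $v$ and study the link cycle $v_1,\dots,v_k$ inside the open unit disk around $v$. By \Cref{triangle} and \Cref{hull}, the convex hull of $\{v_1,\dots,v_k\}$ cannot be penetrated by any other component of a link in $\Sc(X)$, so the local shadow near $v$ is tightly controlled. Combining this with \Cref{k16} (forbidding an induced $K_{1,6}$) and a planar packing estimate inside the unit disk $B(v,1)$, one forces $k=4$ with the four link vertices in a ``square" configuration. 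A final application of \Cref{cone} across the shadow of $v$ produces two antipodal ``poles" that are mutually non-adjacent, yielding exactly six vertices in three antipodal pairs, i.e.\ the octahedron.

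For the inductive step, assume the statement in dimension $d-1\geq 2$. Given a $d$-dimensional normal pseudomanifold $\RR(X)$, the link $\Lk(v)$ of any vertex $v$ is a $(d-1)$-dimensional normal pseudomanifold, and it is itself realized as the planar-Rips complex on the neighbors of $v$. By the inductive hypothesis, $\Lk(v)$ is isomorphic to the boundary of the $d$-dimensional cross-polytope, so $v$ has precisely $2d$ neighbors partitioned into $d$ non-adjacent (antipodal) pairs. Choosing an edge $uv$ and comparing the cross-polytopal links at $u$ and $v$, the $2(d-1)$ common neighbors form the boundary of the $(d-1)$-dimensional cross-polytope, while the unique vertex of $\Lk(u)$ non-adjacent to $v$ and the unique vertex of $\Lk(v)$ non-adjacent to $u$ together become the final antipodal pair; a \Cref{cone}-style argument on crossing segments in $\Sc(X)$ shows these last two vertices are themselves non-adjacent. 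One then concludes that $X$ has $2(d+1)$ vertices partitioned into $d+1$ antipodal pairs, i.e.\ $\RR(X)$ is isomorphic to the boundary of the $(d+1)$-dimensional cross-polytope.

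I expect the base case to be the main obstacle. The topological constraint $\RR(X)\cong S^2$ only narrows us to triangulations of the sphere, and triangulated spheres can have arbitrarily many vertices with large link cycles; the rigidity that pins us down to exactly six vertices must come entirely from the planar embedding. The delicate interaction between the link-cycle structure, the no-$K_{1,6}$ restriction in \Cref{k16}, the convex-hull control in \Cref{triangle}, and the non-crossing principle in \Cref{hull} has to be orchestrated through a careful case analysis on $k=|\Lk(v)|$ at a minimum-degree vertex, and any gap in ruling out $k=3$ or $k=5$ would leave room for spurious triangulations of $S^2$.
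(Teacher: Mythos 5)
First, a point of bookkeeping: the paper does not prove \Cref{normal} at all --- it is imported verbatim from Adamaszek--Frick--Vakili \cite[Theorem~6.3]{planar2} and used as a black box (indeed, the paper's own \Cref{octa} concludes by reducing a $2$-pseudomanifold to the normal case and then invoking \Cref{normal}). So there is no internal proof to compare against; what you have written is an attempt at an independent proof. Your inductive step is reasonable in outline and is close in spirit to the paper's induction in \Cref{pseudorips} (links of a flag complex are induced subcomplexes, hence again planar-Rips; links in a normal pseudomanifold are normal pseudomanifolds of one lower dimension; assemble the cross-polytope from the link of a vertex plus one ``opposite'' vertex and close off using strong connectedness). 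The lemmas you cite (\Cref{free}, \Cref{cone}, \Cref{triangle}, \Cref{hull}, \Cref{k16}) are all logically independent of \Cref{normal}, so there is no circularity.

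The genuine gap is in the base case, exactly where you predicted trouble. Your plan is to force the link of a vertex $v$ to be a $4$-cycle by ``a planar packing estimate inside the unit disk $B(v,1)$'' together with the no-$K_{1,6}$ condition. No such local argument can exist: an induced $5$-cycle among the neighbours of $v$ is locally realizable (place the five neighbours at the vertices of a regular pentagon of circumradius $r$ with $2r\sin 36^\circ<1\le 2r\sin 72^\circ$, i.e.\ roughly $0.53\le r<0.85$; all five lie within distance $r<1$ of $v$, consecutive pairs are adjacent, non-consecutive pairs are not), and an induced $6$-cycle is likewise realizable with a regular hexagon of circumradius $r\in[1/\sqrt{3},1)$. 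Neither configuration contains an induced $K_{1,6}$, and \Cref{k16} only rules out links of length $\ge 12$. The obstruction to, say, the $7$-vertex flag sphere $C_5*\s^0$ (which has two degree-$5$ vertices) is genuinely global --- the paper itself establishes this only via the minimal non-UDG $\overline{K_2+C_5}$ in \Cref{k2} --- so the rigidity pinning the triangulation of $S^2$ down to the octahedron cannot be extracted from the geometry of a single vertex star. To close the base case you would need a global argument of the kind used in \cite{planar2} (built around the guard-point machinery of \Cref{guard} and the interaction of several vertex stars in the shadow), not a packing bound at a minimum-degree vertex. As written, the base case is not a proof, and since the induction rests on it, the proposal does not yet establish the theorem.
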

Hence, apart from crosspolytopal spheres, no other manifold admits a planar-Rips structure inside $\R^2$. But it is interesting to determine the class of manifolds that admit a planar-Rips structure at least upto homotopy equivalence. For instance, the methodology in proving whether or not the manifold $\s^1 \times \s^2$ admits a planar-Rips structure upto homotopy equivalence will itself make a pavement of myriad tools to study the classification of planar-Rips structures inside $\R^2$.

A simplex of a finite weak-pseudomanifold of codimension at least two will be called \textit{singular} if its link is not connected. The proposition below provides an implication of the precise number of connected components in the link of a singular vertex when $\RR(X)$ is a weak-pseudomanifold.

\begin{proposition} \label{cycles}
    For $X\subset \R^2$, suppose the planar-Rips complex $\RR(X)$ is an $n$-dimensional weak-pseudomanifold where $n\geq 2$, then the link of any simplex of codimension at least two can have at most two strongly connected components.
\end{proposition}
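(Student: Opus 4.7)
The plan is to argue by contradiction using the $K_{1,6}$-obstruction of \Cref{k16}. Suppose $\Lk(\sigma)$ admits three distinct strongly connected components $C_1, C_2, C_3$. The strategy is to combine the weak-pseudomanifold structure of each $C_l$, the flag property of Vietoris--Rips complexes, and \Cref{hull}, to produce six vertices pairwise non-adjacent in $\RR(X)$ but all adjacent to a common vertex $v$, thereby forming a forbidden induced $K_{1,6}$.

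The main step is to extract two non-adjacent vertices from each $C_l$. Since each $C_l$ is itself a strongly connected weak-pseudomanifold of dimension $d = n - \dim\sigma - 1 \geq 1$, I pick two top-dimensional simplices $T, T' \in C_l$ sharing a codimension-one face $F$, and let $a_l, b_l$ be the unique vertices of $T, T'$ respectively lying outside $F$. If $a_l \sim b_l$ in $\RR(X)$, then by the flag property of $\RR(X)$ the set $F \cup \{a_l, b_l\}$ would be a simplex of $\RR(X)$, and hence $F \cup \{a_l, b_l\} \cup \sigma$ would be a simplex of cardinality $(d+2) + (\dim\sigma + 1) = n + 2$ and dimension $n + 1$, exceeding $\dim \RR(X) = n$. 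So $a_l \not\sim b_l$.

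It remains to verify that the six chosen vertices are distinct and pairwise non-adjacent across components. For this I would reduce to the case $\sigma = \{v\}$ by iterating $\Lk_{\RR(X)}(\sigma) = \Lk_{\Lk(v')}(\sigma \setminus v')$ for $v' \in \sigma$: by the hereditariness of planar-Rips and the standard fact that the link of a vertex in a weak-pseudomanifold is again a weak-pseudomanifold of one lower dimension, each step produces a planar-Rips weak-pseudomanifold of one lower dimension while preserving the three components. After this reduction, \Cref{hull} asserts that the convex hulls $\conv(V(C_l))$ are pairwise disjoint in $\R^2$, which forces the vertex sets $V(C_l)$ to be pairwise disjoint (a shared vertex would lie in both hulls). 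This gives distinctness of the six chosen vertices. Furthermore, any edge $\{u, w\}$ in $\RR(X)$ with $u \in V(C_l), w \in V(C_m)$, $l \neq m$, would, by the flag property, give $\{u, w\} \in \Lk(v)$ connecting its two vertex-disjoint components $C_l, C_m$ — impossible. Thus all cross pairs are non-adjacent, and the induced subgraph on $\{v, a_1, b_1, a_2, b_2, a_3, b_3\}$ is isomorphic to $K_{1,6}$, contradicting \Cref{k16}.

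I anticipate the main obstacle to be making the reduction step precise: verifying that $\Lk(v')$ inherits both the planar-Rips structure and the weak-pseudomanifold property of one lower dimension, and tracking how the three strongly connected components of $\Lk(\sigma)$ reappear as components of the link of the final remaining vertex after each peeling.
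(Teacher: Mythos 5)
Your overall strategy coincides with the paper's: assume three strongly connected components in $\Lk(\sigma)$, extract two non-adjacent vertices from each, and contradict \Cref{k16} with an induced $K_{1,6}$ centred at a vertex of $\sigma$. Your justification that each component contains a non-adjacent pair (two facets sharing a codimension-one face, plus a dimension count using flagness) is correct, and is a detail the paper leaves implicit.

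The step that does not hold as written is the appeal to \Cref{hull}. That proposition concerns \emph{connected} components of the link, not \emph{strongly connected} ones. Two distinct strongly connected components of a pure complex may share a vertex (picture two octahedra glued at a point); they then lie in the same connected component, and \Cref{hull} says nothing about their convex hulls, so you cannot conclude that the sets $V(C_l)$ are pairwise disjoint. The cross-component non-adjacency you need is better obtained directly from flagness and purity: if $u,w\in\Lk(\sigma)$ are adjacent, then $\sigma\cup\{u,w\}$ is a simplex, so $\{u,w\}$ is an edge of the pure complex $\Lk(\sigma)$ and hence lies in some facet, which belongs to exactly one strongly connected component; thus adjacent link vertices always share a component. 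This still leaves the small task of choosing the six vertices so that none of them lies in two of the three components; in the codimension-two case the components are vertex-disjoint cycles (every link vertex has degree exactly two), so there is nothing to check there, and that is the case driving the paper's main application in \Cref{octa}. The paper's own proof is equally silent on this point, so your proposal is no less complete than the original argument, but the lemma you cite is the wrong tool for the job.
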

\begin{proof}
On contrary, suppose there are more than two strongly connected components in the link of some $k$-simplex, say $\sigma$, where $0\leq k \leq n-2$ . We have that the link of the simplex  $\sigma$ is again a weak-pseudomanifold of dimension $n-k-1$. Since each strong component is a pseudomanifold and admits a planar-Rips structure, choose two non-adjacent vertices from each of them, hence forming an induced subgraph $K_{1,6}$ . This is not possible owing to Lemma \ref{k16}.
\end{proof}
\begin{corollary}
   For $X\subset \R^2$, if $\RR(X)$ is an $n$-dimensional weak-pseudomanifold ($n\geq 2$), then the link of any $(n-2)$ simplex can have at most 11 vertices. Further, it is either a cycle or a disjoint union of exactly two cycles.
\end{corollary}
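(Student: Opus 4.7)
The plan is to handle the two assertions of the corollary separately, starting with the easier structural statement and then attacking the vertex count.

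First I would observe that the link of an $(n-2)$-simplex $\sigma$ in an $n$-dimensional weak-pseudomanifold $\RR(X)$ is itself a $1$-dimensional weak-pseudomanifold: because every $(n-1)$-simplex of $\RR(X)$ containing $\sigma$ corresponds to a $0$-face of $\Lk(\sigma)$, and every $n$-simplex containing $\sigma$ corresponds to a $1$-face, the defining condition (each $(n-1)$-simplex is in exactly two $n$-simplices) descends to ``each vertex of $\Lk(\sigma)$ is in exactly two edges of $\Lk(\sigma)$.'' A $1$-dimensional weak-pseudomanifold is a disjoint union of cycles, and in dimension $1$ the connected components coincide with the strongly connected components. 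Applying \Cref{cycles} to $\sigma$ (which has codimension exactly $2$) bounds the number of such components by two, so $\Lk(\sigma)$ is either a cycle or a disjoint union of two cycles.

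For the numerical bound I would invoke \Cref{k16}: the $1$-skeleton of $\RR(X)$ admits no induced copy of $K_{1,6}$. Since every vertex $v$ of $\sigma$ is adjacent to every vertex of $\Lk(\sigma)$, any independent set $S$ inside $\Lk(\sigma)$ (regarded as a subgraph of the $1$-skeleton) of size at least $6$ would, together with $v$, produce an induced $K_{1,6}$, a contradiction. Hence the independence number of the link graph is at most $5$. For a single cycle $C_m$ this gives $\lfloor m/2 \rfloor \leq 5$, i.e.\ $m \leq 11$. For a disjoint union $C_{m_1} \sqcup C_{m_2}$ we get $\lfloor m_1/2 \rfloor + \lfloor m_2/2 \rfloor \leq 5$.

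The main obstacle lies in closing the last gap: the bound $\lfloor m_1/2 \rfloor + \lfloor m_2/2 \rfloor \leq 5$ by itself still permits totals $m_1+m_2 = 12$ (for instance $(3,9)$ or $(5,7)$), so a purely combinatorial $K_{1,6}$ argument does not immediately yield $11$ in the two-component case. To eliminate these borderline configurations I would bring in the planar geometry via \Cref{hull}: the convex hulls of the two component cycles must be disjoint inside $\Sc(X)$ while simultaneously lying in the intersection of the unit disks centred at the vertices of $\sigma$, and the Rips-cycle condition forces the larger cycle to have diameter close to $1$. Combining these constraints with \Cref{cone} (to rule out a small triangle fitting alongside a long cycle in the common lens region) should exclude the residual odd-odd cases and bring the total down to $11$. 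This geometric tightening is the step where I expect the real work to lie; the rest is bookkeeping.
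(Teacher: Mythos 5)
The paper states this corollary without any proof, so there is no official argument to compare against; your derivation reconstructs the only plausible intended route. The structural part is correct and complete: the link of an $(n-2)$-simplex is a one-dimensional weak-pseudomanifold, hence a disjoint union of cycles, \Cref{cycles} caps the number of components at two, and your observation that two link vertices are adjacent in the $1$-skeleton if and only if they are joined in $\Lk(\sigma)$ (flagness) is exactly what justifies both "components coincide with strongly connected components" and the additivity of independent sets across components. The single-cycle bound $\lfloor m/2\rfloor\le 5\Rightarrow m\le 11$ via \Cref{k16} is also correct and is evidently where the number $11$ comes from.

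The gap you flag in the two-component case is genuine, and your proposal does not close it. One of your two borderline cases is actually free: a $3$-cycle in $\Lk(\sigma)$ would span a $2$-simplex of the link (the complex is flag), producing an $(n+1)$-simplex in $\RR(X)$, so every component has length at least $4$ and only $(5,7)$ survives with $12$ vertices and independence number exactly $5$. But for $(5,7)$ your plan --- combining \Cref{hull}, \Cref{cone}, and a vague "the larger cycle has diameter close to $1$" --- is an unexecuted sketch: no contradiction is exhibited, and it is not clear how disjointness of the two convex hulls inside the common lens $\bigcap_{u\in\sigma}B(u,1)$ forces a sixth independent neighbour of a vertex of $\sigma$. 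As written, your argument establishes "at most $11$ vertices if the link is connected, at most $12$ if it has two components," plus the dichotomy. Since the paper gives no proof, the same lacuna is latent in the original text; the numerical bound is never used later, and the sharp statement (each component is a $4$-cycle, hence at most $8$ vertices) only becomes available after \Cref{pseudorips} and \Cref{weakiswedge}. You should either carry out the geometric exclusion of $(5,7)$ in detail or weaken the claimed bound in the disconnected case.
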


The following lemma from \cite{pseudo} provides a general representation of all two-dimensional weak-pseudomanifolds, specifically for two-dimensional pseudomanifolds by restricting to the strongly connected components.

\begin{lemma}[Classification of two–dimensional weak-pseudomanifolds] \label{pseudo}  Any compact two-dimensional weak-pseudomanifold $X$ is homeomorphic to the quotient space of a disjoint union of compact 2–manifolds without boundary such that if $q$ is the quotient map, then $\#\left\{q^{-1}(x)\right\} < \infty$ for all points $x \in X$, and $\#\left\{q^{-1}(x)\right\} = 1$ for all but finitely many $x \in X$.
\end{lemma}
 Thus, a pseudomanifold $X$ is the quotient space of a compact 2-manifold without boundary by taking a self-identification of finite sets of points. With the aim of classifying all planar-Rips complexes having a weak-pseudomanifold structure, we first extend Theorem \ref{normal} for a pseudomanifold. We begin by proving the two-dimensional case.

\begin{theorem}\label{octa}
For any $X \subset \R^2$, the planar-Rips complex on $X$ having a two-dimensional pseudomanifold structure is isomorphic to the boundary complex of octahedron.
\end{theorem}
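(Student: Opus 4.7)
The plan is to reduce to \Cref{normal} (the normal pseudomanifold case) by proving that $\RR(X)$ is normal, i.e., the link of every vertex is connected. By \Cref{pseudo}, $\RR(X)$ as a 2-dimensional strongly connected weak-pseudomanifold is the quotient of a single closed 2-manifold by finitely many point-identifications, so normality amounts to showing no vertex is a pinch point.

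Assume, for contradiction, that $v$ is a pinch vertex of $\RR(X)$. By the corollary to \Cref{cycles}, $\Lk(v) = C_1 \sqcup C_2$ is a disjoint union of two cycles with $|C_1|+|C_2| \leq 11$. A first observation is that each $|C_i| \geq 4$: if some $C_i$ were a triangle, its three vertices would be pairwise adjacent and all adjacent to $v$, yielding a pairwise-adjacent $4$-set whose span is a $3$-simplex, contradicting $\dim \RR(X) = 2$. So each $C_i$ is an induced cycle of length at least $4$, and consequently non-consecutive vertices within a cycle have distance at least $1$, while vertices from different cycles have distance at least $1$ as well (being non-adjacent in $\RR(X)$). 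All these vertices lie in the closed unit disk $D(p_v,1)$ about $p_v$, and by \Cref{hull}, $\conv(V(C_1))$ and $\conv(V(C_2))$ are disjoint in the shadow.

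The contradiction is obtained by case analysis on the position of $p_v$. When $p_v \in \conv(V(C_1))$ (the case $p_v \in \conv(V(C_2))$ being symmetric), a covering/diameter argument using the induced-cycle structure of $C_1$ shows that the complement $D(p_v,1) \setminus \bigcup_{a \in V(C_1)} D(a,1)$ is either empty or has connected components each of diameter less than $1$; hence it cannot accommodate four or more vertices of $C_2$ that must themselves form an induced cycle of length at least $4$, contradicting the existence of $C_2$. When $p_v$ lies outside both convex hulls, for every $b \in V(C_2)$ the segment $\overline{p_v b}$ must cross some edge $\overline{a a'}$ of $C_1$ in the shadow; \Cref{cone} applied to such a crossing, using the non-adjacency of $b$ to vertices of $C_1$, then forces a prohibited adjacency between $C_1$ and $C_2$.

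The main obstacle is the covering/diameter estimate in the first case, which requires carefully combining the induced-cycle constraints on $C_1$ (sides at most $1$, diagonals at least $1$) with the condition $p_v \in \conv(V(C_1))$ to rule out the possibility of a second induced cycle $C_2$ of length at least $4$ sitting in $D(p_v,1)$ at the required mutual distances. Once pinch vertices are excluded, $\RR(X)$ is a normal pseudomanifold and \Cref{normal} identifies it as the boundary of the $3$-dimensional cross-polytope, i.e., the octahedron.
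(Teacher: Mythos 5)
Your high-level plan --- show that no vertex of $\RR(X)$ is singular and then invoke \Cref{normal} --- is exactly the paper's strategy, but the mechanism you propose for excluding pinch vertices is local and geometric, whereas the paper's is global and topological, and your local argument has two genuine gaps. The first is the one you flag yourself: the ``covering/diameter estimate'' in the case $p_v \in \conv(V(C_1))$ is the entire content of that case and is not proved. Showing that $D(p_v,1)\setminus\bigcup_{a\in V(C_1)}D(a,1)$ has components too small to host a second induced cycle of length at least $4$ requires real work with the constraints (consecutive distances $<1$, non-consecutive distances $\geq 1$, all vertices within distance $<1$ of $p_v$), and as written the proof simply asserts it. A proof that names its key estimate as ``the main obstacle'' has not closed that case.

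The second gap is worse: in the case where $p_v$ lies outside both convex hulls, your claim that every segment $\overline{p_v b}$ with $b\in V(C_2)$ must cross an edge of $C_1$ has no justification and is false in general. Nothing in the hypotheses forces $C_1$ to separate $p_v$ from $C_2$ in the plane; the two cycles could occupy disjoint angular sectors as seen from $p_v$ (e.g.\ $C_1$ entirely to one side of $p_v$ and $C_2$ to the other), in which case no such crossing occurs and \Cref{cone} gives you nothing. So this case is not merely incomplete but rests on a false step. The paper avoids all of this pointwise geometry: it deletes the set $\tilde{X}$ of all $m$ singular vertices at once, notes that $\RR(X-\tilde{X})$ is homotopy equivalent to a sphere with $2m$ punctures (a wedge of $2m-1$ circles), uses \Cref{hull} to see that the $2m$ link-cycles have pairwise disjoint convex hulls so that the shadow $\Sc(X-\tilde{X})$ is a wedge of $2m$ circles, and then contradicts the $\pi_1$-isomorphism of \Cref{free} unless $m=0$. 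If you want to salvage a local argument you would need, at minimum, a correct treatment of the ``outside both hulls'' configuration; otherwise the global counting argument is the intended route.
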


\begin{proof}

\begin{figure}[h!]
  \begin{subfigure}[b]{0.49\textwidth}
    \centering \includegraphics[scale=0.225]{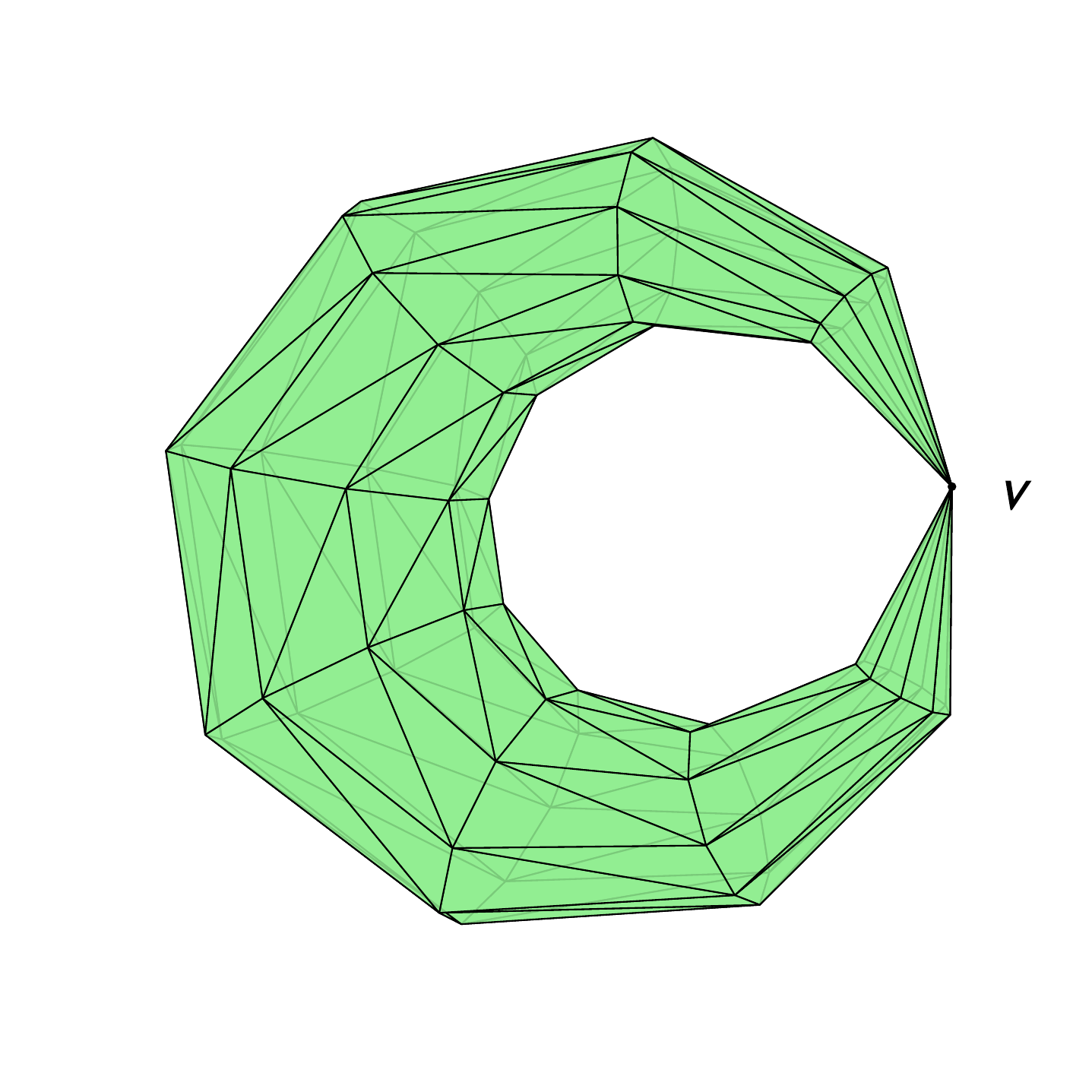}
    \caption{}
  \end{subfigure}
  \begin{subfigure}[b]{0.49\textwidth}
    \centering 
    \includegraphics[scale=0.22]{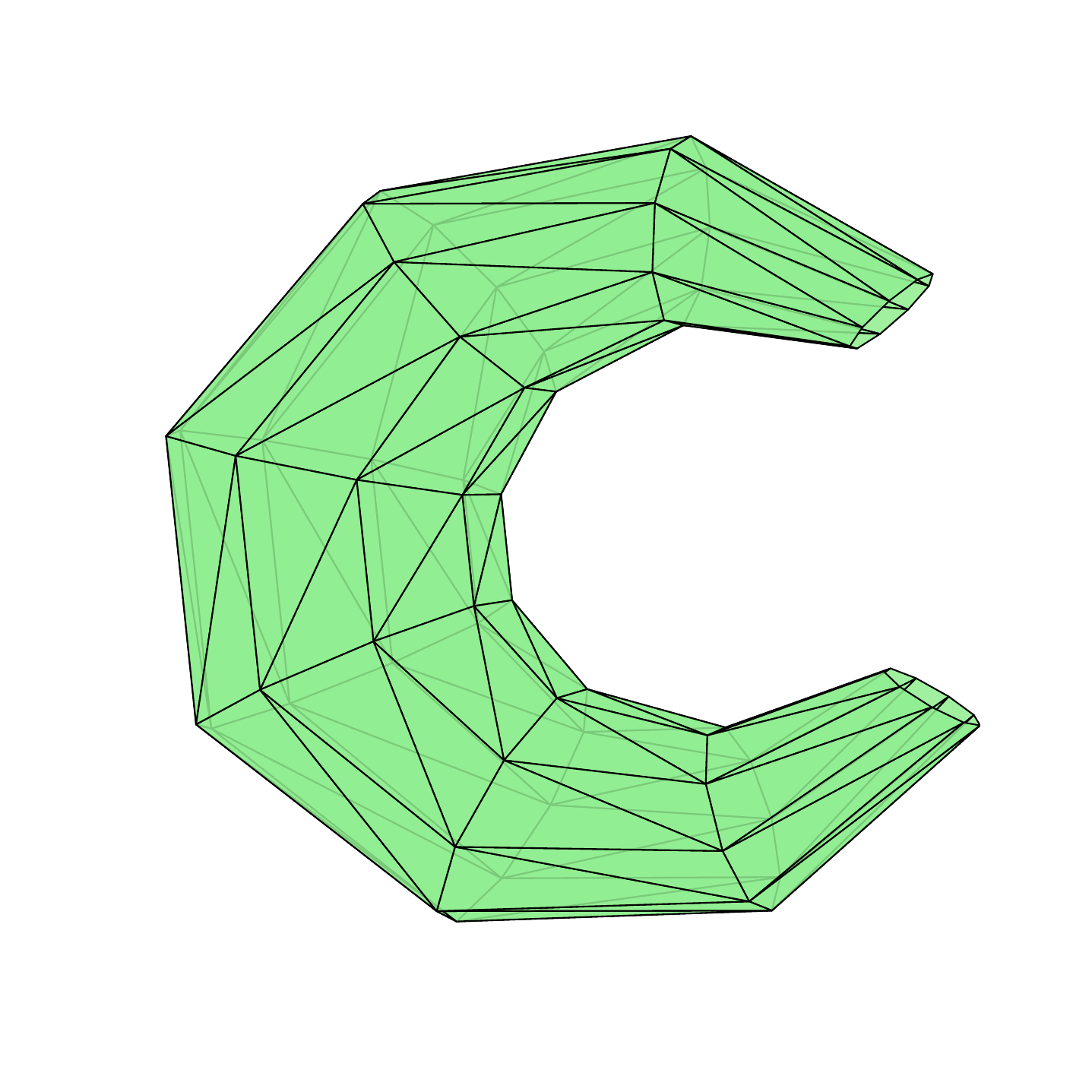}
    \caption{}
  \end{subfigure}
  \caption{ (On the left) A sphere with self-identification of two points($m=1$) is homeomorphic to pinched torus. The link of its singular vertex $\mathrm{v}$ has two connected components. (On the right) We have induced subcomplex of pinched torus after deleting the singular vertex. Both the structures do not admit planar-Rips structure.} \label{pinchedtorus}
\end{figure}
By Theorem \ref{free}, the fundamental group of $\RR(X)$ is free. Hence, by the structure theroem for (weak) pseudomanifolds (Lemma \ref{pseudo}), the pseudomanifold $\RR(X)$ is homeomorphic to the quotient space of sphere $\s^2$ by taking a self-identification of finite sets of points; let $q\colon\s^2 \to \RR(X)$ be the associated quotient map. 

Let $\tilde{X}$ be the set of \textit{singular vertices} (vertices whose links are not connected) in $\RR(X)$.  For any vertex $v \in X$, since $\Lk(v)$ has at most two disjoint cycles (by Proposition \ref{cycles}), the set $q^{-1}(v)$ can have at most two points, and $q^{-1}(u) \cap q^{-1}(v) = \emptyset $ for $u \neq v$; hence, $|q^{-1}(\tilde{X})|=2|\tilde{X}|$. Now, consider the planar-Rips complex $\RR(X-\tilde{X})$ which is homotopy equivalent to a sphere with $2m$ points deleted, where $|\tilde{X}|=m$. Each removed point $v_i$ from the sphere corresponds to a cycle $\mathcal{C}_i$ in the link of the singular vertex $q(v_i)$ in $\RR(X)$. Figure \ref{pinchedtorus} shows an illustration for $m=1$ case. Further, the cycle $[p(\mathcal{C}_i)]$ is non-trivial in $\pi_1(\Sc(X-\tilde{X}))$ since the projection map $p\colon\RR(X-\tilde{X}) \to \Sc(X-\tilde{X})$ induces a $\pi_1$-isomorphism (Theorem \ref{free}). 

From Proposition \ref{hull}, the convex hulls of any two of the $2m$ cycles in $\RR(X-\tilde{X})$ cannot intersect; hence the shadow $\Sc(X-\tilde{X})$ is homotopy equivalent to a wedge of $2m$ copies of circle $\s^1$. This is possible only if $m=0$ since for $m\geq 1$, the Rips complex $\RR(X-\tilde{X})$ is homotopy equivalent to a wedge of $(2m-1)$ copies of $\s^1$. Therefore, the link of every vertex is connected and the conclusion follows by the virtue of Theorem \ref{normal}.
    
\end{proof}

 Now to extend the result of the above theorem to $n$-dimensional pseudomanifold, we use induction. In order to do so, we first introduce certain prerequisite proximity relations for points in the Euclidean plane.

 \begin{definition}[Antipodal point] Given the boundary complex of $(n+1)$-dimensional cross-polytope, say $\O_n$, the antipodal point of any vertex $v \in \O_n$ is the vertex in $\O_n$ not adjacent to $v$ and is denoted by $\bar{v}$.
 \end{definition}

\begin{lemma} \label{polygon}
    For $X \subset \R^2$ with $(2n+2)$ points($n\geq 1)$, suppose the planar-Rips complex $\RR(X)$ is isomorphic to $\O_{n}$. Then any $x \in X$ is not in the convex hull of the remaining vertices i.e., the shadow $\Sc(X)$ is a simple convex polygon with $2n$ sides.
\end{lemma}
\begin{proof}
    Let if possible, there exists $x \in X$ such that $x \in conv(X-x)$. Then it follows by Caratheodory's theorem that there exists vertices $a,b,c \in (X-x)$ such that $x \in \conv\left\{a,b,c \right\}$. Now there are two cases to consider.
    \begin{enumerate}[label=Case \arabic*:, leftmargin=1.5cm]
        \item[Case 1.] Suppose one of the points $a,b,c$ is an antipodal point of $x$; without loss of generality, assume $a$ to be the antipodal point $\bar{x}$. Since $d(\bar{x},b) < 1$ and $d(\bar{x},c)<1$ in $\RR(X)$, the distance between $\bar{x}$ and any point in $\triangle \bar{x}bc$ is less than 1 i.e., $d(\bar{x},x) < 1$ which is absurd.

        \item[Case 2.] Suppose none of the points $a,b,c$ is the antipodal point $\bar{x}$. Then from Lemma \ref{triangle}, we have $x \in \Lk(\tilde{x})$ i.e., $d(\bar{x},x) <1$ which is again absurd.
\end{enumerate}
\end{proof}

\begin{lemma} \cite[Lemma 6.1]{planar2} \label{guard}
    Let $\mathcal{S}$ be a simple polygon in $\R^2$ and $v$ be a point outside or on $\mathcal{S}$. Suppose further that for every vertex $P$ of $\mathcal{S}$, all the intersections of the ray $\overrightarrow{vP}$ with $\mathcal{S}$ lie between $v$ and $P$. Then there is an edge $AB$ of $\mathcal{S}$ such that for any vertex $P$ of $\mathcal{S}$ the segments $vP$ and $AB$ have non-empty intersection.
\end{lemma}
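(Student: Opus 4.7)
The idea is to identify the desired edge explicitly as the segment joining the two angularly extreme vertices of $\mathcal{S}$ as seen from $v$. Place $v$ at the origin, and for each vertex $P$ of $\mathcal{S}$ let $\theta(P)$ denote the polar angle of $P - v$. Let $V_L$ and $V_R$ be vertices realising $\theta_L := \min_P \theta(P)$ and $\theta_R := \max_P \theta(P)$ respectively. The plan is to verify three things: (a) every vertex $P$ of $\mathcal{S}$ satisfies $\theta(P) \in [\theta_L, \theta_R]$, (b) the segment $V_L V_R$ is an edge of $\mathcal{S}$, and (c) taking $AB := V_L V_R$, the segment $vP$ meets $AB$ for every vertex $P$.

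Tasks (a) and (c) are relatively routine. Task (a) is immediate from the extremal definition of $V_L, V_R$. For task (c): if $P = A$ or $P = B$, then $P$ itself lies in $vP \cap AB$. Otherwise $\theta_A < \theta(P) < \theta_B$, so the ray $\overrightarrow{vP}$ lies strictly inside the wedge $\angle AvB$ and must cross the segment $AB$ at a unique point $Q$. Since $Q \in AB \subset \mathcal{S}$, the hypothesis applied to the vertex $P$ forces $Q$ to lie on the segment $vP$, giving $Q \in vP \cap AB$.

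The substantive work is task (b). I would argue by contradiction: suppose $V_L V_R$ is not an edge. Then both polygon arcs joining $V_L$ to $V_R$ contain intermediate vertices. Let $\ell$ be the line through $V_L$ and $V_R$; it splits $\mathbb{R}^2$ into an open ``near'' half-plane containing $v$ and an open ``far'' half-plane. The first goal is to locate an intermediate vertex $M$ in the near half-plane. Such an $M$ should exist by a turning argument: both polygon edges incident to $V_L$ (and to $V_R$) emanate into the strict angular sector $(\theta_L, \theta_R)$, so at least one of the two polygon arcs must begin by dipping into the near half-plane before eventually crossing to the far side. Once $M$ is located, the ray $\overrightarrow{vM}$ (at angle $\theta(M) \in (\theta_L, \theta_R)$) starts in the near half-plane, passes through $M$ (also near), and then continues into the far half-plane. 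On the ``far'' polygon sub-arc of $\mathcal{S}$ from $V_L$ to $V_R$ lying in the far half-plane, the polar angle from $v$ continuously sweeps from $\theta_L$ to $\theta_R$, so the intermediate value theorem yields a point $Q' \in \mathcal{S}$ at angle exactly $\theta(M)$ in the far half-plane. This $Q'$ lies on the ray $\overrightarrow{vM}$ strictly beyond $M$, contradicting the hypothesis for $M$.

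\textbf{Main obstacle.} The crux is task (b): rigorously producing an intermediate vertex $M$ on the same side of $\ell$ as $v$ and showing that the far arc of $\mathcal{S}$ truly sweeps all intermediate angles. This is delicate for non-convex polygons, where both arcs from $V_L$ to $V_R$ can weave between the two half-planes and the notion of ``far arc'' becomes ambiguous. Degenerate configurations, such as $v$ on $\mathcal{S}$, multiple vertices sharing a polar angle, or vertices lying on $\ell$, likewise require separate care. The cleanest rigorous approach I foresee is to analyse the polar-angle function $\phi$ along $\partial \mathcal{S}$: $\phi$ is piecewise strictly monotonic between vertices, has total signed change $0$ around the closed curve, and the hypothesis constrains the pattern of its local extrema so tightly that $V_L V_R$ must be the unique ``closing'' edge.
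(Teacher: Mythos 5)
The paper does not actually prove this lemma; it is quoted verbatim from \cite[Lemma 6.1]{planar2}, so there is no in-paper proof to compare against and your argument has to stand on its own. As it stands it does not. You have correctly identified the candidate edge (indeed one can check that if \emph{any} edge $AB$ satisfies the conclusion then, away from degenerate configurations, it must be $V_LV_R$: the segment $AB$ lies in the closed angular wedge spanned by $\mathcal{S}$ and yet must meet both extreme segments $vV_L$ and $vV_R$, which forces its endpoints onto the two extreme rays), and steps (a) and (c) are fine once the wedge $\angle V_L v V_R$ is known to open by at most $\pi$. But the two load-bearing steps are missing. First, the polar angle $\theta$ is only well defined, and the wedge-crossing argument in (c) only valid, if the directions from $v$ to $\mathcal{S}$ lie in a closed half-plane; this amounts to showing $v \notin \mathrm{int}(\conv(\mathcal{S}))$, which is a consequence of the hypothesis but one you never derive ($v$ merely being outside or on a simple polygon does not give it: $v$ can sit in a pocket of a non-convex polygon and be angularly surrounded, in which case $\theta_L$ and $\theta_R$ do not exist). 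Second, and as you yourself flag, step (b) is only a sketch: the existence of an intermediate vertex $M$ strictly on the near side of the line through $V_L V_R$ is asserted via a turning argument that ``should exist,'' and the claim that the far sub-arc sweeps every intermediate angle (the input to your intermediate value step) is exactly what fails to be obvious for non-convex polygons, where both boundary arcs can cross the line $V_LV_R$ repeatedly and the intermediate vertices may all lie in the closed far half-plane or on the line itself. Since you concede the argument is ``delicate'' and ``ambiguous'' precisely at this point and only describe the approach you ``foresee,'' the crux of the lemma is left unproved.

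For what it is worth, in the only place the present paper invokes the lemma (Claim 2 in the proof of \Cref{pseudorips}) the polygon $\Sc(\widetilde{\O})$ is convex and $v$ is strictly outside it; in that special case your argument does close up, because the chain of $\partial\mathcal{S}$ visible from $v$ is an arc of consecutive edges and the hypothesis forbids it from containing any vertex in its relative interior (the ray beyond such a vertex would immediately re-enter $\mathcal{S}$), so the visible chain is the single edge $V_LV_R$. But the lemma is stated for arbitrary simple polygons, and a complete proof must handle the non-convex case, e.g.\ by actually carrying out the monotonicity analysis of the angle function along $\partial\mathcal{S}$ that you gesture at in your closing remark.
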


We refer to such a point $v$ as \textit{guard point}. The motivation to use this terminology stems from the Art Gallery problem (See \cite{art}), a prominent concept in Computational Geometry. In the art gallery problem \cite[p. 8]{artgallery}, a guard point (or sometimes called point guard) refers to a point on or inside a polygon from which the entire interior of the polygon can be observed. It is a location from where a guard can have an unobstructed line of sight to every point inside the gallery. In our case, the guard points are considered to be lying strictly outside the polygon, and the guard points are needed to have every line of sight passing through an edge of the polygon. Furthermore, from Lemma \ref{guard}, it is sufficient for the guard point to have a clear line of sight to only the vertices of the simple polygon.
\begin{definition}[Guard point of a simple polygon along a side] Given a simple polygon $\mathcal{S}$ having an edge $AB$, a point $v$ outside $\mathcal{S}$ is called a $\textbf{guard point}$ of $\mathcal{S}$ alongside $AB$ if for any vertex $P$ of $\mathcal{S}$, the segments $vP$ and $AB$ intersect.
\end{definition}

A fundamental problem in computational geometry is determining the position of a point with respect to a line segment. This involves finding on which side of the line does the point lie, and can be used as a building block to solve more complex problems like line segment intersection and finding the convex hull of a set of points. In summary, these ideas and methods in combinatorial topology for the Euclidean plane case play a fundamental role for effectively analyzing the geometric structures and their properties.

\subsection{Position of a point relative to a line.}
To determine on which side of a given line a point lies, we can use the cross-product of two vectors. Given points $A=(x_1,y_1)$ and $B=(x_2,y_2)$, let $L_{AB}$ represent the line joining them. Let $P=(x,y)$ be a point whose position relative to $L_{AB}$ needs to be determined. Consider the two vectors $\vec{AB}=(x_2-x_1,y_2-y_1)$ and $\vec{AP}=(x-x_1,y-y_1)$, and take their cross-product, $r=\vec{AB}\times\vec{AP}=(x-x_1)(y_2-y_1)-(y-y_1)(x_2-x_1)$. The sign of $r$ indicates the point $P$'s position with respect to line $L_{AB}$: 

\begin{enumerate}[label=(\roman*)]
\item if $r> 0$, then the point is to the left of the line $L_{AB}$; Denote it by $P \in L^+_{AB}$.
\item if $r< 0$, then the point is to the right of the line $L_{AB}$; Denote it by $P \in L^-_{AB}$.
\item if $r= 0$, then the point is on the line $L_{AB}$; Denote it by $P \in L_{AB}$.
\end{enumerate}
The union of line $L_{AB}$ and the region $L^+_{AB}$ will be denoted by $\mathrm{cl}(L^+_{AB})$, which is same as taking the closure of $L^+_{AB}$ in usual topological sense. Note that the order of the points $A$ and $B$ is significant while determining the position of point $P$ relative to the line $L_{AB}$. The orientation of the line is taken to be in the direction from $A$ to $B$. Moreover, it can be easily checked that $L^+_{AB}=L^-_{BA}$.

Additionally, by determining on which side of a line a point lies, we can efficiently test whether a given point lies within the convex hull of three points in the Euclidean plane. For instance, given three non-collinear points $A,B,C$, the region $\mathrm{cl}(L^-_{AB}) \cap \mathrm{cl}(L^-_{BC}) \cap \mathrm{cl}(L^-_{CA}) $ or $\mathrm{cl}(L^+_{AB}) \cap \mathrm{cl}(L^+_{BC}) \cap \mathrm{cl}(L^+_{CA})$ corresponds to the $\triangle ABC$ depending on whether the points $A,B,C$ are in clockwise ($C \in L^-_{AB}$) or counterclockwise ($C \in L^+_{AB})$ order respectively.

For example, the next lemma which we shall use in our subsequent results, is an immediate consequence of the above approach and can be easily verified.

\begin{lemma} \label{chull}
  Let $A, B, C$ be three non-collinear points inside $\R^2$. Consider the set
  \[\mathcal{H}=\left\{X \in \R^2 \mid C \in \conv\left\{A,B,X\right\}\right\}\]
  Then
  \begin{enumerate}
      \item $\mathcal{H}=\mathrm{cl}(L^-_{AC}) \cap \mathrm{cl}(L^+_{BC})$, when $A,B,C$ are in clockwise order 
      \item $\mathcal{H}=\mathrm{cl}(L^+_{AC}) \cap \mathrm{cl}(L^-_{BC})$, when $A,B,C$ are in counterclockwise order 
  \end{enumerate}
\end{lemma}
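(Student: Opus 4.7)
The plan is to recast $\mathcal{H}$ as a parametric description of a closed convex cone with apex at $C$, and then read off which pair of closed half-planes bounded by $L_{AC}$ and $L_{BC}$ this cone occupies, using the orientation hypothesis only at the very last step.

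First I would note that since $A,B,C$ are non-collinear, $C$ does not lie on the segment $\overline{AB}$, so in any convex combination $C = \lambda_1 A + \lambda_2 B + \lambda_3 X$ with $\lambda_i \geq 0$ and $\lambda_1+\lambda_2+\lambda_3 = 1$, the coefficient $\lambda_3$ must be strictly positive. Solving for $X$ and reparametrizing via $\alpha = \lambda_1/\lambda_3 \geq 0$ and $\beta = \lambda_2/\lambda_3 \geq 0$ yields
\[
\mathcal{H} \;=\; \bigl\{\, C + \alpha(C-A) + \beta(C-B) \,:\, \alpha,\beta \geq 0\,\bigr\},
\]
which is the closed cone at apex $C$ spanned by the rays starting at $C$ in the directions $C-A$ and $C-B$.

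Next, since $\{C + \alpha(C-A):\alpha \geq 0\}$ lies on the line $L_{AC}$ and $\{C + \beta(C-B):\beta\geq 0\}$ lies on $L_{BC}$, the cone $\mathcal{H}$ is the intersection of two closed half-planes, one bounded by each line. To identify which half-plane is the correct one along $L_{AC}$, I would use the elementary fact that the direction $C - B$ has a component perpendicular to $L_{AC}$ pointing from $B$'s side of $L_{AC}$ across to the opposite side. Thus $\mathcal{H}$ lies on the side of $L_{AC}$ \emph{opposite} to $B$, and symmetrically on the side of $L_{BC}$ opposite to $A$.

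The orientation hypothesis enters only here. For $A,B,C$ in counterclockwise order one has $\vec{AB}\times\vec{AC}>0$, so by antisymmetry $\vec{AC}\times\vec{AB}<0$, giving $B \in L^-_{AC}$; and the easily verified identity $\vec{BC}\times\vec{BA} = \vec{AB}\times\vec{AC}$ simultaneously gives $A \in L^+_{BC}$. Combining with the previous paragraph yields $\mathcal{H}=cl(L^+_{AC})\cap cl(L^-_{BC})$, which is (2); the clockwise case (1) is obtained by reversing all signs. The only real obstacle is careful sign-bookkeeping with the oriented lines $L_{AC}$ versus $L_{CA}$, for which the identity $\vec{BC}\times\vec{BA} = \vec{AB}\times\vec{AC}$ is the key bookkeeping tool.
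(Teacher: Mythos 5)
Your argument is correct and complete. The paper itself does not supply a proof of this lemma: it is presented as an ``immediate consequence'' of the preceding cross-product sign test and the observation that a triangle is an intersection of three closed half-planes, and the verification is left to the reader. Your parametric route is the natural way to make that verification rigorous, and it does so cleanly: rewriting $C=\lambda_1A+\lambda_2B+\lambda_3X$ with $\lambda_3>0$ (which indeed follows from non-collinearity, since $\lambda_3=0$ would put $C$ on $\overline{AB}$) and dividing by $\lambda_3$ correctly gives $\mathcal{H}=\left\{C+\alpha(C-A)+\beta(C-B)\;:\;\alpha,\beta\geq 0\right\}$, and since $C-A$ and $C-B$ are linearly independent this closed cone is exactly the intersection of the two closed half-planes you describe, each lying opposite to the third vertex. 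The final sign bookkeeping is also right: with the paper's convention that counterclockwise means $\vec{AB}\times\vec{AC}>0$, antisymmetry gives $B\in L^-_{AC}$ and the identity $\vec{BC}\times\vec{BA}=\vec{AB}\times\vec{AC}$ gives $A\in L^+_{BC}$, so the cone sits in $cl(L^+_{AC})\cap cl(L^-_{BC})$, matching case (2), with case (1) following by flipping every sign. Compared with what the paper gestures at (a direct check using the half-plane description of $\conv\{A,B,X\}$ for each candidate $X$), your version has the advantage of characterizing $\mathcal{H}$ globally in one step rather than testing membership pointwise, and it makes explicit where the non-collinearity and orientation hypotheses are actually used.
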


   \begin{figure}[h!]
    \centering \includegraphics[scale=0.18]{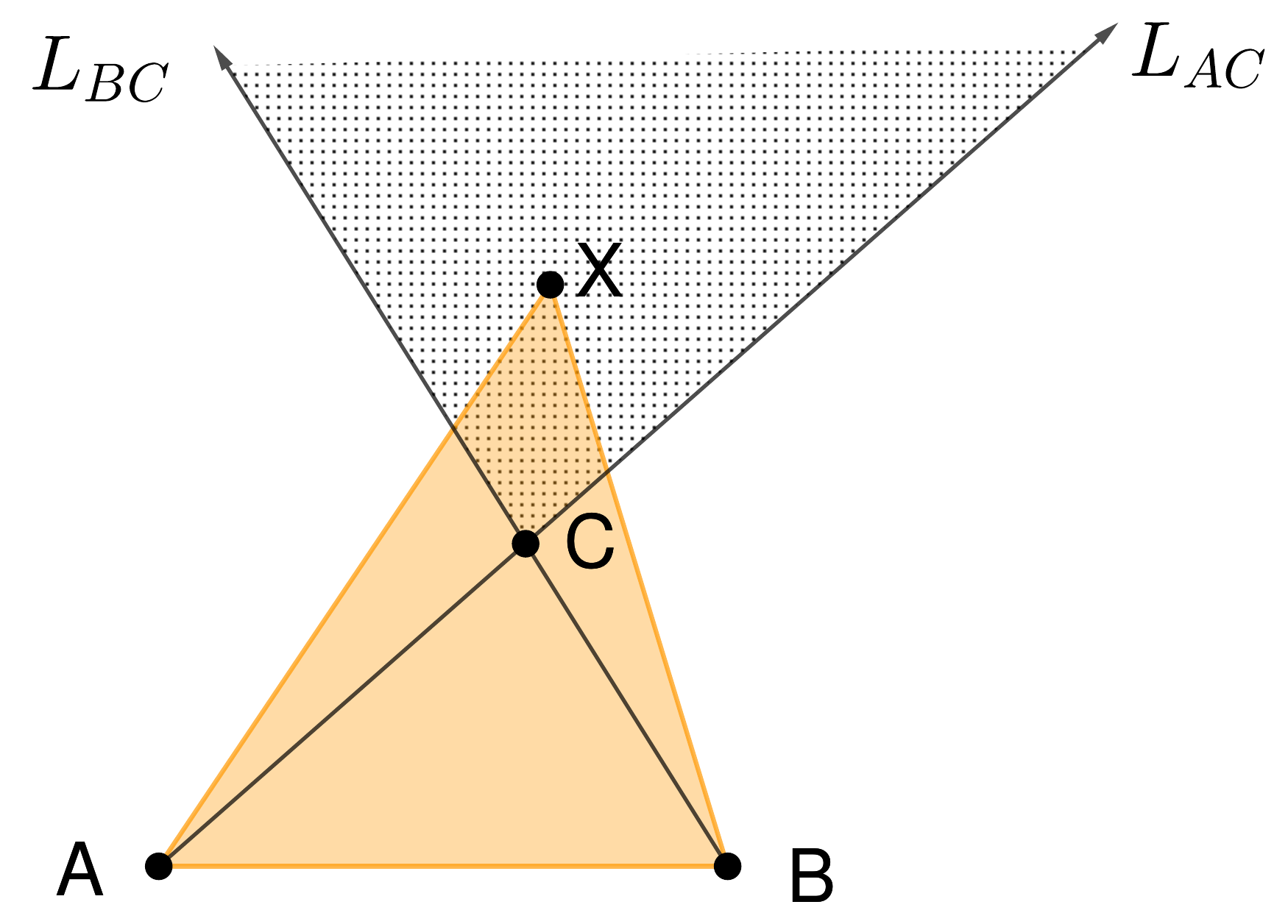}

  \caption{The two cases of Lemma \ref{chull}: The dotted region denotes the set $\mathrm{cl}(L^-_{AC}) \cap \mathrm{cl}(L^+_{BC})$ or $\mathrm{cl}(L^+_{AC}) \cap \mathrm{cl}(L^-_{BC})$, depending on whether points are in clockwise or counterclockwise order respectively.}  
\end{figure}

\subsection{Perpendicular bisector}
Given any two points $A$ and $B$ in $\R^2$, the line of perpendicular bisector of the segment $\overline{AB}$ will be denoted by $L_{A|B}$ and the set of all points whose distance from $A$ is strictly less than distance from $B$ will be denoted by $\mathcal{P}_{A<B}$ i.e., 
 \[\mathcal{P}_{A<B}\coloneq\left\{x \in \R^2 \mid d(A,x) < d(B,x)\right\}\]
Note that the set $\mathcal{P}_{A<B}$ inside $\R^2$ is same as the set of all points that lie on the same side of the line $L_{A|B}$ as the point $A$ does i.e.,  $\mathcal{P}_{A<B}=L^+_{A|B}$. Moreover, it can be easily checked that $L^+_{A|B}=L^-_{B|A}$.

\begin{lemma} \label{convhull}
    Let $A, B , C , D$ be four points in $\R^2$ such that no three points are collinear, the segments $\overline{AB}$ and $\overline{CD}$ intersect, and the distance of $D$ from $C$ is less than the distance from both $A$ and $B$. Suppose $P$ is another point whose distance from $C$ is greater than the distance from both $A$ and $B$, then the point $D$ is in the convex hull of points $A, B, \ \text{and} \ P$ i.e.,  $D \in \conv\left\{ A, B ,P\right\}$.
\end{lemma}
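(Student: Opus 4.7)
The plan is to reduce $D\in\conv\{A,B,P\}$ via \Cref{chull} to two half-plane conditions on $P$, and then verify those using carefully chosen convex combinations of the distance hypotheses. I would set coordinates with $A=(0,0)$ and $B=(b,0)$ for some $b>0$. Since $\overline{AB}$ and $\overline{CD}$ intersect and no three of $A,B,C,D$ are collinear, $C$ and $D$ lie on opposite sides of the $x$-axis, so without loss of generality $D=(d_1,d_2)$ with $d_2<0$ and $C=(c_1,c_2)$ with $c_2>0$. A sign analysis of the hypotheses $d(D,C)<d(D,A)$ and $d(D,C)<d(D,B)$ combined with the intersection condition forces $c_1,d_1\in(0,b)$; in particular $d_1>0$ and $b-d_1>0$, which are crucial below.

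Setting $u=p_1-d_1$ and $v=p_2-d_2$, subtracting the squared-distance relations pairwise gives two linear inequalities
\[
\text{R1}:\ uc_1+vc_2<0,\qquad \text{R2}:\ u(c_1-b)+vc_2<0,
\]
the first from combining $d(D,C)<d(D,A)$ with $d(P,C)>d(P,A)$, and the second from the analogous pair at $B$. Since $d_2<0$ places $D$ below line $AB$, the relevant orientation of \Cref{chull} identifies $\{X:D\in\conv\{A,B,X\}\}$ with $cl(L^+_{AD})\cap cl(L^-_{BD})$, which in these coordinates unpacks as the two goals
\[
\text{G1}:\ ud_2-vd_1>0,\qquad \text{G2}:\ ud_2+v(b-d_1)<0.
\]

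Each of G1, G2 will be derived as a strict convex combination $(1-\tau)\,\text{R1}+\tau\,\text{R2}$ for an appropriate $\tau\in(0,1)$: the value $\tau_1=(D\cdot C)/(d_1 b)$ yields G1, and $\tau_2=[c_1(b-d_1)-c_2 d_2]/[b(b-d_1)]$ yields G2. Positivity of both $\tau_i$ and the upper bound $\tau_1<1$ follow by direct sign inspection using the first $D$-hypothesis in the form $D\cdot C>|C|^2/2>0$ together with $c_1,d_1\in(0,b)$. The main obstacle lies in the bound $\tau_2<1$, which reduces to the non-obvious inequality $(b-d_1)(b-c_1)+c_2 d_2>0$. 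I would establish this by invoking the second $D$-hypothesis $2d_1(b-c_1)-2d_2 c_2<b^2-|C|^2$ to substitute a sharp lower bound for $d_2 c_2$; after simplification the left-hand side collapses to $\tfrac12(b-c_1)^2+\tfrac12 c_2^2$, which is manifestly positive. With both $\tau_i\in(0,1)$ in hand, G1 and G2 hold, and \Cref{chull} concludes $D\in\conv\{A,B,P\}$.
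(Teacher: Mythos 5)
Your proposal is correct, and it reaches the conclusion by the same overall route as the paper: both arguments convert the distance hypotheses into half-plane conditions and then invoke \Cref{chull} to certify $D\in\conv\{A,B,P\}$. The difference lies in how the key implication is established. The paper keeps the conditions on $D$ and $P$ separate --- $D\in L^-_{A|C}\cap L^+_{C|B}$ and $P\in L^+_{A|C}\cap L^-_{C|B}$ --- and then asserts, with a figure and the phrase ``it can be easily checked,'' the containment $L^+_{A|C}\cap L^-_{C|B}\subset L^+_{AD}\cap L^-_{BD}$. You instead subtract the squared-distance relations pairwise so that everything becomes a pair of linear inequalities in $P-D$, and you prove the needed implication by exhibiting the two target half-plane conditions as explicit positive multiples of convex combinations of your R1 and R2, with coefficients $\tau_1,\tau_2$ shown to lie in $(0,1)$. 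I checked the details: the preliminary claim $c_1,d_1\in(0,b)$ does follow from the sign analysis you indicate (the hypothesis $2D\cdot C>|C|^2$ gives $d_1c_1>-d_2c_2>0$, so $c_1$ and $d_1$ share a sign, and the crossing of the open segment $\overline{AB}$ pins that sign down; similarly at $B$); your R1, R2, G1, G2 match the paper's sign conventions for $L^{\pm}$; and the reduction of $\tau_2<1$ to $(b-d_1)(b-c_1)+c_2d_2>0$, settled via the second $D$-hypothesis and the identity collapsing to $\tfrac12(b-c_1)^2+\tfrac12 c_2^2$, is exactly the inner-product inequality $(B-D)\cdot(B-C)>\tfrac12|B-C|^2$. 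What your version buys is a complete, mechanically checkable proof of the one step the paper leaves to the reader; what the paper's version buys is brevity and a geometric picture of the admissible region for $P$.
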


\begin{proof}
\begin{figure}[h!]
    \centering 
    \includegraphics[scale=0.2]{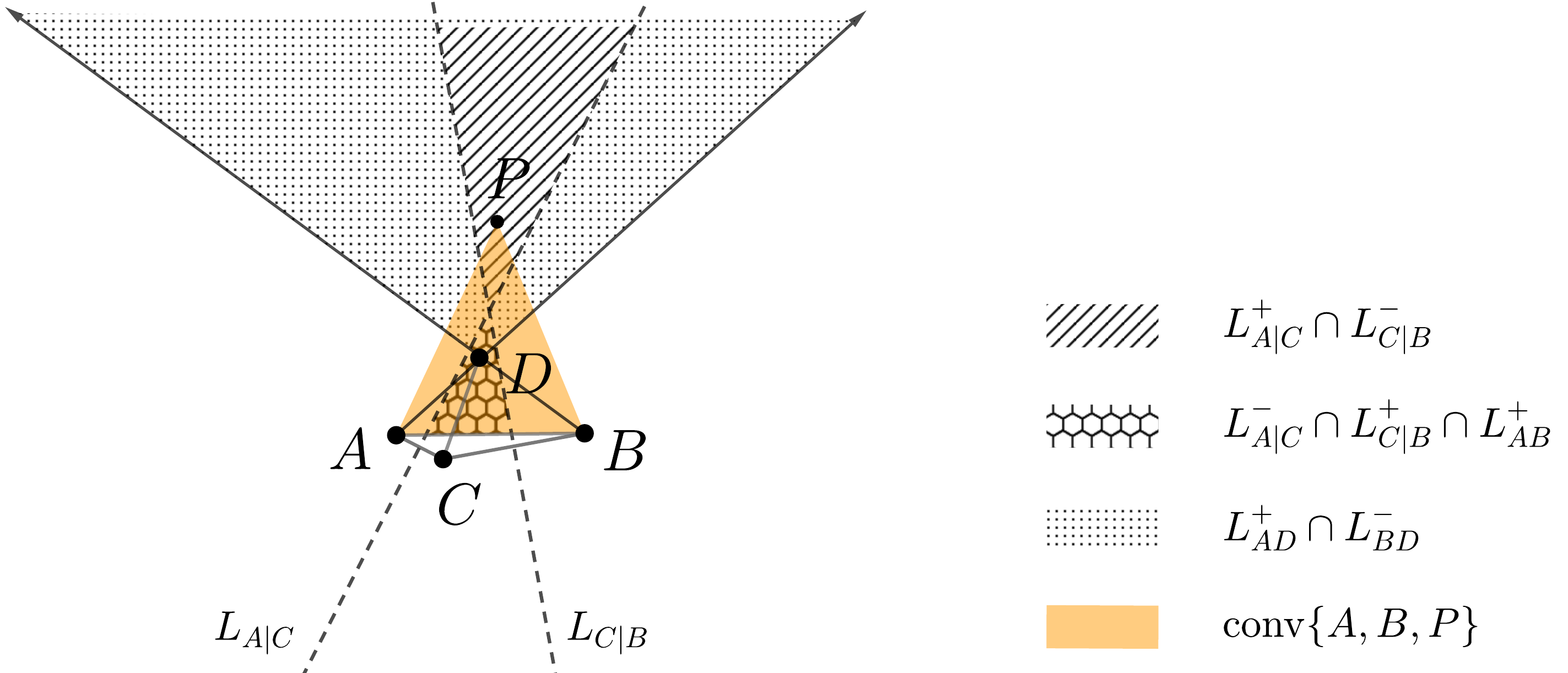}
  \caption{ An illustration of Lemma \ref{convhull}: The textured regions (honeycomb and hatched) denote the respective  possible locations for $D$ and $P$. Correspondingly, $P$ is contained in the region(dotted) where it satisfies $D \in \conv\left\{A,B,P\right\}$. }\label{abcdp}.  
\end{figure}

    Since the segments $\overline{AB}$ and $\overline{CD}$ intersect, and no three points are collinear, the points $C$ and $D$ lie on either side of the line $L_{AB}$; assume without loss of generality, $C \in L^-_{AB}$ and $D \in L^+_{AB}$. Further, the point $D$ being closer to $C$ than both $A$ and $B$ implies $D \in  L^-_{A|C} \cap L^+_{C|B}$. Likewise, $P \in  L^+_{A|C} \cap L^-_{C|B}$. Now it can be easily checked that $ L^+_{A|C} \cap L^-_{C|B} \subset L^+_{AD} \cap L^-_{BD}$ i.e., $P \in L^+_{AD} \cap L^-_{BD}$ (See Figure \ref{abcdp} for an illustration). Consequently, $D \in \conv\left\{ A, B ,P\right\}$ as follows from Lemma \ref{chull}.
    
    \end{proof}

\begin{lemma} \label{quadrilateral}
    Let $\delta > 0$. Suppose $A, ~ B,~ C,~ D,~ X,~ Y$ are six points in $\R^2$ satisfying the following:
    \begin{enumerate}
        \item $ABCD$ is a quadrilateral with each side of length less than $\delta$ and each diagonal of length at least $\delta$.
        \item The distance from $X$ to $C$ and $D$ is less than $\delta$, and the distance from $Y$ to $A$ and $B$ is less than $\delta$.
        \item  The distance between $X$ and $Y$ is at least $\delta$.
    \end{enumerate}
    Then, $X$ is a guard point of $ABCD$ along the side $AB$ if and only if  $Y$ is a guard point of $ABCD$ along the side $CD$. Furthermore, in either case, both points $X$ and $Y$ are less than $\delta$ away from $A$, $B$, $C$, and $D$.
\end{lemma}

\begin{proof}

       \begin{figure}[h!]
       \centering
  \begin{subfigure}[b]{0.40\textwidth}
    \centering \includegraphics[scale=0.25]{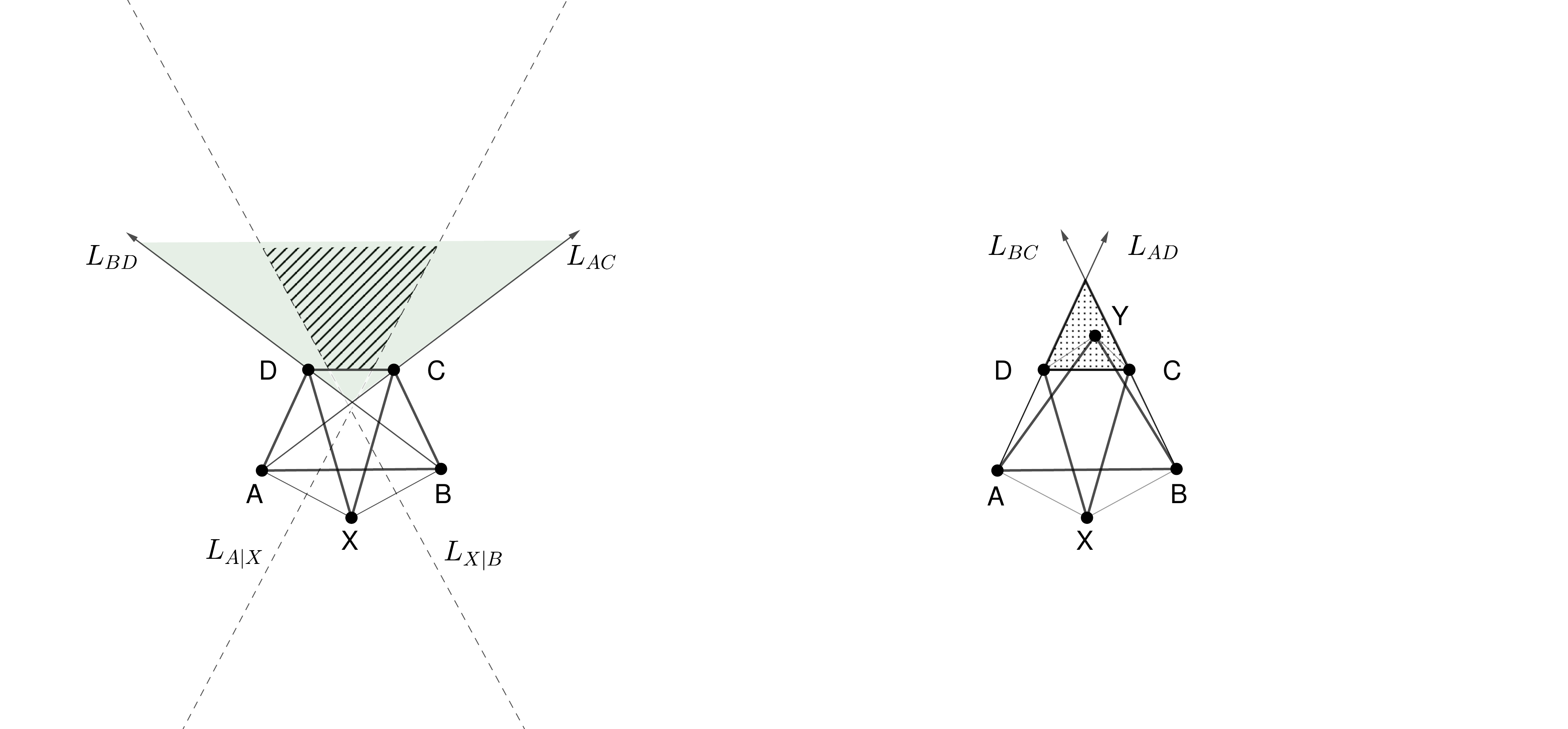}
    \caption{} \label{y1}
  \end{subfigure}
  \begin{subfigure}[b]{0.40\textwidth}
    \centering 
    \includegraphics[scale=0.25]{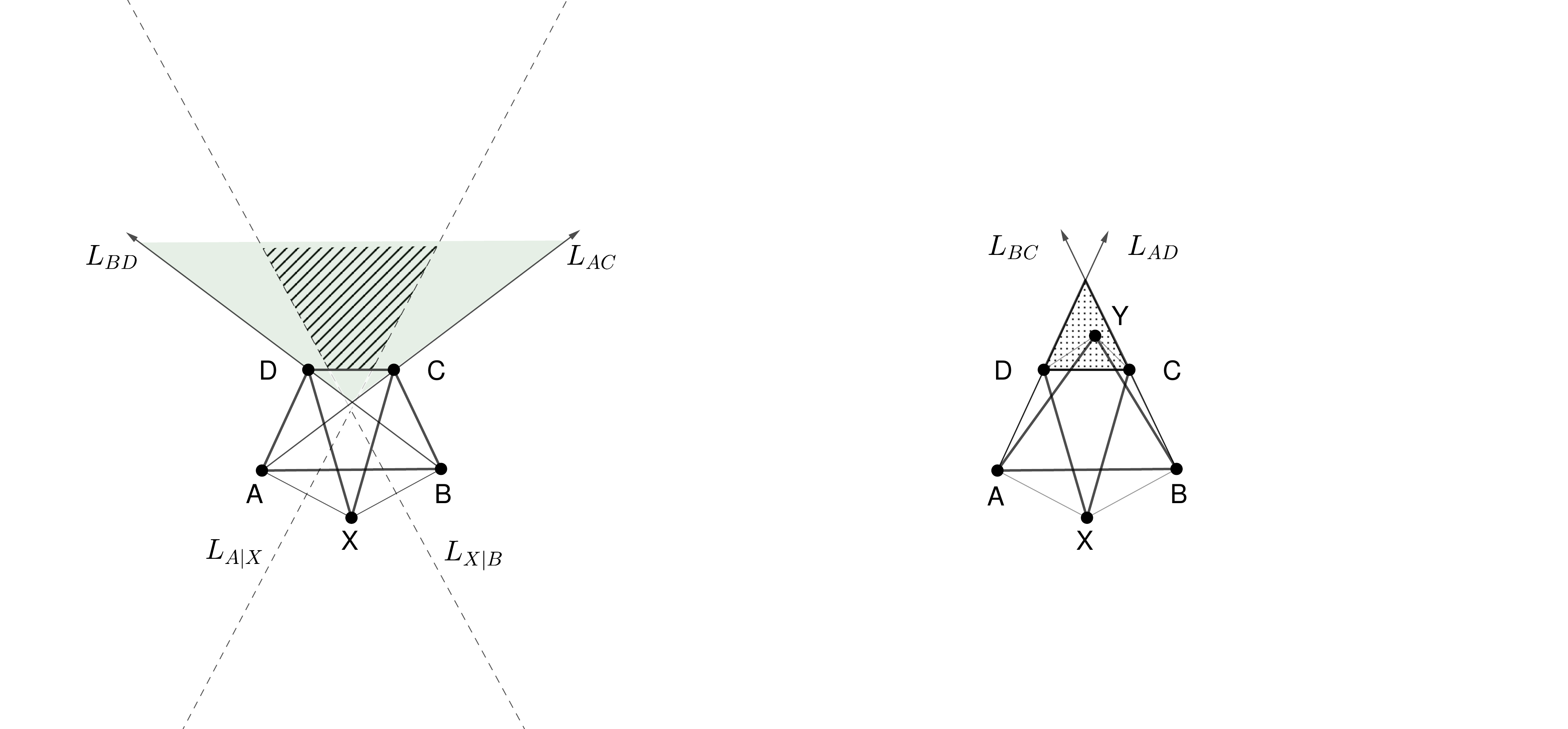}
        \caption{} \label{y2}
  \end{subfigure}
  \caption{An illustration of Lemma \ref{quadrilateral}. In this arrangement, the point $Y$ lies in the intersection of hatched and dotted regions. (Right) The dotted region represents all guard points of $ABCD$ along the side $CD$; the segments $\overline{YA}$ and $\overline{YB}$ intersect $\overline{CD}$.}
  \label{yyy}
\end{figure}

    We prove one direction; the converse follows analogously by swapping $A, ~ B$ with $C, ~D$.
    
    Suppose $X$ is a guard point of $ABCD$ along the side $AB$. From $(1)$, it follows that the quadrilateral $ABCD$ is simple and convex. The points $A$, $B$, and $X$ are not collinear as follows from $(2)$ and $(3)$. Hence, without loss of generality, assume the points $C$ and $D$ are on the left side, and $X$ on the right side of the line $L_{AB}$. The point $Y$ being closer to both $A$ and $B$ than $X$ implies $Y \in  L^+_{A|X} \cap L^-_{X|B}$ (An illustration of a possible arrangement is shown in Figure \ref{yyy}).  We have the following sequential observations:
        \begin{enumerate}[label=(\alph*)]
        \item $C \in L^-_{A|X}$ since the point $C$ is closer to $X$ than $A$. Similarly, $D \in L^+_{X|B}$ since the point $D$ is closer to $X$ than $B$.
        \item This implies $L^+_{A|X} \cap L^-_{X|B} \subset \conv\{A,B,C,D,X\} \cup L^-_{CD}$.
        \item However, $Y \notin \conv\{A,B,C,D,X\}$ since  $d(X,Y) \geq \delta$. Hence, $Y \in L^-_{CD}$.
        \item Additionally, $\mathrm{(a)}$ also implies $L^-_{CD} \cap (L^+_{A|X} \cap L^-_{X|B}) \subset L^+_{AC} \cap L^-_{BD}$ (See Figure \ref{y1}).
        \item Since $d(B,D) \geq \delta$, it follows that $D \notin \conv\left\{A,B,Y\right\}$. Consequently, by Lemma \ref{chull}, we obtain  $Y \in L^-_{AD} \cup L^+_{BD}$. Similarly, we have $Y \in L^-_{AC} \cup L^+_{BC}$.
    \end{enumerate}   
         Henceforth, from the observations $(\mathrm{d})$ and $(\mathrm{e})$, it follows that $Y \in L^-_{CD} \cap L^+_{BC} \cap L^-_{AD}$,  the region of all guard points of $ABCD$ along the side $CD$ (See Figure \ref{y2}). More precisely, $Y$ is a guard point along the side $CD$. 
         
         Furthermore, since the segments $\overline{YA}$ and $\overline{YB}$ intersects the segment $\overline{CD}$, it follows from Lemma \ref{cone} that the distance of $Y$ from points $C$ and $D$ is less than $\delta$. Likewise, the distance of $X$ from points $A$ and $B$ is less than $\delta$.
         
\end{proof}
Note that the planar-Rips complex induced by the six points in the above lemma is nothing but the boundary complex of octahedron $\O_2$.

\begin{theorem} \label{pseudorips}
    For any $n \geq 2$ and $X \subset \R^2$, if the planar-Rips complex $\RR(X)$ is an $n$-dimensional pseudomanifold then $\RR(X)$ is isomorphic to $\mathcal{O}_n$.
\end{theorem}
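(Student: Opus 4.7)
The plan is to proceed by induction on $n \geq 2$. The base case $n=2$ is exactly \Cref{octa}. For the inductive step, assume the statement holds in dimension $n-1$ (with $n \geq 3$) and let $\RR(X)$ be an $n$-dimensional pseudomanifold. For any vertex $v \in X$, the link $\Lk(v)$ is an $(n-1)$-dimensional weak-pseudomanifold; by \Cref{cycles} it has at most two strongly connected components, each of which is an $(n-1)$-dimensional pseudomanifold. Since $\RR(X)$ is a flag complex, $\Lk(v)$ coincides with the induced subcomplex of $\RR(X)$ on the neighbours of $v$, so each strong component inherits a planar-Rips structure and, by the inductive hypothesis, is isomorphic to $\O_{n-1}$.

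The crux of the argument is to rule out \emph{singular} vertices. Suppose for contradiction that some $v$ has $\Lk(v) = C_1 \sqcup C_2$ with each $C_i \cong \O_{n-1}$ and $V_i := V(C_i)$ of size $2n$. By \Cref{polygon} each shadow $\Sc(V_i)$ is a simple convex $2n$-gon, and by \Cref{hull} the two convex hulls $\conv(V_1)$ and $\conv(V_2)$ are disjoint inside $\R^2$. Since $C_1$ and $C_2$ are distinct components of $\Lk(v)$ and $\RR(X)$ is a flag complex, every $a \in V_1$ and $b \in V_2$ must satisfy $d(a,b) \geq 1$, while $d(v, x) < 1$ for all $x \in V_1 \cup V_2$. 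Because $n \geq 3$, each $V_i$ carries at least three antipodal pairs, and therefore each $C_i$ contains an induced copy of $\O_2$. The idea is to pick such an octahedral subconfiguration inside $V_1$ together with an antipodal pair in $V_2$, and invoke \Cref{quadrilateral} (with \Cref{guard} supplying the guard-point) to force $v$ to act as a guard point of $\conv(V_1)$ along two incompatible edges simultaneously. That geometric impossibility contradicts the unit-distance adjacency of $v$ to every vertex of $V_1 \cup V_2$.

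Once singular vertices are ruled out, every vertex link in $\RR(X)$ is isomorphic to $\O_{n-1}$, which is a normal pseudomanifold. For any simplex $\tau \in \RR(X)$ of codimension at least two, pick a vertex $v \in \tau$; then $\Lk_{\RR(X)}(\tau) = \Lk_{\Lk(v)}(\tau \setminus \{v\})$, and the right-hand side is connected because $\tau \setminus \{v\}$ has codimension at least two in $\Lk(v) \cong \O_{n-1}$ and the latter is a normal pseudomanifold. Hence $\RR(X)$ itself is a normal pseudomanifold, and \Cref{normal} concludes $\RR(X) \cong \O_n$, completing the induction.

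The main obstacle is the geometric non-existence step in the inductive argument, since the planar fundamental-group machinery used in \Cref{octa} does not carry over: $\O_{n-1}$ is simply connected for $n \geq 3$, so the wedge-of-circles count that produced the contradiction there is no longer available. The replacement must be combinatorial-geometric, built out of the guard-point lemmas; the delicate bookkeeping is choosing an $\O_2$-subconfiguration in $V_1$ whose interaction with an antipodal pair in $V_2$ saturates the guard-point constraints and produces the required contradiction.
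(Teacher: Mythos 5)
There is a genuine gap at the heart of your inductive step. Everything hinges on your claim that a singular vertex $v$ with $\Lk(v)=C_1\sqcup C_2$, $C_i\cong\O_{n-1}$, is geometrically impossible, but you never actually prove this: you only announce an intention ("the idea is to pick such an octahedral subconfiguration \dots and invoke \Cref{quadrilateral} \dots to force $v$ to act as a guard point along two incompatible edges") and then concede that "the delicate bookkeeping" remains to be done. Note that \Cref{quadrilateral} has quite specific hypotheses (a convex quadrilateral with short sides and long diagonals, a guard point $X$ within distance $r$ of the far side, and a point $Y$ at distance $\geq r$ from $X$); your $v$ is within distance $1$ of \emph{every} vertex of $V_1\cup V_2$, so it is not clear what plays the role of $Y$, nor why two "incompatible" guarded edges must arise. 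Also observe that cheap counting does not save you: two antipodal pairs, one from each $C_i$, give $v$ only four pairwise non-adjacent neighbours, so \Cref{k16} is not violated. Since this is precisely the step where the fundamental-group argument of \Cref{octa} fails for $n\geq 3$ (as you correctly note), leaving it as a plan means the proof is not complete. Your remaining steps (the reduction of normality of $\RR(X)$ to normality of the vertex links via $\Lk_{\RR(X)}(\tau)=\Lk_{\Lk(v)}(\tau\setminus\{v\})$, and the final appeal to \Cref{normal}) are fine.

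For comparison, the paper's proof never isolates and excludes singular vertices. It fixes \emph{one} strongly connected component $\widetilde{\O}\cong\O_{n-1}$ of a vertex link, shows via \Cref{triangle} and the pseudomanifold condition that $x$ lies outside the convex $2n$-gon $\Sc(\widetilde{\O})$ and is a guard point along some side $ab$ (\Cref{guard}), then uses the pseudomanifold condition on an $(n-1)$-face $\Delta\supseteq\{a,b\}$ to produce the second point $y$ with $\Delta\cup\{y\}\in\RR(X)$, and applies \Cref{quadrilateral} to the quadrilateral $ab\tilde a\tilde b$ (with $X=x$, $Y=y$, which \emph{are} at distance $\geq 1$) to propagate adjacency of $y$ to all of $\widetilde{\O}$. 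This exhibits an induced $\O_n$ inside $\RR(X)$, and the observation that a pseudomanifold has no proper subcomplex that is a pseudomanifold of the same dimension forces $\RR(X)\cong\O_n$ outright -- so the possibility of a second link component is killed as a byproduct rather than attacked head-on. If you want to salvage your outline, the most economical fix is to adopt this construction: once $\langle x,y,V(\widetilde{\O})\rangle\cong\O_n$ is in hand, the minimality argument replaces both your singular-vertex exclusion and your appeal to \Cref{normal}.
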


\begin{proof}
    We proceed by induction. Firstly, for the case when $n=2$, it follows from Theorem \ref{octa} that $\RR(X)$ is isomorphic to $\O_2$. Now let $n>2$ and consider a strongly connected component of link of any  $x \in X$, say $\widetilde{\O}$. By induction, the subcomplex $\widetilde{\O}$ is isomorphic to $\O_{n-1}$. From Lemma \ref{polygon}, the shadow $\Sc(\widetilde{\O})$ is a $2n$-sided convex polygon.

    \begin{customclaim}{1} \label{cc1}
        The point $x$ cannot be inside the polygon $\Sc(\widetilde{\O})$
    \end{customclaim} 
    \begin{proof}\renewcommand{\qedsymbol}{}
        On contrary, suppose $x$ is inside $\Sc(\widetilde{\O})$. Then there exists an $(n-1)$-dimensional simplex $\Delta$ in $\widetilde{\O}$ such that $x \in \conv((\Delta))$ . Since $\RR(X)$ is an $n$-dimensional pseudomanifold, the $(n-1)$-dimensional simplex $\Delta$ is shared by exactly two $n$-dimensional simplices, one of which is $\Delta\cup\left\{ x \right\}$. Hence, there exists another point $y$ outside $\widetilde{\O}$ such that the vertices of $\Delta$ are adjacent to $y$ i.e., $\Delta\cup\left\{ y \right\}$ is an $n$-dimensional simplex in $\RR(X)$. Now from Lemma \ref{triangle}, we have $x \in \Lk(y)$ i.e., $\Delta\cup\left\{ x \right\} \cup \left\{ y \right\}$ is an $(n+1)$-dimensional simplex. But this is not possible since $\RR(X)$ is $n$-dimensional. This concludes the proof of Claim \ref{cc1}.
       
    \end{proof}
    Hence the point $x$ is outside the polygon $\Sc(\widetilde{\O})$. Now we show that $x$ is a guard point of polygon $\Sc(\widetilde{\O})$ along a side.

    \begin{customclaim}{2} \label{cc2}
        There exists a side $ab$ of the polygon $\Sc(\widetilde{\O})$ such that $x$ is a guard point of $\Sc(\widetilde{\O})$ along the side  $ab$
    \end{customclaim} 
    \begin{proof}\renewcommand{\qedsymbol}{}
    \begin{figure}[h!]
    \centering \includegraphics[scale=0.18]{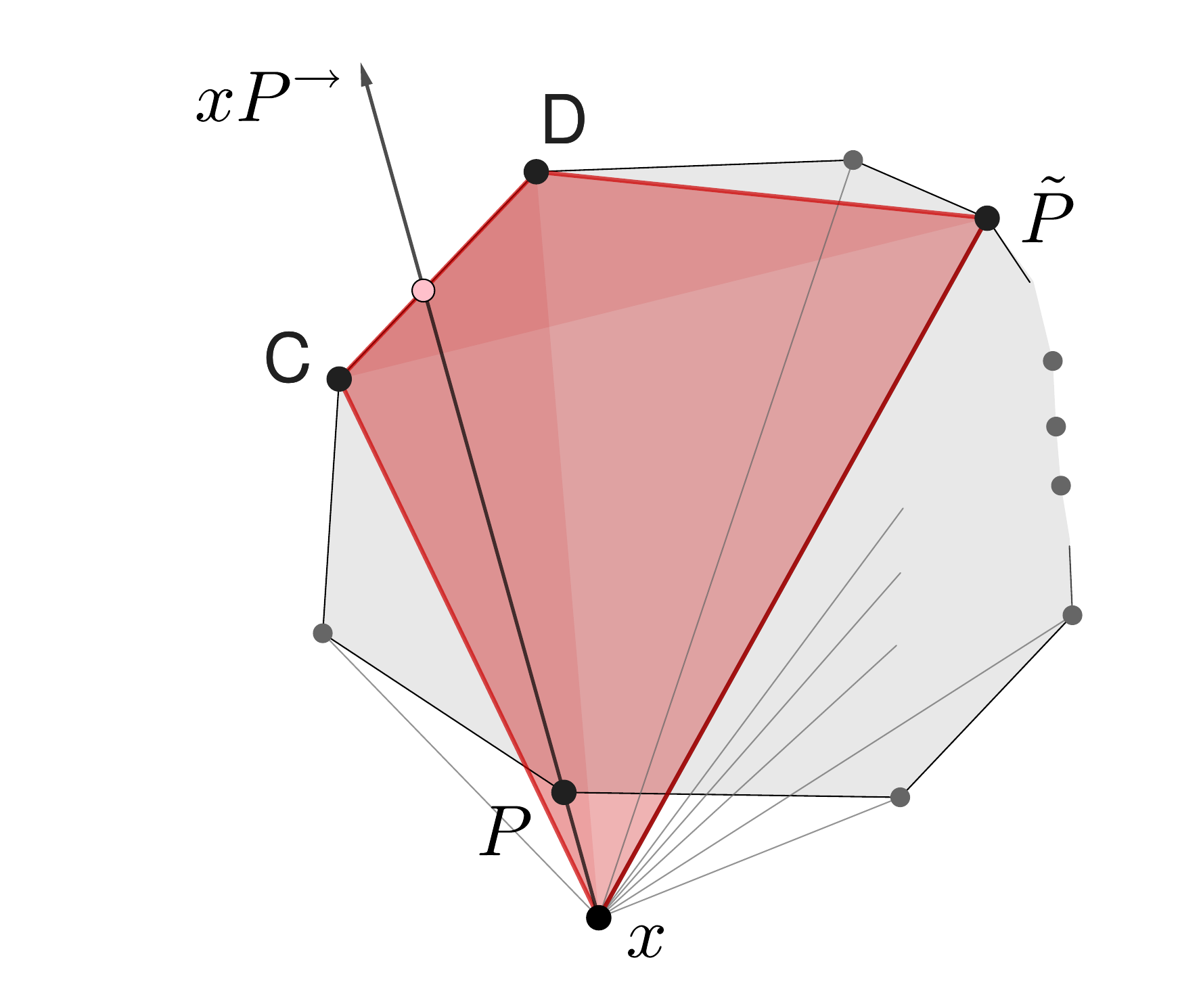}

  \caption{An illustration of the case when the ray $\overrightarrow{xP}$ intersects the side $cd$, and the intersection is outside the segment $\overline{xP}$. This yields a $3$-simplex $\left\{x,\tilde{P},c,d \right\}$ containing the point $P$. }\label{xpray}.  
\end{figure}
        Following Lemma \ref{guard}, it suffices to show that for any vertex $P$ in the polygon $\Sc(\widetilde{\O})$, the ray $\overrightarrow{xP}$ does not intersect $\Sc(\widetilde{\O})$ outside the segment $\overline{xP}$. On contrary, suppose there exists a vertex $P$ such that the extension of ray $\overrightarrow{xP}$ intersects the polygon $\Sc(\widetilde{\O})$, say at some edge $cd$ (See Figure \ref{xpray}). Then we have $P \in \conv\left\{x,c,d\right\}$. Now consider the antipodal point $\tilde{P}$ of $P$ in $\widetilde{\O}$ where the points $x,c,d$ are adjacent to $\tilde{P}$ i.e., $\left\{x,c,d\right\} \subset \Lk(\tilde{P})$. Hence from Lemma \ref{triangle}, we have $P \in \Lk(\tilde{P})$ i.e., the points $P$ and $\tilde{P}$ are adjacent which is a contradiction. This concludes the proof of Claim \ref{cc2}.
        
    \end{proof}
    Now consider any $(n-1)$-dimensional simplex $\Delta$ in $\widetilde{\O}$  which contains the vertices $a$ and $b$. Since $\RR(X)$ is an $n$-dimensional pseudomanifold, the $(n-1)$-dimensional simplex $\Delta$ is shared by exactly two $n$-dimensional simplices, one of which is $\Delta\cup\left\{ x \right\}$. Hence, there exists another point $y$ outside $\widetilde{\O}$ such that the vertices of $\Delta$ are adjacent to $y$ i.e., $\Delta\cup\left\{ y \right\}$ is an $n$-dimensional simplex in $\RR(X)$. Note that $x$ and $y$ are non-adjacent since otherwise $\Delta \cup \left\{x,y\right\}$ forms an $(n+1)$-simplex.

    \begin{customclaim}{3} \label{cc3}
        The point $y$ is adjacent to all the vertices of $\widetilde{\O}$
    \end{customclaim}
    \begin{proof}\renewcommand{\qedsymbol}{}
       \begin{figure}[h!]
       \centering
  \begin{subfigure}[b]{0.40\textwidth}
    \centering \includegraphics[scale=0.15]{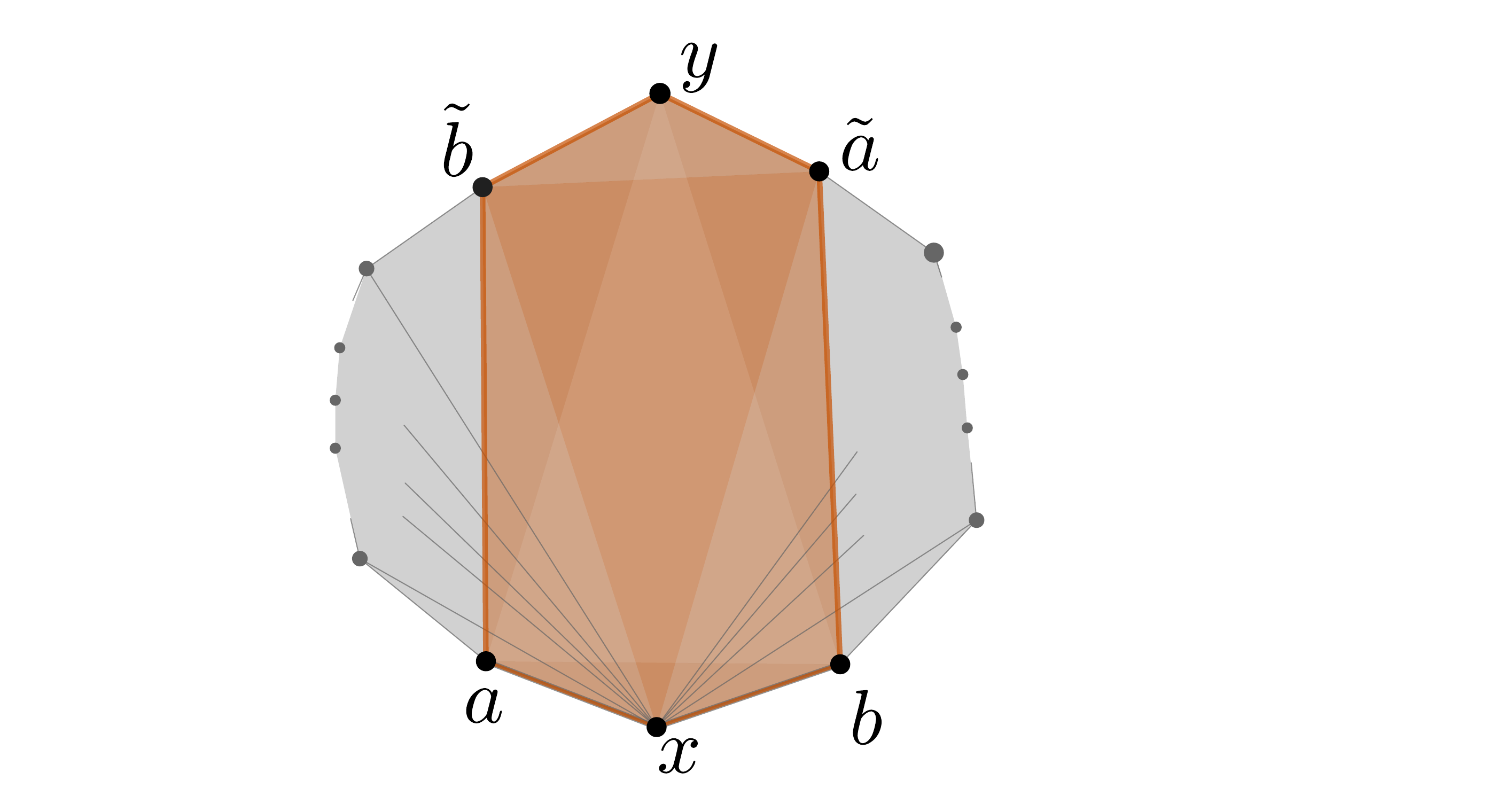}
    \caption{} \label{abcdxy}
  \end{subfigure}
  \begin{subfigure}[b]{0.40\textwidth}
    \centering 
    \includegraphics[scale=0.15]{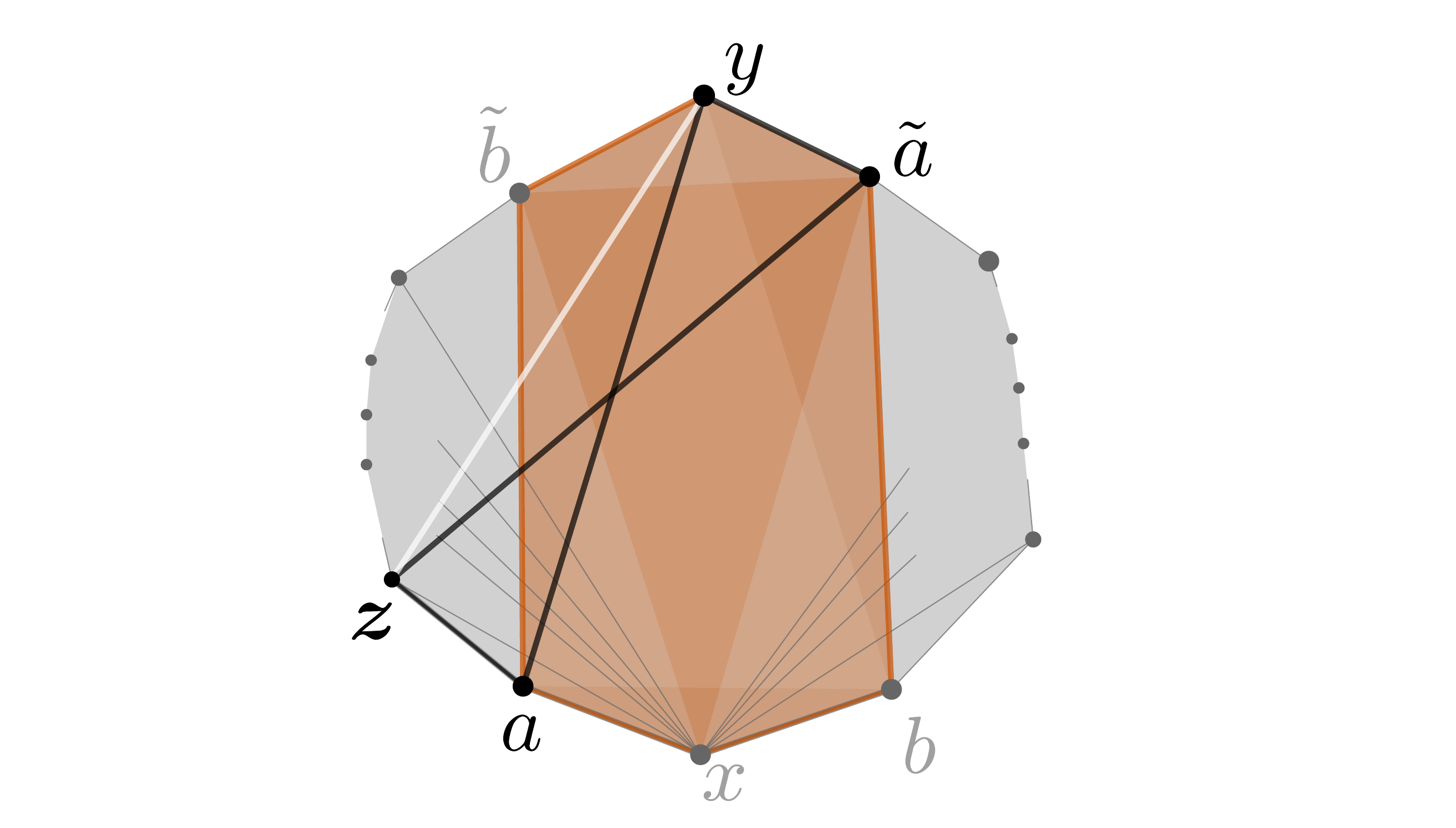}
        \caption{} \label{yz_adj2}
  \end{subfigure}
  \caption{The points $x$ and $y$ are guard points of polygon $\tilde{\O}$. The adjacency of $y$ with $\tilde{a}$ \& $\tilde{b}$, and $y$ being a guard point of polygon $\tilde{\O}$ along the side $\overline{\tilde{a}\tilde{b}}$ prompts $y$ to be adjacent to every other vertex of $\tilde{\O}$. On the right, the segments $\overline{ya}$ and $\overline{z\tilde{a}}$ intersect, implying adjacency between $y$ and $z$.} 
\end{figure}
    Firstly, the point $y$ is adjacent to $a$ and $b$. Now consider the antipodal points $\tilde{a}$ and $\tilde{b}$ of $a$ and $b$ respectively. With the points $a,b,\tilde{a},\tilde{b},x,y$, we are in the setting of Lemma \ref{quadrilateral} with the parameter $\delta=1$, and the quadrilateral being $ab\tilde{a}\tilde{b}$. Hence from Lemma \ref{quadrilateral}, the point $y$ is adjacent to $\tilde{a}$ and $\tilde{b}$, and is a guard point of quadrilateral $ab\tilde{a}\tilde{b}$ along the side $\tilde{a}\tilde{b}$. The induced structure is illustrated in Figure \ref{abcdxy}. 
    
    Now let $z$ be any point of $\widetilde{\O}$ other than $a,b,\tilde{a},\tilde{b}$. Since $y$ is a guard point of the quadrilateral $ab\tilde{a}\tilde{b}$, either the segment $\overline{ya}$ intersects the segment $\overline{z\tilde{a}}$ or the segment $\overline{yb}$ intersects the segment $\overline{z\tilde{b}}$ in $\Sc(X)$, depending on whether the point is on the left side or the right side of line $L_{xy}$ (For instance, see Figure \ref{yz_adj2}). For both the cases, from Lemma \ref{cone}, either $y$ or $z$ is an apex since the antipodal points $a$ and $\tilde{a}$ or $b$ and $\tilde{b}$ cannot be adjacent i.e., $y$ is adjacent to $z$. This concludes the proof of Claim \ref{cc3}.   
    \end{proof}

Hence, the subcomplex spanned by $x$, $y$ and vertices of $\widetilde{\O}$ is isomorphic to $\O_n$. Since the only subcomplex of a pseudomanifold which is also a pseudomanifold of same dimension is itself, the Rips complex $\RR(X)$ is isomorphic to $\O_n$.

\end{proof}

The above characterization of pseudomanifolds provides further  insight into the Rips complexes having a weak-pseudomanifold structure, where each strongly connected component is a pseudomanifold, and the corresponding induced subcomplex again inherits the planar-Rips structure. 

\begin{definition}[Iterated chain of $\O_n$s] \label{iterated} A finite $n$-dimensional simplicial complex $(n \geq 2)$ is said to be an \textbf{iterated chain of $\O_n$s} if it can be expressed as the union $\bigcup_{i=1}^{m} P_i$, where each $P_i$ is isomorphic to $\O_n$, no three of them have a non-empty pairwise intersection, and any two of them with non-empty intersection intersect
     \begin{enumerate}[label=(\alph*)]
         \item at a single vertex when $n=2$.
         \item either at a single vertex or a single edge when $n>2$.
     \end{enumerate}
\end{definition}

It can be easily checked that an iterated chain of $\O_n$s is a weak-pseudomanifold, where each $P_i$ is a strongly connected component.

\begin{proposition} \label{iteratedwedge}
    Let $K$ be an iterated chain of $\O_n$s, then $K$ is homotopy equivalent to wedge sum of circles and $n$-spheres i.e., $K \simeq \bigvee^m{\s^n} \vee \bigvee^p{\s^1}$ for some $p \geq 0$, where $m=\#$ of strongly connected components in $K$.
\end{proposition}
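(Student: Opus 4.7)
The plan is to induct on the pair $(m,|E(G)|)$, where $G$ is the intersection graph of the chain: its vertices are the pieces $P_1,\dots,P_m$, and two vertices are joined by an edge precisely when the corresponding pieces have nonempty intersection. Every $P_i\cong\O_n$ is homeomorphic to $\s^n$, every pairwise intersection $A_{ij}=P_i\cap P_j$ is a single vertex or a single edge (hence a contractible CW subcomplex of both $P_i$ and $P_j$), and the hypothesis that no three pieces have a common pairwise intersection will be used repeatedly to keep intersection subcomplexes disjoint after local un-gluings. The base case $m=1$ is immediate: $K=P_1\cong\O_n\simeq\s^n$, giving $p=0$.

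For the inductive step, I would split into two subcases. \emph{Tree subcase:} if $G$ is a tree, it has a leaf $P_k$ meeting the rest of the complex in a single contractible set $A$. Set $K'=\bigcup_{i\ne k}P_i$; then $K'$ is again a connected iterated chain of $\O_n$s with $m-1$ pieces, and the inductive hypothesis gives $K'\simeq\bigvee^{m-1}\s^n\vee\bigvee^{p'}\s^1$. Since $A$ is a contractible CW subcomplex of both $K'$ and $P_k$, the pushout $K=K'\cup_A P_k$ is homotopy equivalent to $K'\vee(P_k/A)\simeq K'\vee\s^n$, so the required decomposition follows with $p=p'$.

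\emph{Cycle subcase:} if $G$ contains a cycle, pick an edge $e=(P_i,P_j)$ lying on a cycle, with $A=P_i\cap P_j$. Form $\widetilde K$ by taking $\bigsqcup_k P_k$ and gluing along every intersection \emph{except} $A$; equivalently, $\widetilde K$ is the iterated chain whose intersection graph is $G-e$. Since $e$ lay on a cycle, $G-e$ is still connected, so the inductive hypothesis applies and gives $\widetilde K\simeq\bigvee^m\s^n\vee\bigvee^{\tilde p}\s^1$. The original $K$ is recovered from $\widetilde K$ by identifying the two copies $A_i\subset P_i$ and $A_j\subset P_j$ via the natural homeomorphism $A_i\cong A\cong A_j$. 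Both copies are contractible CW subcomplexes of $\widetilde K$ and---crucially---they are \emph{disjoint} in $\widetilde K$: if some $x\in A$ were identified with a point of $A_j$ through the remaining gluings, it would force $x\in P_i\cap P_j\cap P_l$ for some $l\ne i,j$, contradicting the no-triple-intersection hypothesis in Definition \ref{iterated}. Collapsing each of $A_i$ and $A_j$ to a point is therefore a homotopy equivalence, reducing the identification to the quotient of the path-connected CW complex $\widetilde K$ by identifying two distinct points, which is homotopy equivalent to $\widetilde K\vee\s^1$. Hence $K\simeq\bigvee^m\s^n\vee\bigvee^{\tilde p+1}\s^1$, completing the induction.

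The main technical obstacle is the very last homotopy equivalence in the cycle subcase, namely $\widetilde K/(A_i\sim A_j)\simeq\widetilde K\vee\s^1$. I would establish it by first collapsing the contractible subcomplexes $A_i,A_j$ to points $p_i,p_j$ (a homotopy equivalence because they are contractible CW subcomplexes), reducing to the well-known fact that identifying two distinct points $p_i,p_j$ in a path-connected CW complex yields a new space homotopy equivalent to the original wedge $\s^1$; this can be verified directly by adjoining a 1-cell from $p_i$ to $p_j$ and collapsing it, or by a short van Kampen / Mayer–Vietoris computation.
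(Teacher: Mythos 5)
Your proof is correct, and it is substantially more complete than the one in the paper. The paper's argument is a two-sentence sketch: contract each shared edge to a point (legitimate because these intersections are contractible subcomplexes lying in no third copy of $\O_n$), observe that the result is a collection of $n$-spheres joined at single points, and then assert that cyclic arrangements of these spheres contribute circles to the homotopy type. Your induction on the intersection graph $G$ supplies exactly the two steps that the paper leaves implicit: the tree step, where a leaf sphere splits off as a wedge summand because the attaching locus is a contractible subcomplex of both sides, and the cycle step, where un-gluing an intersection lying on a cycle and recognizing the re-gluing as a two-point identification produces the $\s^1$ summand. The disjointness of the two copies $A_i$ and $A_j$ in $\widetilde K$, which you derive from the no-triple-pairwise-intersection clause of \Cref{iterated}, is precisely where that hypothesis does real work, and your argument makes this visible (for longer identification chains through intermediate pieces $P_{l_1},\dots,P_{l_r}$ the same clause applies, since any such chain forces a point common to three pieces with pairwise nonempty intersections). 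A further payoff of your route is that it pins down the number of circles as the cycle rank $p=|E(G)|-m+1$ of the intersection graph, which the statement and the paper's proof leave unspecified beyond $p\geq 0$; this also makes precise the remark following \Cref{weakhomotopy} that $p=0$ unless some components are arranged cyclically.
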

\begin{proof}
    Any two copies of $\O_n$ can share at most a single vertex or a single edge. Further, any two $1$-simplices with a common vertex cannot be a part of three distinct copies of $\O_n$. This allows for the contraction of edges shared by two copies of $\O_n$, to a single point upto homotopy, resulting in a chain of $n$-spheres $\s^n$ along the vertices. However, these spheres can be arranged in a cyclic fashion, which gives rise to copies of circles in its homotopy type. 
\end{proof}
\begin{theorem} \label{weakiswedge}
     For any $n \geq 2$ and a finite subset $X \subset \R^2$, if the planar-Rips complex $\RR(X)$ is an $n$-dimensional weak-pseudomanifold then $\RR(X)$ is isomorphic to an iterated chain of $\O_n$s.
    
\end{theorem}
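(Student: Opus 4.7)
The plan is to decompose $\RR(X) = \bigcup_{i=1}^{m} P_i$ into its strongly connected components and verify they satisfy the structure of $\Cref{iterated}$. Each component $P_i$ is an $n$-dimensional pseudomanifold: it inherits the property that each $(n-1)$-simplex is contained in exactly two $n$-simplices (both such $n$-simplices lie in the same strongly connected component), and it is strongly connected by construction. As an induced subcomplex of the planar-Rips complex $\RR(X)$, $P_i$ equals $\RR(V(P_i))$ and is itself planar-Rips, so $\Cref{pseudorips}$ gives $P_i \cong \O_n$. It remains to verify (i) no three $P_i$'s share a common vertex, and (ii) any two $P_i$'s intersect in at most a single vertex (for $n=2$) or at most a single vertex or edge (for $n \geq 3$).

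Condition (i) is immediate from $\Cref{cycles}$: three components meeting at a vertex $v$ would embed three distinct strongly connected subcomplexes $\Lk_{P_i}(v) \cong \O_{n-1}$ into $\Lk(v)$, exceeding the two-component bound. I verify condition (ii) by induction on $n$. For $n \geq 3$, given $u \in V(P_1) \cap V(P_2)$, the link $\Lk(u)$ is an $(n-1)$-dimensional planar-Rips weak-pseudomanifold and, by the inductive hypothesis, an iterated chain of $\O_{n-1}$'s; the components $\Lk_{P_1}(u), \Lk_{P_2}(u)$ share at most a vertex or edge. A case analysis combined with the prohibition on sharing $(n-1)$-simplices (each such shared simplex would lie in four $n$-simplices, contradicting weak-pseudomanifoldness) then confines $P_1 \cap P_2$ to a single vertex or edge.

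The base case $n = 2$ is the main obstacle. If $u, w \in V(P_1) \cap V(P_2)$ are distinct, then $uw$ cannot be an edge of $\RR(X)$: as a $1$-simplex it is an $(n-1)$-simplex and, being shared, would lie in four triangles, violating weak-pseudomanifoldness. Hence $d(u, w) \geq 1$, making $u, w$ antipodal in both $P_i \cong \O_2$. Writing $\Lk_{P_1}(u) = \{a_1, \ldots, a_4\}$ and $\Lk_{P_2}(u) = \{b_1, \ldots, b_4\}$, all eight equatorial vertices lie in the lens $L = B(u,1) \cap B(w,1)$, and applying $\Cref{polygon}$ to each equator (isomorphic to $\O_1$) gives the antipodal distances $d(a_1,a_3), d(a_2,a_4), d(b_1,b_3), d(b_2,b_4) \geq 1$. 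Moreover, $d(a_i, b_j) \geq 1$ for every $i, j$: were $a_i b_j$ an edge, the triangle $\{u, a_i, b_j\}$ would lie in a strongly connected component $P_\ell$ distinct from both $P_1, P_2$ (since $a_i \notin V(P_2)$ and $b_j \notin V(P_1)$), placing $u$ in three components and contradicting (i). Thus $\{a_1, a_3, b_1, b_3\}$ are four points of $L \setminus \{u, w\}$ pairwise at distance $\geq 1$.

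The contradiction reduces to a geometric lemma: for $d(u,w) \geq 1$, no four points of $L \setminus \{u, w\}$ can be pairwise at distance $\geq 1$. Placing $u = (-d/2, 0), w = (d/2, 0)$ with $d = d(u,w) \in [1, \sqrt{3}]$ (the upper bound because $L$'s diameter $\sqrt{4-d^2}$ must be at least $1$), the lens has horizontal extent $2 - d \leq 1$ and vertical extent $\sqrt{4 - d^2} \leq \sqrt{3}$. A coordinate analysis using the inequalities $|p - u|, |p - w| \leq 1$ together with pairwise separation $\geq 1$ shows that any candidate point at distance $\geq 1$ from the two vertically extremal points $(0, \pm\sqrt{1 - d^2/4})$ of $L$ must satisfy $|x| \geq d/2$, which the lens's horizontal extent forces onto $\{u, w\}$; excluding $u, w$ leaves at most the two tips, never four pairwise-separated points. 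This completes the verification of $\RR(X)$ as an iterated chain of $\O_n$'s; this delicate geometric lemma, with its tight interplay between the lens boundary, tip geometry, and extent constraints, is the main difficulty of the proof.
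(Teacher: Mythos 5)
You follow the paper's skeleton --- decompose $\RR(X)$ into strongly connected components, identify each with $\O_n$ via \Cref{pseudorips}, then control the pairwise intersections --- but you control the intersections by a genuinely different method: an induction on $n$ through vertex links plus a packing lemma in the lens $B(u,1)\cap B(w,1)$, where the paper instead uses \Cref{polygon} (each shadow $\Sc(P_i)$ is a convex $2n$-gon) together with \Cref{cone} and \Cref{k16}. Your lens lemma is true, but your proof of it is not: the four pairwise-separated points need not include the two tips of the lens, and the inequalities ``distance $\geq 1$ from both tips'' yield only $x^2+y^2\geq d^2/4$, not $|x|\geq d/2$. The clean argument is the one underlying \Cref{k16}: four points at distance $<1$ from $u$ that are pairwise at distance $\geq 1$ subtend pairwise angles $>\pi/3$ at $u$ and hence have angular spread $>\pi$ around $u$, while membership in $B(w,1)$ with $d(u,w)\geq 1$ confines them to an open half-plane bounded by the line through $u$ perpendicular to $uw$, whose angular spread from $u$ is $<\pi$.

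Two more substantive gaps remain. First, \Cref{iterated} also demands that no three of the $P_i$ have pairwise non-empty intersections; your condition (i) only forbids three components through a common vertex and says nothing about three components pairwise glued at three distinct vertices (a $3$-cycle of octahedra). This is a genuine constraint --- the remark after \Cref{weakhomotopy} that cyclic arrangements force $m>3$ rests on it --- and the paper disposes of it using the convexity of the shadows. Second, your inductive step for $n\geq3$ only sees $P_1\cap P_2$ inside the neighbourhood of $u$, so it does not exclude a second common vertex antipodal to $u$ in both components (which would make the intersection two isolated vertices rather than ``a single vertex or a single edge''), and for $n\geq4$ the inductive hypothesis permits $\Lk_{P_1}(u)$ and $\Lk_{P_2}(u)$ to share an edge, which would place an entire $2$-simplex in $P_1\cap P_2$; the ``case analysis'' you invoke to rule these out is precisely the missing content. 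The paper's route through \Cref{polygon} avoids both problems at once, since two convex $2n$-gons with non-degenerate simplices cannot share more than a vertex (resp.\ a vertex or an edge) without their interiors overlapping, which \Cref{cone} forbids.
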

\begin{proof}
  Firstly, each strongly connected component is isomorphic to $\O_n$ since the corresponding induced subcomplex on each component inherits a planar-Rips structure; denote the strongly connected components by $\{P_i\}_{i=1}^m$ (See Figure \ref{wedgeiter} for an illustration). Now, suppose any two strongly connected components, say $P_i$ and $P_j$ have a non-empty intersection, then their intersection is a subcomplex of codimension at least $2$. Further, from Lemma \ref{polygon}, any $k$-simplex ($k \geq 2$) in $P_i$ and $P_j$ is non-degenerate, and the polygons $\Sc(P_i)$ and $\Sc(P_j)$ are convex; hence the intersection $P_i \cap P_j$ is a vertex when $n=2$, and is either a vertex or an edge when $n>2$.
  
  Now from Lemma \ref{k16}, no more than two strongly connected components can intersect at a single vertex. Finally, it again follows from the convexity of $\Sc(\O_n)$ that any three $P_i$s cannot intersect pairwise inside $\R^2$.
\end{proof}

\begin{corollary} \label{weakhomotopy}
    If $\RR(X)$ is an $n$-dimensional weak-pseudomanifold, then 
    \[\RR(X)\simeq \bigvee_{i=1}^m{\s^n} \vee \bigvee_{j=1}^p{\s^1} \ ,\]
    for some $p \geq 0$, where $m=\#$ of strongly connected components in $\RR(X)$.
\end{corollary}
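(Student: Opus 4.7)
The statement is a direct corollary of the two results immediately preceding it, so my plan is essentially a two-line combination argument rather than a fresh proof. The plan is first to invoke \Cref{weakiswedge}, which asserts that any $n$-dimensional weak-pseudomanifold realizable as a planar-Rips complex $\RR(X)$ is isomorphic to an iterated chain of $\O_n$s, say $\bigcup_{i=1}^{m} P_i$ with $P_i \cong \O_n$ and $m$ equal to the number of strongly connected components of $\RR(X)$. Then I would feed this decomposition directly into \Cref{iteratedwedge}, which yields the homotopy equivalence
$$\RR(X) \simeq \bigvee_{i=1}^{m}\s^n \vee \bigvee_{j=1}^{p}\s^1$$
for some $p\ge 0$, with the same count $m$ of $\O_n$-pieces.

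The only bookkeeping that needs to be spelled out is that the integer $m$ from \Cref{weakiswedge} is literally the number of strongly connected components of $\RR(X)$: each $P_i$ is a pseudomanifold (being isomorphic to a cross-polytopal sphere $\O_n$), hence is strongly connected, and by the definition of an iterated chain any two distinct $P_i$'s meet in dimension at most $1$, so they cannot be joined into a larger strongly connected component through an $(n-1)$-face overlap. Thus the $P_i$'s are precisely the strongly connected components of $\RR(X)$, and the $m$ in the wedge sum matches the count in the statement.

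I do not expect any obstacle here, since \Cref{weakiswedge} and \Cref{iteratedwedge} have already done all of the geometric and topological work; the corollary is purely a concatenation. The integer $p$ is left unspecified because it depends on how the $P_i$'s are glued — a tree-like gluing yields $p=0$, whereas cyclic arrangements of $\O_n$-pieces contribute one $\s^1$ factor each, exactly as in the proof of \Cref{iteratedwedge}.
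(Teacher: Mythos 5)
Your proposal is correct and matches the paper's (implicit) argument exactly: the corollary is obtained by combining \Cref{weakiswedge} with \Cref{iteratedwedge}, and the identification of the $P_i$'s with the strongly connected components is exactly the observation the paper makes just after \Cref{iterated}. Nothing further is needed.
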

\begin{remark}
Note that $p = 0$ unless a subcollection of strongly connected components can be arranged in a ``cyclic" manner, in which case $p=1$.  Figure \ref{cyclewedge} illustrates how such a cyclic structure would look; however, it remains to be determined whether such a structure is possible. Additionally, if $p \neq 0$ then $m > 3$.
           \begin{figure}[h!]
       \centering
  \begin{subfigure}[b]{0.40\textwidth}
    \centering \includegraphics[scale=0.2]{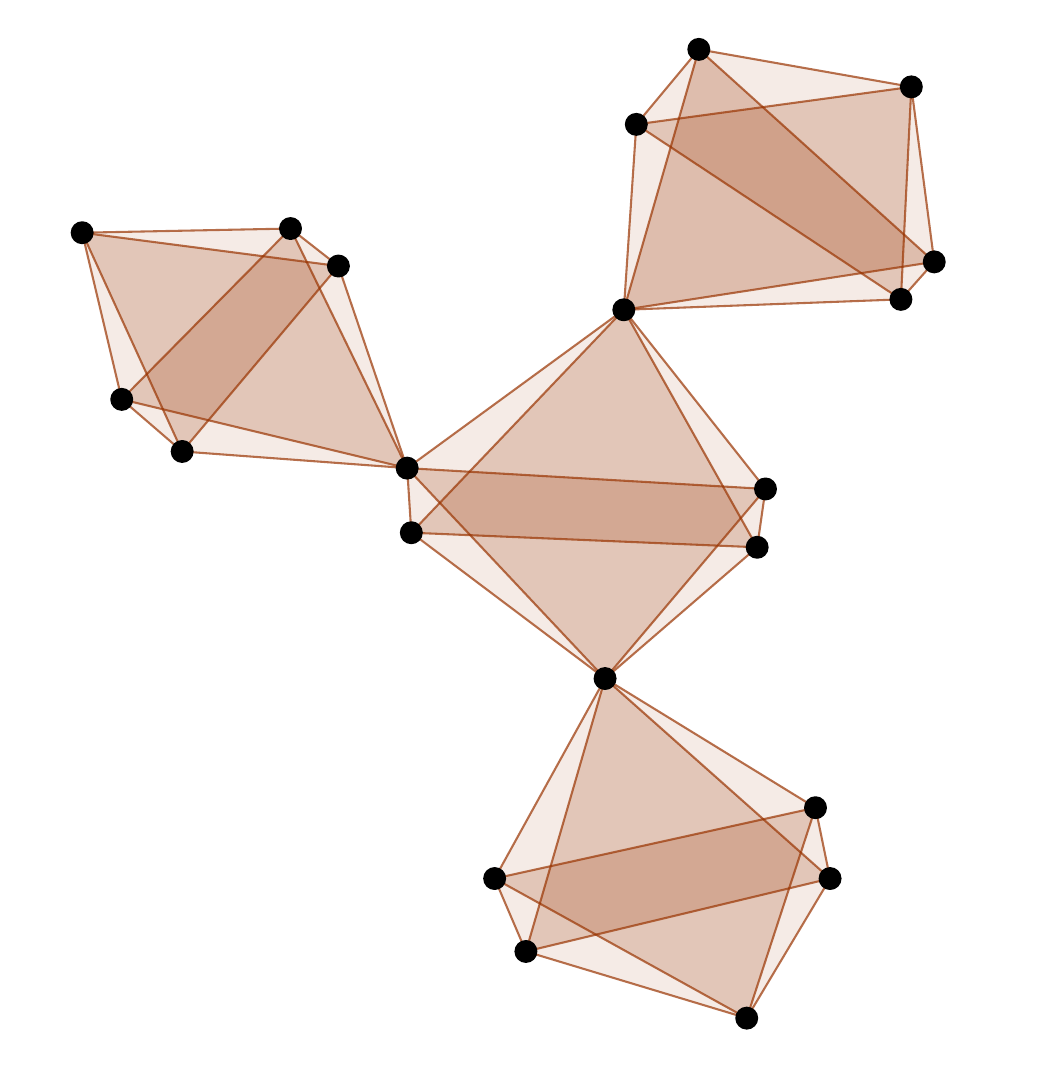}
    \caption{$m=4$, $p=0$} \label{iterwedge}
  \end{subfigure}
  \begin{subfigure}[b]{0.40\textwidth}
    \centering 
    \includegraphics[scale=0.18]{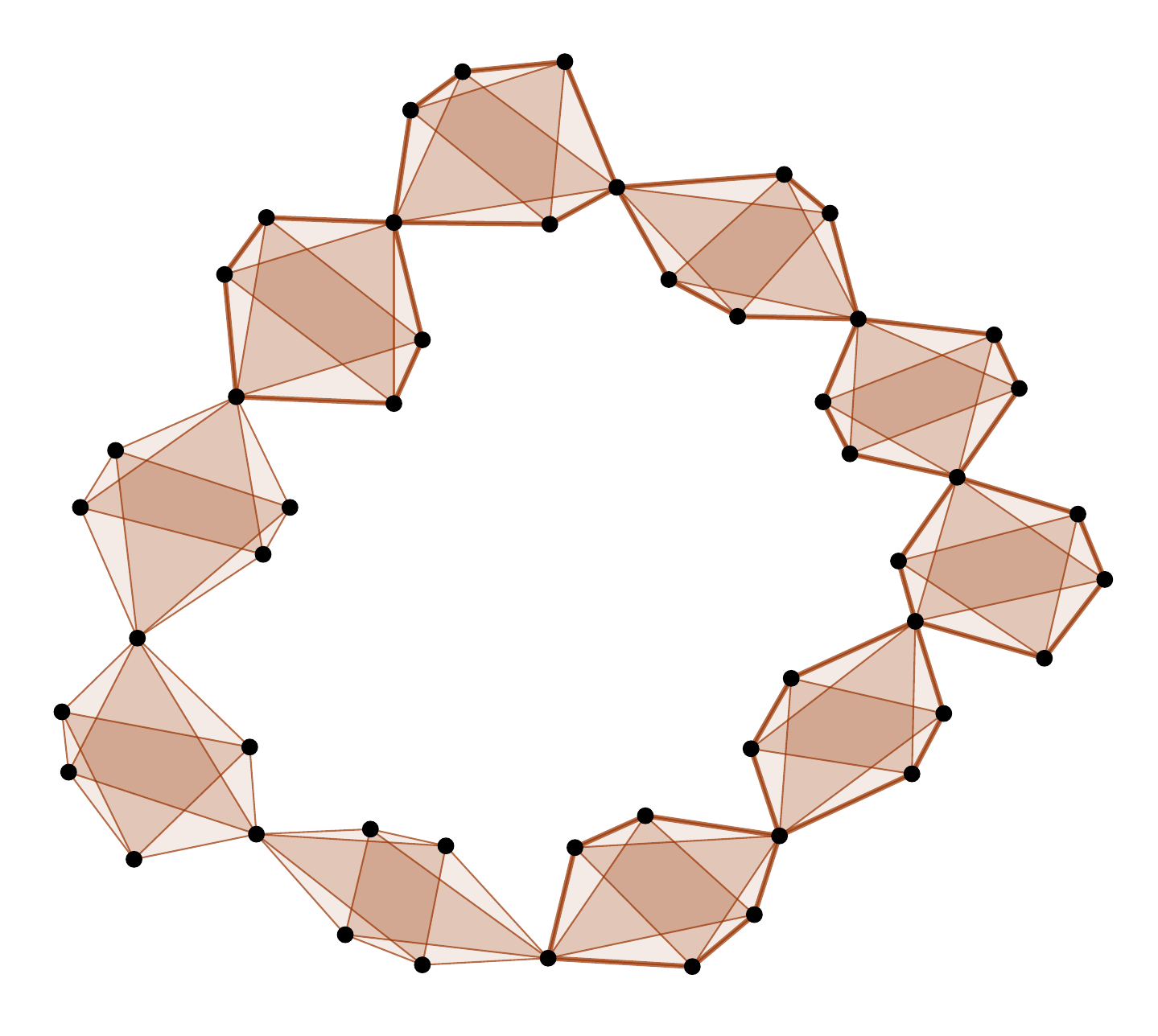}
        \caption{$m=10$, $p=1$} \label{cyclewedge}
  \end{subfigure}
  \caption{Illustration of iterated chain of $\O_n$s for $n=2$. Correspondingly, the values of $m$ and $p$ are specified as in Corollary \ref{weakhomotopy}.} 
  \label{wedgeiter}
\end{figure}
\end{remark}

 \section{Characterization of minimal $2$-cycles and two-dimensional closed planar-Rips structures} \label{sec4}

In this section, we will prove Theorem \ref{C}, laying the groundwork for recognizing two-dimensional, pure and closed planar-Rips structures, a more general class of $2$-pseudomanifolds. We will achieve this by first establishing its inherent local structure and showing that a minimal 2-cycle admitting a planar-Rips structure is isomorphic to $\O_2$.

\begin{definition} 
\begin{enumerate}[label=(\alph*)]
    \item A finite $n$-dimensional pure simplicial complex is said to be closed if every simplex of codimension $1$ has degree at least $2$ i.e., every $(n-1)$-simplex is contained in at least two $n$-simplices.
    \item A finite $n$-dimensional pure simplicial complex is said to be a \textbf{minimal $n$-cycle} if the only subcomplex having $n^{th}$ homology of rank $1$ is itself.
\end{enumerate}
\end{definition}
One can easily verify that minimal $n$-cycles are strongly connected and closed.

\begin{lemma} \label{degenerate}
    For $X \subset \R^2$, if the planar-Rips complex $\RR(X)$ is an $n$-dimensional pure and closed simplicial complex ($n\geq 2$), then every simplex is non-degenerate, meaning that no vertex in a simplex is contained within the convex hull of its remaining vertices in the shadow complex $\Sc(X)$.
\end{lemma}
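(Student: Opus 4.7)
The plan is to argue by contradiction. Suppose there exists a simplex $\sigma$ in $\RR(X)$ and a vertex $v \in \sigma$ with $v \in \conv(\sigma \setminus \{v\})$; from this I will manufacture an $(n+1)$-simplex in $\RR(X)$, contradicting $\dim \RR(X) = n$.

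I would first apply Carathéodory's theorem in $\R^2$ to find vertices $a_1, a_2, a_3 \in \sigma \setminus \{v\}$ (not necessarily distinct) with $v \in \conv\{a_1, a_2, a_3\}$. By purity, $\sigma$ is contained in some $n$-simplex $\tau$, which therefore contains $v, a_1, a_2, a_3$. Since the complex is closed, the $(n-1)$-face $\tau \setminus \{v\}$ lies in a second $n$-simplex $\tau' = (\tau \setminus \{v\}) \cup \{w\}$ for some vertex $w \neq v$. In particular $a_1, a_2, a_3 \in \tau' \setminus \{w\}$, so $\{a_1, a_2, a_3\} \subseteq \Lk(w)$.

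The key step is then a single invocation of \Cref{triangle} with the $0$-simplex $\{w\}$ and the set $A = \{a_1, a_2, a_3\}$: since $v \in \conv A$, the lemma yields $v \in \Lk(w)$, i.e.\ $v$ is adjacent to $w$. But $v$ is already adjacent to every vertex of $\tau \setminus \{v\} = \tau' \setminus \{w\}$ because they all lie in the simplex $\tau$ together with $v$, hence $v$ is adjacent to every vertex of $\tau'$. The set $\tau' \cup \{v\}$ is therefore a pairwise-adjacent collection of $n+2$ vertices, and the flag property of the Vietoris-Rips complex $\RR(X)$ (a set of vertices spans a simplex iff its pairwise diameters are admissible) promotes this clique to an $(n+1)$-simplex, contradicting $\dim \RR(X) = n$.

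I do not foresee any serious technical difficulty: the content is exactly the combination of \Cref{triangle}, which is the designed mechanism for transferring adjacency across a planar convex hull, with the pure-and-closed hypothesis that guarantees the cofacet $\tau'$. The only care needed is the bookkeeping of the inclusions $\{a_1,a_2,a_3\} \subseteq \sigma \setminus \{v\} \subseteq \tau \setminus \{v\} = \tau' \setminus \{w\}$, so that the $a_i$ reliably sit in $\Lk(w)$ when \Cref{triangle} is applied.
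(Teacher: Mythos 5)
Your proposal is correct and follows essentially the same route as the paper: take a maximal $n$-simplex $\tau$ containing the degenerate simplex, use closedness to produce the second cofacet $\tau'=(\tau\setminus\{v\})\cup\{w\}$ of $\tau\setminus\{v\}$, and apply \Cref{triangle} to conclude $v\in\Lk(w)$, yielding a forbidden $(n+1)$-simplex via the flag property. The only cosmetic difference is your explicit Carath\'eodory reduction to three points $a_1,a_2,a_3$, which is unnecessary since \Cref{triangle} already accepts an arbitrary set $A$ (its own proof performs that reduction internally).
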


\begin{proof}
    Assume the contrary i.e., let there be a degenerate simplex, say $\sigma$ of dimension at least $2$. Then there exists a vertex $v$ which is contained in the convex hull of remaining vertices of $\sigma$. Let $\tau$ be a maximal simplex containing $\sigma$ then $v \in \conv((\tau) - v)$. Since $\RR(X)$ is closed, there exists another $n$-simplex $\gamma$ that does not contain $v$, such that $\gamma \neq \tau$ and $\tau -v \subset \gamma$. Now from Lemma \ref{triangle}, $v \cup \gamma$ is an $(n+1)$-simplex in $\RR(X)$ which is not possible.
\end{proof}

We will use the above proposition specifically for the case when $n=2$. This proposition also implies that for a pure and closed two-dimensional planar-Rips complex, the projection of each $n$-simplex in the shadow $\Sc(X)$ is an $(n+1)$-sided convex polygon. Consider all those $1$-simplices in $\RR(X)$, whose projected interiors in the shadow $\Sc(X)$ intersect the boundary of $\Sc(X)$. These $1$-simplices will be referred to as \textit{boundary edges}.

\begin{definition}[$\gamma_2$-configuration]
   A \textbf{$\pmb{\gamma_2}$-configuration} is an induced subcomplex of $\RR(X)$ spanned by four vertices, which contains precisely two $2$-simplices, and the edge shared by them lies on one side of the line joining the two non-adjacent vertices in the shadow $\Sc(X)$.
    \begin{figure}[h!]
    \centering \includegraphics[scale=0.45]{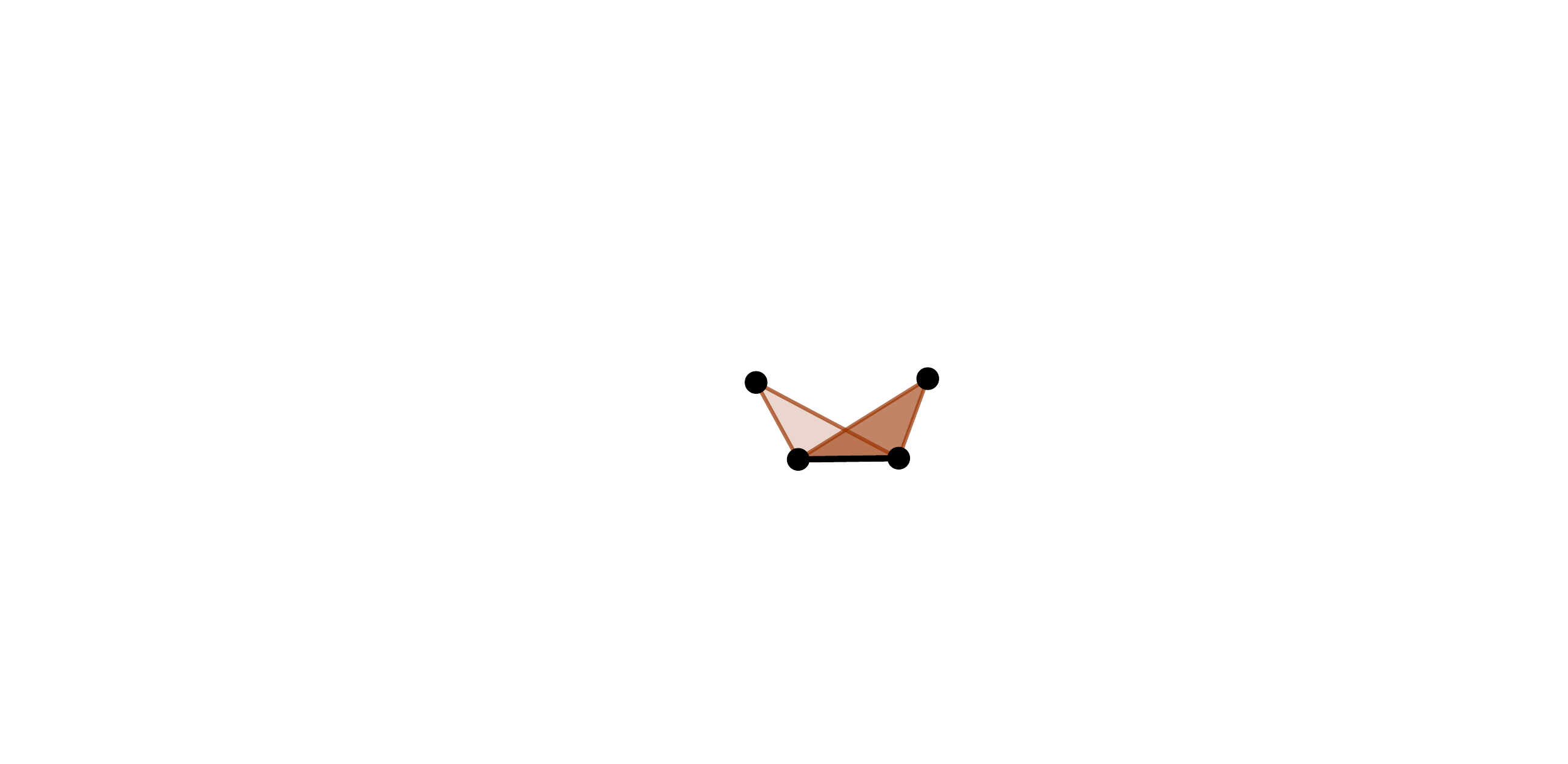} 
  \caption{An illustration of $\gamma_2$-configuration; The highlighted edge (in black) denotes the base of the configuration.} \label{gamma2}
\end{figure}
\end{definition}
Additionally, the edge($1$-simplex) shared between two $2$-simplices in the $\gamma_2$-configuration will be referred to as the \textbf{\textit{base}} of the configuration. The following result is a simple consequence of Lemma \ref{degenerate}.

\begin{proposition} 
In a finite two-dimensional, pure and closed planar-Rips complex $\RR(X)$, 

 \begin{thmenum}

    \item Every boundary edge is a base of some $\gamma_2$-configuration. \label{bedge1}
    \item If two edges intersect at their projected interiors, then they induce a $\gamma_2$-configuration.\label{bedge2}
     
 \end{thmenum} \label{bedge} 
\end{proposition} 
\begin{proof}
    Since $\RR(X)$ is closed, consider any two $2$-simplices containing the boundary edge; their projected interiors intersect in the shadow $\Sc(X)$. There are two possibilities: either one of the two simplices is degenerate or they together form a $\gamma_2$-configuration. However, from Lemma \ref{degenerate}, both the simplices are non-degenerate. This proves $(i)$.

       \begin{figure}[h!]
       \centering
  \begin{subfigure}[b]{0.25\textwidth}
    \centering \includegraphics[scale=0.25]{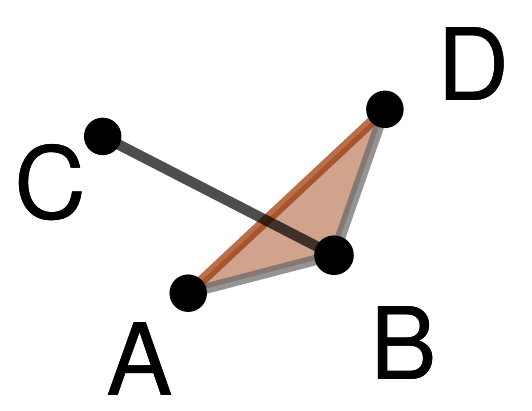}
    \caption{} \label{abcd_gamma}
  \end{subfigure}
  \begin{subfigure}[b]{0.25\textwidth}
    \centering 
    \includegraphics[scale=0.25]{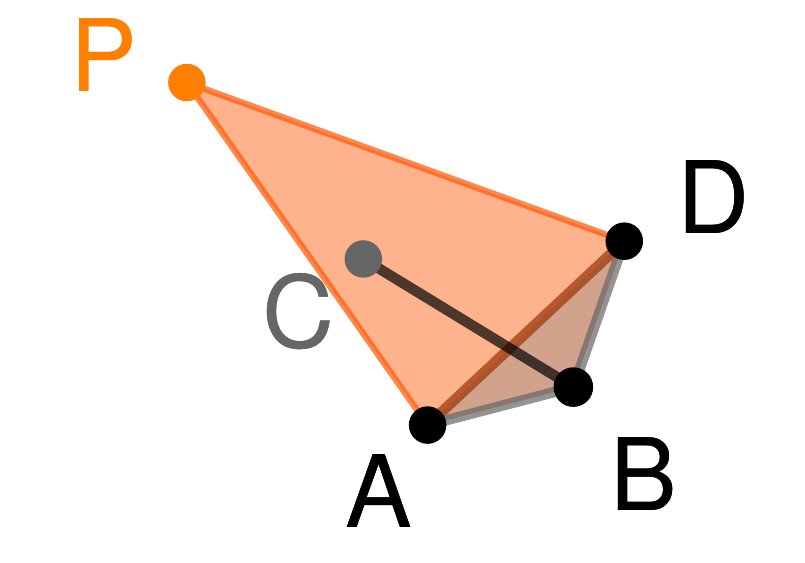}
        \caption{} \label{forbid_gamma}
  \end{subfigure}
    \begin{subfigure}[b]{0.25\textwidth}
    \centering 
    \includegraphics[scale=0.25]{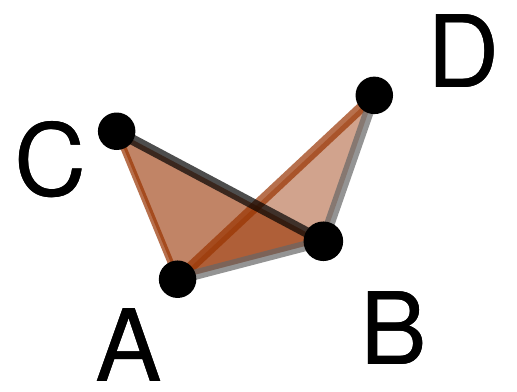}
        \caption{} \label{abcd_gamma1}
  \end{subfigure}
  \caption{Illustration of all possible and forbidden cases in the proof of Proposition \ref{bedge2}.} 
\end{figure}

    For (ii), suppose $[AD]$ and $[BC]$ are two edges in $\RR(X)$ whose projected interiors intersect. From Lemma \ref{cone}, assume without loss of generality that the vertex $B$ is adjacent to both $A$ and $D$ as illustrated in Figure \ref{abcd_gamma}. Now it suffices to show that the vertex $C$ is either adjacent to $A$ or $D$. Note that $C$ cannot be adjacent to both $A$ and $D$ since $\RR(X)$ is two-dimensional. Further from Lemma \ref{degenerate}, no three vertices of $A,B,C,D$ are collinear.
    
    On contrary, suppose $C$ is neither adjacent to $A$ nor $D$. Since $\RR(X)$ is closed, a vertex $P \in X$ exists which is adjacent to both $A$ and $D$, but not adjacent to $B$. It now follows from Lemma \ref{convhull} that $C \in \conv\left\{A,D,P\right\}$, as shown in Figure \ref{forbid_gamma}. Consequently, $C$ is adjacent to both $A$ and $D$ which is absurd.
    
\end{proof}    

\begin{corollary}
    Every finite two-dimensional, pure and closed planar-Rips complex $\RR(X)$ contains a $\gamma_2$-configuration.
\end{corollary}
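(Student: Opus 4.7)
The plan is to apply \Cref{bedge}$(i)$, so it suffices to exhibit one boundary edge of $\RR(X)$. I will do this by analyzing the topological boundary $\partial \Sc(X)$ of the shadow $\Sc(X) \subset \R^2$.

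First, observe that $\Sc(X)$ is compact (as $X$ is finite) and has non-empty interior: since $\RR(X)$ is pure and two-dimensional, it contains a $2$-simplex $\{a,b,c\}$, whose three vertices are non-collinear by \Cref{degenerate}, so the projected triangle has non-empty interior in $\R^2$. A bounded set with non-empty interior in $\R^2$ has non-empty topological boundary. Moreover, $\Sc(X)$ is a finite union of non-degenerate triangles, so $\partial \Sc(X)$ is a $1$-dimensional piecewise linear set contained in the union of the projected $1$-simplices of $\RR(X)$, with only finitely many points coming from vertex projections. Consequently, all but finitely many points of $\partial \Sc(X)$ lie in the relative interior of some projected $1$-simplex $[a,b]$; such an $[a,b]$ is a boundary edge by definition, and by \Cref{bedge}$(i)$ it is the base of a $\gamma_2$-configuration.

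The main (and essentially only) obstacle is rigorously identifying a boundary edge. A clean constructive alternative would be: pick an extreme vertex $v$ of $\mathrm{conv}(X)$, so that all neighbors of $v$ in $\RR(X)$ lie in a closed half-plane bounded by a supporting line at $v$. Let $u$ be the angularly-leftmost such neighbor. Since $\RR(X)$ is pure $2$-dimensional, every $2$-simplex containing the edge $[v, u]$ must have its third vertex strictly on the right side of line $vu$; hence, for small enough $\varepsilon > 0$, points just to the left of $[v,u]$ inside $B(v, \varepsilon)$ lie outside every triangle containing $v$, and also outside all simplices not containing $v$ (which are at positive distance from $v$). This confirms that the interior of the projected edge $[v, u]$ meets $\partial \Sc(X)$, so $[v,u]$ is a boundary edge and \Cref{bedge}$(i)$ applies.
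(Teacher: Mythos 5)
Your proof is correct and follows the route the paper intends: the corollary is stated as an immediate consequence of \Cref{bedge}$(i)$, with the existence of a boundary edge left implicit. Both of your arguments for producing such an edge (the topological one via $\partial\Sc(X)$ being nonempty and covered by projected $1$-simplices, and the constructive one via an extreme vertex of $\conv(X)$, where \Cref{degenerate} rules out the collinear degeneracies) are sound and usefully make that implicit step rigorous.
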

Note that the above proposition is an extended version of Lemma \ref{cone} for the case when $\RR(X)$ has a two-dimensional, pure and closed simplicial structure. Here, any two segments when they intersect, precisely yield a $\gamma_2$-configuration.
\begin{lemma} \label{obtuse}
    Let $\RR=\sub{ABCD}$ be a $ \gamma_2$-configuration containing the simplices $[ABC]$ and $[ABD]$ such that the points $C$ and $D$ are non-adjacent, and the segments $\overline{AD}$ and $\overline{BC}$ intersect. Then $\phase{CAB}+ \phase{ABD}> 180^\circ$ i.e.,  at least one of   $\phase{CAB}$ or $\phase{ABD}$ is obtuse.
\end{lemma}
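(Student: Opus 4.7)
The plan is to work in coordinates with $CD$ along the $x$-axis and show that the interior angles of the resulting convex quadrilateral at $C$ and $D$ are each strictly less than $90^\circ$; the desired inequality then follows from the $360^\circ$ angle sum of the quadrilateral.

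First I would set $C=(0,0)$ and $D=(r,0)$ with $r=|CD|\ge 1$, and use the $\gamma_2$-hypothesis (segment $AB$ lies on one side of line $CD$) to take $A_y,B_y>0$. The conditions $|AC|<1$ and $|AD|<1$ place $A$ in the open lens cut out by the two unit disks at $C$ and $D$. Evaluating the disk inequality around $D$ at $x=r-1$ yields $y^2<0$, which is impossible, so any point in the lens has $A_x>r-1$. Since $r\ge 1$, this forces $A_x>0$. The symmetric argument, using $|BC|<1$ and $|BD|<1$ at $x=1$, gives $B_x<1\le r$, so $r-B_x>0$.

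Next I would convert these positional bounds into angle bounds at $C$ and $D$. From $\vec{CA}\cdot\vec{CD}=r\,A_x>0$ we obtain $\angle ACD<90^\circ$, and from $\vec{DB}\cdot\vec{DC}=r(r-B_x)>0$ we obtain $\angle BDC<90^\circ$.

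Finally I would use the crossing hypothesis to pin down the convex-hull structure. If any one of $A,B,C,D$ were strictly inside the triangle of the other three, any segment from a vertex of that triangle to the interior point would stay inside the triangle and could not cross the opposite side, contradicting the fact that $\overline{AD}$ meets $\overline{BC}$. Hence the four points are extreme points of a convex quadrilateral whose diagonals are exactly $\overline{AD}$ and $\overline{BC}$, which forces the cyclic order to place $A$ opposite $D$ and $B$ opposite $C$. The four interior angles of this quadrilateral are therefore $\angle CAB,\angle ACD,\angle BDC,\angle ABD$, summing to $360^\circ$, so
\[
\angle CAB+\angle ABD \;=\; 360^\circ-\angle ACD-\angle BDC \;>\; 180^\circ.
\]
The step that needs most care is the identification of the cyclic order: in either of the two mirror-image possibilities ($A$ upper-left with $B$ upper-right, or vice versa) one must check that the four listed angles are the genuine interior angles of the convex quadrilateral and not their reflex versions, so that the $360^\circ$ sum really applies.
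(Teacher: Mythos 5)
Your proof is correct and follows essentially the same route as the paper's one-sentence argument: both exploit that in the convex quadrilateral $ABDC$ the side $\overline{CD}$ has length at least $1$ while the diagonals $\overline{AD},\overline{BC}$ and the sides $\overline{CA},\overline{BD}$ have length less than $1$, which forces the interior angles at $C$ and $D$ to be acute, and the $360^\circ$ angle sum then gives the claim. The only difference is in how acuteness is obtained: you use a coordinate computation with the unit-disk lens (showing $A_x>r-1\geq 0$ and $B_x<1\leq r$), whereas the paper's ``longest side faces the largest angle'' comparison in the triangles $ACD$ and $BCD$ does the same job synthetically; your explicit verification of convex position and of which angles are the interior ones is a welcome supplement to the paper's terse justification.
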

\begin{proof}
    This simply follows with the argument that the length of side $\overline{CD}$ is larger than the diagonal lengths $\overline{AD}$ and $\overline{BC}$ in the quadrilateral $ABDC$.
\end{proof}
Now we show that every closed two-dimensional planar-Rips complex has a copy of $\O_2$ as an induced subcomplex. The strategy employed is to use the `closed'ness property of $\RR(X)$ to add two more vertices along with the $\gamma_2$-configuration, and then use the proximity relations on these six vertices to establish the precise adjacency, proving that the induced subcomplex on these six vertices is isomorphic to $\O_2$.  
\begin{theorem} \label{minimal}
    For any $X \subset \R^2$, if the planar-Rips complex $\RR(X)$ is a two-dimensional pure and closed simplicial complex then $\RR(X)$ has an induced subcomplex isomorphic to $\O_2$. Hence, the second Betti number $b_2(\RR(X))$ is non-zero.
\end{theorem}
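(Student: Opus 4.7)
The plan is to exhibit six vertices of $X$ whose induced subcomplex is isomorphic to the octahedron boundary $\mathcal{O}_2$. First, I would invoke the corollary to \Cref{bedge} to secure a $\gamma_2$-configuration on four vertices $A, B, C, D$ with $2$-simplices $[ABC], [ABD]$ and non-adjacent pair $\{C, D\}$, chosen so that the four points are in convex position with $\overline{AD}$ and $\overline{BC}$ crossing (which may be ensured, e.g., by starting from a pair of edges whose projected interiors intersect, as in \Cref{bedge}(ii), or from a boundary edge via \Cref{bedge}(i)). Then \Cref{obtuse} applies and, without loss of generality, $\widehat{CAB}$ is obtuse; in particular $d(B, C) > d(A, C)$, so $C$ sits ``behind'' $A$ relative to the segment $\overline{AB}$.

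Next, using the closedness hypothesis I would introduce two additional vertices. Closedness applied to the $1$-simplex $[BC]$, which already lies in $[ABC]$, yields another $2$-simplex $[BCE]$ with $E \neq A$; the two-dimensionality of $\RR(X)$ precludes $E$ from being adjacent to $A$ (otherwise $\{A,B,C,E\}$ would span a $3$-simplex). Symmetrically, closedness applied to $[AD]$ produces a vertex $F \neq B$ with $[ADF]$ a $2$-simplex and $F$ non-adjacent to $B$. The crux is then the geometric verification that $d(E, D) < 1$, $d(F, C) < 1$, and $d(E, F) < 1$, after which the adjacencies already secured make $\{A,B,C,D,E,F\}$ the vertex set of an induced $\mathcal{O}_2$ with antipodal pairs $\{C,D\}, \{A,E\}, \{B,F\}$. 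The bound $b_2(\RR(X)) \neq 0$ then follows because $\mathcal{O}_2 \simeq \s^2$ supports a $2$-cycle that cannot be a boundary in $\RR(X)$ (no $3$-simplices exist).

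For the geometric steps I would rely on \Cref{convhull}, applied to the intersecting pair $\overline{AD}, \overline{BC}$ from the $\gamma_2$-configuration, together with the half-plane toolkit developed in \Cref{sec3} (position of a point relative to a line, and the perpendicular-bisector half-planes $\mathcal{P}_{a<b}$). The constraints $d(E, B), d(E, C) < 1$ and $d(E, A) \geq 1$ confine $E$ to a thin crescent-like region on the side of $B$ opposite from $A$; the obtuseness at $\widehat{CAB}$, which controls how $C, A, B, D$ sit in the plane, should then trap this crescent inside the unit disk around $D$, yielding $d(E, D) < 1$. A symmetric argument places $F$ inside the unit disk around $C$. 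For $E$-$F$ adjacency, I would show that $\overline{EF}$ is forced to cross one of $\overline{AB}$, $\overline{AD}$, or $\overline{BC}$ in the shadow $\Sc(X)$, allowing \Cref{cone} to deliver the desired edge.

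The main obstacle I expect is the step showing $d(E, D) < 1$ in full generality, because the single obtuseness assumption on $\widehat{CAB}$ is inherently asymmetric: the region available to $E$ may in principle be large enough to escape the $D$-disk unless additional constraints are extracted from the $\gamma_2$-geometry. I anticipate needing either a case split on whether $\widehat{ABD}$ is also obtuse, or an auxiliary application of closedness on another edge (producing a secondary $\gamma_2$-configuration whose intersecting diagonals feed directly into \Cref{convhull}) to pin down $E$'s location sufficiently; an analogous refinement would be needed for $F$.
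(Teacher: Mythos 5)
Your construction is exactly the paper's: the same starting $\gamma_2$-configuration on $A,B,C,D$, the same two auxiliary vertices obtained by applying closedness to the crossing edges $[BC]$ and $[AD]$ (your $E$ and $F$ are the paper's $Q$ and $P$), and the same intended antipodal pairing $\{C,D\},\{A,E\},\{B,F\}$. The toolkit you name --- \Cref{convhull}, \Cref{obtuse}, the half-plane and perpendicular-bisector regions --- is also the one the paper uses. However, the part you defer as ``the crux'' is essentially the entire content of the proof, and your proposal does not close it. The paper does so with two localization claims you would need to reproduce: first, that $C$ and $D$ must lie on \emph{opposite} sides of the bisector $L_{A|B}$ (proved by applying \Cref{convhull} to $E$, which is closer to $B$ and $C$ than to $A$, to force $D\in\conv\{B,C,E\}$ and hence the contradiction $d(C,D)<1$ otherwise); and second, that both new vertices lie in $L^+_{AD}\cap L^-_{BC}$, the region between the two crossing edges (proved by excluding the complementary regions via \Cref{chull} and \Cref{bedge}). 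These two facts, not the obtuseness of $\widehat{CAB}$ alone, are what trap $E$ and $F$ inside the unit disks around $D$ and $C$ respectively; your suspicion that the single obtuseness hypothesis is insufficient is correct, and the case split you contemplate is not the route the paper takes.

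One concrete step in your sketch would fail as stated: for the $E$--$F$ adjacency you propose to show that the segment $\overline{EF}$ crosses one of $\overline{AB}$, $\overline{AD}$, $\overline{BC}$ and then invoke \Cref{cone}. But \Cref{cone} requires \emph{both} crossing segments to be edges of $\RR(X)$, and $\overline{EF}$ is precisely the edge whose existence you are trying to establish. The correct move, which the paper makes, is to show that the already-known edges $\overline{EC}$ and $\overline{FD}$ cross (a consequence of the localization $E,F\in L^+_{AD}\cap L^-_{BC}$ together with $E\notin\conv\{A,D,F\}$ and $F\notin\conv\{B,C,E\}$); then \Cref{bedge}(ii) yields a $\gamma_2$-configuration on $\{E,C,F,D\}$ whose unique non-adjacent pair must be $\{C,D\}$, which delivers $E$ adjacent to $F$, $E$ adjacent to $D$, and $F$ adjacent to $C$ simultaneously.
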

\begin{proof}
   We will repeatedly use Proposition \ref{bedge2} to prove this theorem. Firstly, as follows from this proposition, we start by taking a $ \gamma_2$-configuration, say $\sub{ABCD}$ as an induced subcomplex in $\RR(X)$ where the simplices $[ABC]$ and $[ABD]$ are in $\RR(X)$, the points $C$ and $D$ are non-adjacent, and the segments $\overline{AD}$ and $\overline{BC}$ intersect. Note that the points $C$ and $D$ are on the same side of the line $L_{AB}$ i.e., without loss of generality, assume $C,D \in L^+_{AB} $. 
   
   Now, since $\RR(X)$ is closed, there exists at least another point for each of $[AD]$ and $[BC]$, say $P$ and $Q$ respectively, such that the two simplices $[ADP]$ and $[BCQ]$ are in $\RR(X)$. Note that $P\neq Q$  and the points $A$ and $Q$ are non-adjacent since $\RR(X)$ is two-dimensional. Likewise, the points $B$ and $P$ are non-adjacent. Hence, the point $P$ lies on the same side of the line of perpendicular bisector $L_{A|B}$ as $A$ does i.e., $P \in L^+_{A|B} $ and the point $Q$ lies on the same side of $L_{A|B}$ as $B$ does i.e., $Q \in L^-_{A|B} $.
   Using Lemma \ref{obtuse}, we also assume without loss of generality that $\phase{CAB}$ is obtuse.
    
\begin{customclaim}{1} \label{c1}
    The points $C$ and $D$ lie on either side of the bisector $L_{A|B}$.
\end{customclaim}
\begin{proof}\renewcommand{\qedsymbol}{}
        \begin{figure}[h!]
    \centering \includegraphics[scale=0.15]{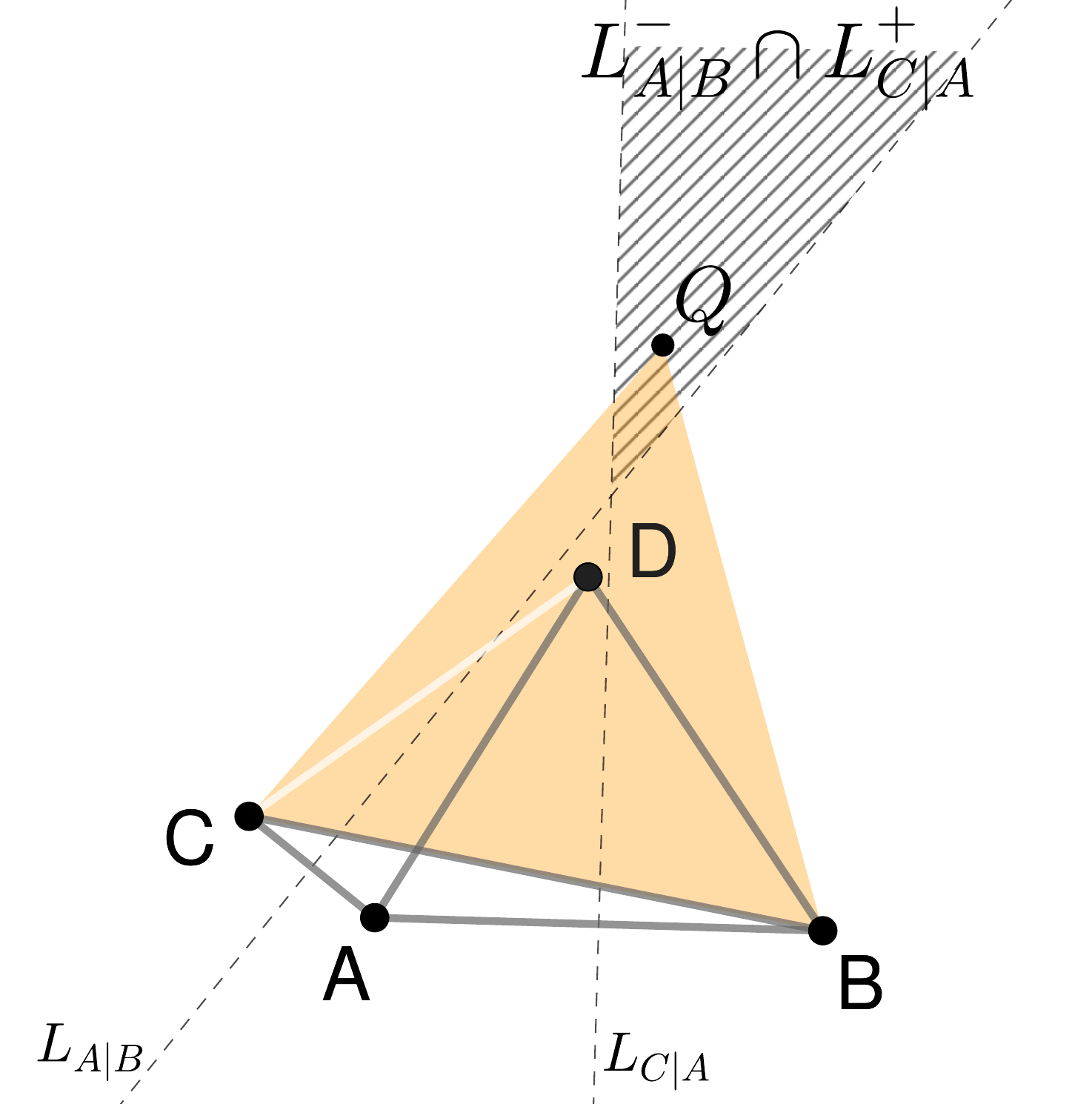}

  \caption{An illustration of Claim \ref{c1}: Here, the points $C$ and $D$ are taken to be on the left side of the bisector $L_{A|B}$. Corresponding to this arrangement, $Q$ lies in the hatched region, implying that $D \in \conv\left\{B,C,Q\right\}$, as represented by the solid shaded region.}\label{same_side}.  
\end{figure}
    Let if possible, both $C$ and $D$ be on the same side of line $L_{A|B}$ i.e., let $C,D \in L^+_{A|B} $(since $\phase{CAB}$ is obtuse). Consider the set $L^-_{A|B} \cap L^+_{C|A} $; the point $Q$ being closer to both $B$ and $C$ than $A$ implies $Q \in L^-_{A|B} \cap L^+_{C|A} $. This is illustrated in Figure \ref{same_side}. Hence from Lemma \ref{convhull}, $D \in \conv\left\{B,C,Q\right\}$ i.e.,  $C$ and $D$ are adjacent which is not true. This concludes the proof of Claim \ref{c1}.
   
\end{proof}
Hence, $C \in L^+_{A|B}$ and $D \in L^-_{A|B}$.  With the above inference about points $C$ and $D$, it is possible to draw further conclusion about the specific region in which points $P$ and $Q$ should lie.

   \begin{customclaim}{2} \label{c2}
       $P, \ Q \in L^+_{AD} \cap L^-_{BC}$.
   \end{customclaim}
\begin{proof} \renewcommand{\qedsymbol}{}
 \begin{figure}[h!]
 \centering
  \begin{subfigure}[b]{0.45\textwidth}
    \centering \includegraphics[scale=0.16]{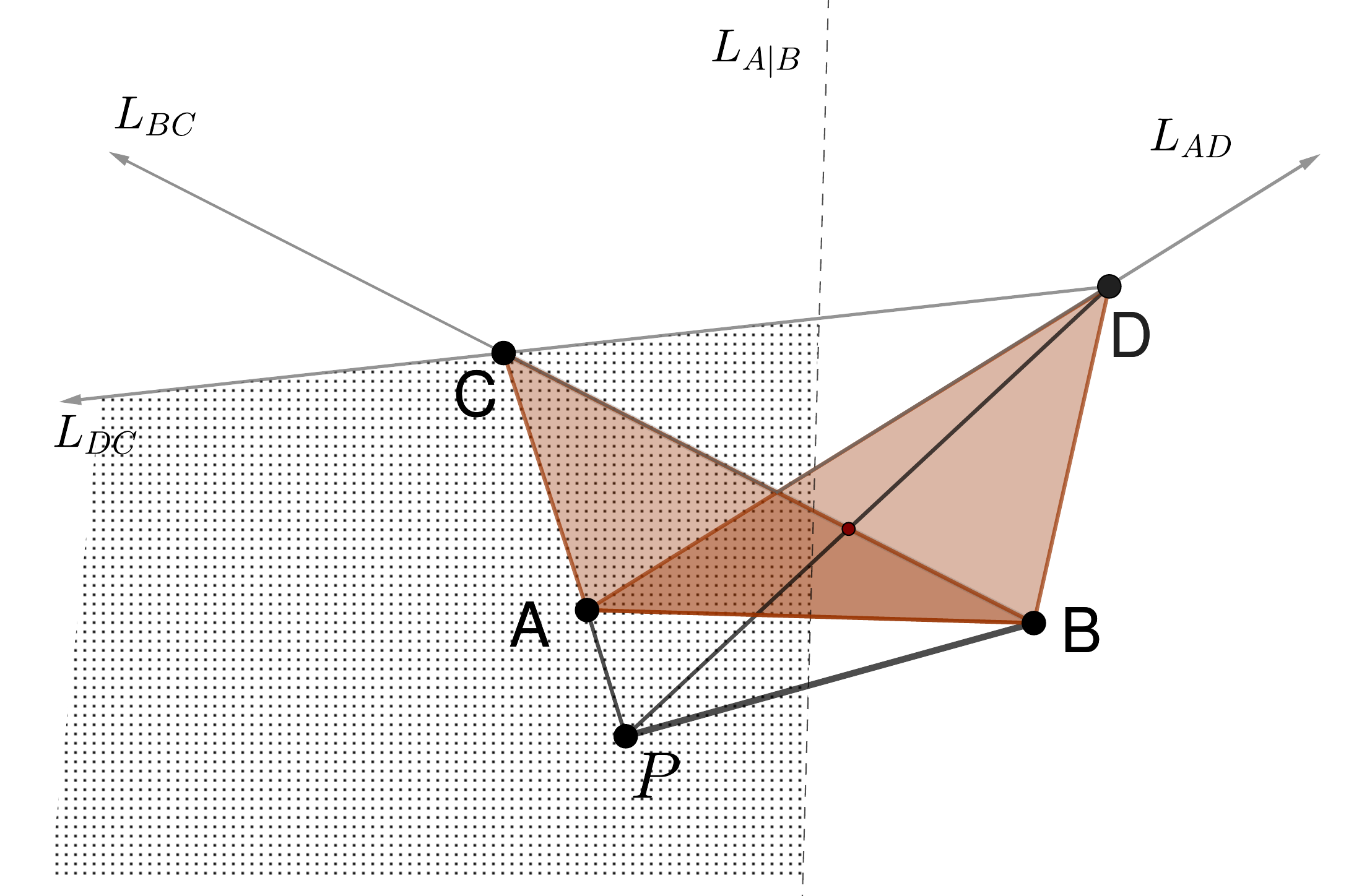}
    \caption{$P \in L^+_{A|B} \cap \ \mathrm{cl}(L^+_{BC}) \cap L^-_{DC}$} \label{m1}
  \end{subfigure}
  \begin{subfigure}[b]{0.45\textwidth}
    \centering 
    \includegraphics[scale=0.14]{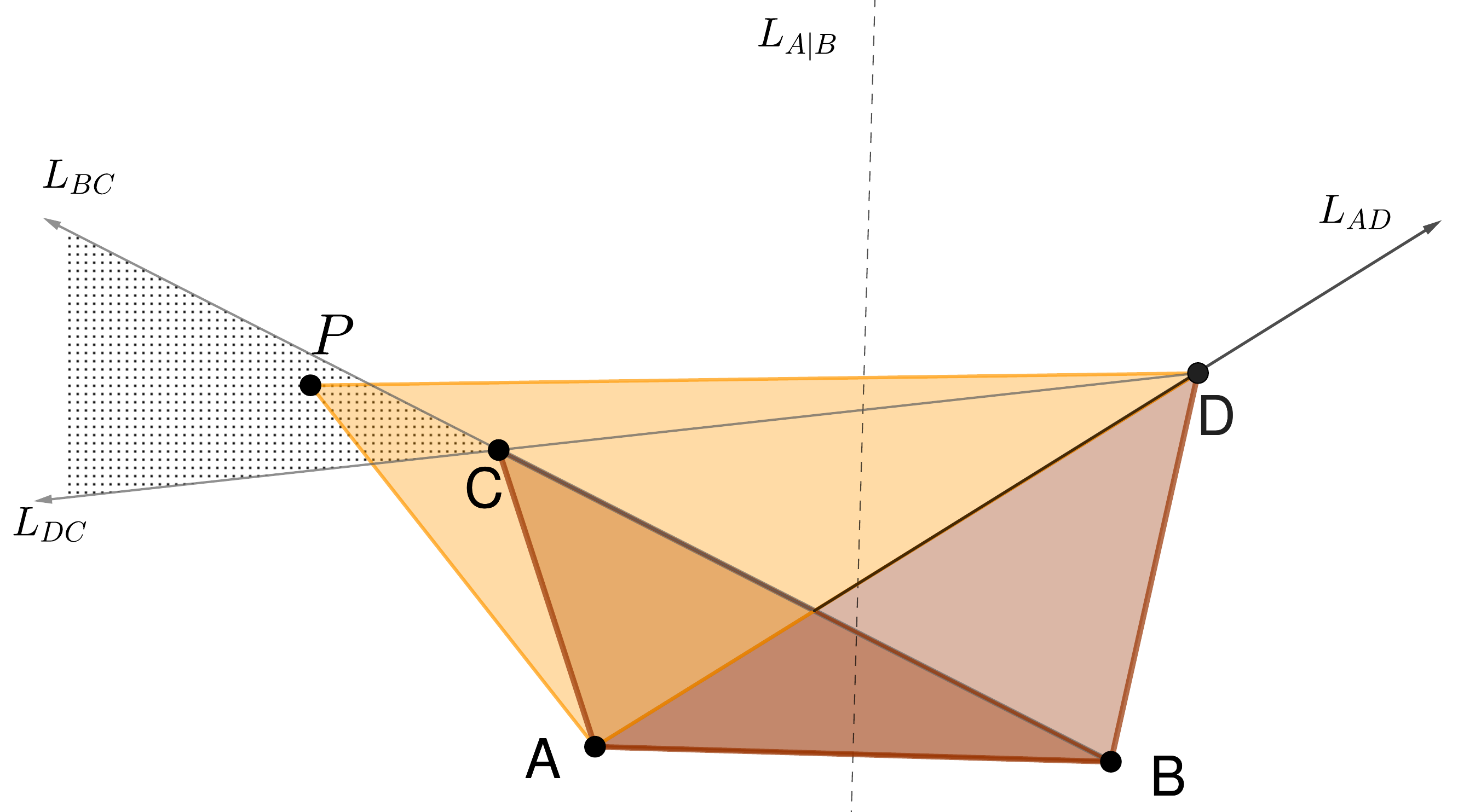}
        \caption{$P \in L^+_{A|B} \cap \ \mathrm{cl}(L^+_{BC}) \cap L^+_{DC}$} \label{m2}
  \end{subfigure}
\end{figure}
    We have $P \in L^+_{A|B}$. Now for any point $Z \in L^+_{A|B} \cap \ \mathrm{cl}(L^+_{BC})$, there are two possibilities:
    \begin{enumerate}[label=(\alph*)]
        \item  If $Z \in L^-_{DC}$ then $C \in conv \left\{A,D,Z\right\}$ (Lemma \ref{chull}).
        \item If  $Z \in L^+_{DC}$ then the segments $\overline{ZD}$ and $\overline{BC}$ intersect and from  Proposition \ref{bedge2}, $Z$ is adjacent to $B$, since $C$ and $D$ are non-adjacent.
        \end{enumerate}
        In particular, if $P\in L^+_{A|B} \cap (L_{BC} \cup L^+_{BC})$ then either $C$ is adjacent to $D$ or $P$ is adjacent to $B$, neither of which is true. This is illustrated with an example in Figure \ref{m1} and Figure \ref{m2}.  Hence, $P \in L^-_{BC}$. With analogous arguments, $ Q \in L^+_{AD}$. Since $P \in L^+_{A|B} \cap L^-_{BC} \subset L^+_{AD}$ and $Q \in L^-_{A|B} \cap L^+_{AD} \subset L^-_{BC} \ $, therefore $P, \ Q \in L^+_{AD} \cap L^-_{BC}$. Hence, Claim \ref{c2} is true.
    
\end{proof}
   
   \begin{figure}[h!]
   \centering
  \begin{subfigure}[b]{0.55\textwidth}
    \centering \includegraphics[scale=0.44]{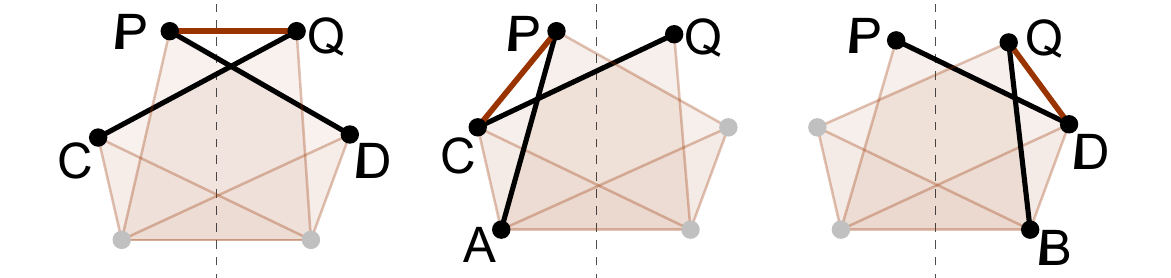}
    \caption{}
  \end{subfigure}
  \begin{subfigure}[b]{0.44\textwidth}
  \centering
    \includegraphics[scale=0.44]{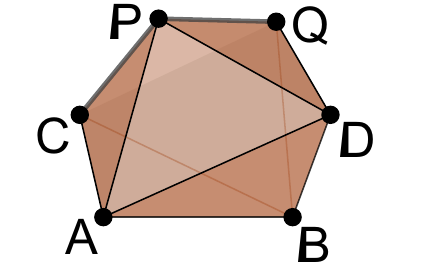}
    \caption{}
  \end{subfigure}
  \caption{ On the left, the intersection of segments (in black) in each case is possible  since $P,Q \in L^+_{AD} \cap L^-_{BC}$. On the right, the final induced complex obtained is isomorphic to $\O_2$} \label{crossings}
\end{figure}

   Now it remains to show that $P$ and $Q$ are adjacent, and both are adjacent to $C$ and $D$ which proves that the induced subcomplex $\sub{ABCDPQ}$ is isomorphic to $\O_2$. Consider the line segments $\overline{PD}$ and $\overline{QC}$; From the above observations about the region where points $P$ and $Q$ lie, the line segments $\overline{PD}$ and $\overline{QC}$ intersect since $P \notin conv \left\{ B,C,Q \right \}$ and $Q \notin conv \left\{ A,D,P \right \}$.  Likewise, the line segments $\overline{PA}$ and $\overline{QC}$ intersect, and the line segments $\overline{PD}$ and $\overline{QB}$ intersect. Along with this, the pairs $\left\{A,Q\right\}$, $\left\{B,P\right\}$, and $\left\{C,D\right\}$ are non-adjacent pairs of vertices (See Figure \ref{crossings}). Therefore it follows from  Proposition \ref{bedge2} that the points $P$ and $Q$ are adjacent, points $P$ and $C$ are adjacent, and points $Q$ and $D$ are adjacent. This concludes the proof.
   
\end{proof}

\begin{corollary} \label{mincycle}
    For a finite subset $X \subset \R^2$, if $\RR(X)$ is a minimal $2$-cycle then it is isomorphic to $\O_2$.
\end{corollary}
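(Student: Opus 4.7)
The plan is to combine the structural result of \Cref{minimal} with the defining minimality property of a minimal $2$-cycle, so that essentially no new geometry is needed. As observed in the paragraph following the definition of minimal $n$-cycles, every minimal $2$-cycle is strongly connected and closed; in particular it is a pure, closed, two-dimensional simplicial complex, which places $\RR(X)$ squarely in the hypothesis of \Cref{minimal}.

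First I would apply \Cref{minimal} to $\RR(X)$ to obtain an induced subcomplex $L \subseteq \RR(X)$ that is isomorphic to $\O_2$. Since $\O_2$ is homeomorphic to $\s^2$, one has $H_2(L;\Z) \cong \Z$, i.e.\ $b_2(L) = 1$. Note that $L$ is a bona fide subcomplex of $\RR(X)$, namely the full subcomplex spanned by the six vertices produced in the proof of \Cref{minimal}.

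Next, invoking the defining property of a minimal $2$-cycle, the only subcomplex of $\RR(X)$ whose second homology has rank $1$ is $\RR(X)$ itself. Since $L$ is such a subcomplex, we must have $L = \RR(X)$, and hence $\RR(X) \cong \O_2$.

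The main work has already been absorbed into \Cref{minimal}, so there is no substantive geometric obstacle left: the corollary is a direct packaging of that theorem with the minimality hypothesis. The only subtlety worth checking is that the octahedron furnished by \Cref{minimal} is indeed taken as a subcomplex (not merely a subset of vertices carrying a richer structure), which is automatic because it is constructed as an \emph{induced} subcomplex and $\O_2$ is a flag complex on six vertices, so no additional simplices can exist among those vertices in $\RR(X)$.
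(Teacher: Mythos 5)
Your proposal is correct and is essentially the argument the paper intends: the corollary is stated as a direct consequence of \Cref{minimal}, obtained exactly as you describe by noting that a minimal $2$-cycle is pure, closed and two-dimensional, extracting the induced $\O_2$ with $b_2=1$, and invoking minimality to conclude that this subcomplex is all of $\RR(X)$.
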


\begin{remark}
    It can be seen that \rm{Theorem \ref{pseudorips} for $n=2$ case is now simply a consequence of Theorem \ref{minimal} since a pseudomanifold is a minimal cycle. In general, we like to know if Theorem \ref{minimal} can be extended even for higher dimensions.}
\end{remark}

\begin{question} \label{ncycle}
    Suppose $\RR(X)$ is a minimial $n$-cycle, then is it isomorphic to $\O_n$?
\end{question}
The proof of Theorem \ref{minimal} has a further implication that in a two-dimensional, pure and closed Rips complex, every $\gamma_2$-configuration  further extends to $\O_2$, and is contained in precisely one copy of $\O_2$ in $\RR(X)$. Hence, every $\gamma_2$-configuration is a part of precisely one non-trivial homology $2$-cycle. This further leads us to characterize all two-dimensional, pure and closed Rips complexes by considering the projection map $p\colon\RR(X) \to \Sc(X)$.
\begin{lemma} \label{fullbedge}
    In a finite, two-dimensional, pure and closed planar-Rips complex $\RR(X)$, the projection of every boundary edge is completely contained in the boundary of $\Sc(X)$.
\end{lemma}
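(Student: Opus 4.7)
The strategy is proof by contradiction combining the unique $\O_2$-extension of every $\gamma_2$-configuration (\Cref{minimal} together with the Remark following it) and the fact that a $2$-dimensional planar-Rips complex cannot contain a $K_4$ in its $1$-skeleton (any such $4$-clique has diameter less than $1$ and would therefore be a $3$-simplex). Let $[AB]$ be a boundary edge of $\RR(X)$ and suppose, for contradiction, that some interior point $r$ of $\overline{AB}$ lies in $\mathrm{int}(\Sc(X))$.

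I would first establish that every $2$-simplex of $\RR(X)$ containing the edge $[AB]$ has its third vertex on a single side of $L_{AB}$, which I may take to be $L^+_{AB}$. Otherwise triangles $[ABZ]$ and $[ABZ']$ with $Z\in L^+_{AB}$ and $Z'\in L^-_{AB}$ would jointly cover a full $2$-dimensional neighborhood of each point of $\mathrm{int}(\overline{AB})$, forcing $\overline{AB}\subseteq\mathrm{int}(\Sc(X))$ and contradicting the boundary-edge hypothesis. Because $r\in\mathrm{int}(\Sc(X))$, the $L^-_{AB}$-side of $r$ is covered by some $2$-simplex $[UVW]$ of $\RR(X)$, and by the preceding observation this simplex cannot contain the edge $[AB]$. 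For $\triangle UVW$ to cover a $2$-dimensional neighborhood on the $L^-_{AB}$-side of $r$, the triangle---being non-degenerate by \Cref{degenerate}---must have one vertex $W$ strictly in $L^-_{AB}$ and another vertex $U_0\in\{U,V,W\}$ strictly in $L^+_{AB}$; note that $U_0,W\neq A,B$ automatically since $A,B\in L_{AB}$. Hence the edge $\overline{U_0W}$ of $\RR(X)$ crosses $L_{AB}$.

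I would then show that this crossing occurs in the interior of $\overline{AB}$. If instead the segment $L_{AB}\cap\triangle UVW$---which contains $r$---extended past $A$ or $B$, then $A$ or $B$ would lie in $\triangle UVW$; but since every point of a triangle is within $\diam\{U,V,W\}<1$ of each of its vertices, this would force $A$ or $B$ to be adjacent to all of $U,V,W$, producing a $K_4$ in the $1$-skeleton of $\RR(X)$, hence a $3$-simplex of the Vietoris-Rips complex, contradicting two-dimensionality. The degenerate collinear sub-cases, where a vertex of $[UVW]$ falls on $L_{AB}$ but differs from $A,B$, are ruled out by \Cref{degenerate}, since a simplex on three collinear points would fail non-degeneracy. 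Consequently $\overline{U_0W}$ and $\overline{AB}$ cross at projected interior points, and \Cref{bedge}$(ii)$ gives that $\sub{ABU_0W}$ is a $\gamma_2$-configuration.

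By \Cref{minimal} and the Remark following it, $\sub{ABU_0W}$ extends uniquely to a copy of $\O_2\subseteq\RR(X)$; by \Cref{polygon} its shadow $\Sc(\O_2)$ is a convex hexagon whose six vertices include $A,B,U_0,W$. Since $U_0$ and $W$ lie strictly on opposite sides of $L_{AB}$, the hexagon straddles $L_{AB}$, and consequently $A$ and $B$ cannot be consecutive in its cyclic vertex order---otherwise the hexagon would sit in one closed half-plane bounded by $L_{AB}$ and could not contain both $U_0$ and $W$. Therefore $\overline{AB}$ is a chord of this convex hexagon, so $\mathrm{int}(\overline{AB})\subseteq\mathrm{int}(\Sc(\O_2))\subseteq\mathrm{int}(\Sc(X))$, contradicting that $[AB]$ is a boundary edge. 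The main obstacle will be the middle step---producing the crossing edge $\overline{U_0W}$ and cleanly excluding the corner cases where a vertex of $[UVW]$ falls on $L_{AB}$ or where the $L_{AB}$-intersection of $\triangle UVW$ reaches $A$ or $B$---which is where \Cref{degenerate} and the $K_4$ obstruction do most of the work.
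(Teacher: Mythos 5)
Your proof is correct and ultimately runs on the same engine as the paper's --- produce two edges of $\RR(X)$ whose projected interiors cross, invoke \Cref{bedge}$(ii)$ to get a $\gamma_2$-configuration containing $[AB]$, extend it to a copy of $\O_2$ via \Cref{minimal}, and conclude that $\overline{AB}$ is a diagonal of the convex hexagon $\Sc(\O_2)$, hence interior to the shadow --- but you reach the pair of crossing edges by a genuinely different and more careful route. The paper simply asserts that a failure of the lemma yields two \emph{boundary} edges with intersecting projected interiors, which is the weakest step of its three-line proof; you instead start from a single boundary edge $[AB]$ with an interior point $r\in \mathrm{int}(\Sc(X))$, use purity to cover the $L^-_{AB}$-side of $r$ by a $2$-simplex not containing $[AB]$, and extract from it an edge $\overline{U_0W}$ crossing $\overline{AB}$; your second edge need not be a boundary edge at all. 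This buys a cleaner logical starting point and an explicit final contradiction, where the paper leaves ``this is not possible since $[AB]$ and $[CD]$ are boundary edges'' implicit. Two small points to tighten: first, ``the $L^-_{AB}$-side of $r$ is covered by some $2$-simplex'' should be argued by finiteness (some closed triangle contains $r$ together with points of $L^-_{AB}$ arbitrarily close to $r$); second, your dismissal of the sub-case where a vertex $P$ of $[UVW]$ lies on $L_{AB}$ with $P\neq A,B$ cites the wrong degenerate simplex, since $[UVW]$ need not be collinear there --- the contradiction instead comes from applying \Cref{degenerate} to the simplex $[ABP]$ when $P\in\mathrm{int}(\overline{AB})$ (or to $[UVA]$, $[UVB]$ when $A$ or $B$ falls inside $\overline{UV}$), together with your $K_4$ obstruction when $A$ or $B$ lands in $\conv\{U,V,W\}$ without being one of its vertices. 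Both repairs are routine and do not affect the architecture of your argument.
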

\begin{proof}
        Suppose not, then there exists two boundary edges in $\RR(X)$, say $[AB]$ and $[CD]$ whose projected interiors intersect in the shadow $\Sc(X)$ with $B$ and $C$ being non-adjacent. Now it follows from  Proposition \ref{bedge1} that the induced subcomplex $\sub{ABCD}$ is a $\gamma_2$-configuration, which further extends to $\O_2$ as shown in Theorem \ref{minimal}. But this is not possible since $[AB]$ and $[CD]$ are boundary edges in $\RR(X)$.
        
\end{proof}

\begin{proposition}

     A two-dimensional, pure and closed Rips complex $\RR(X)$ with $b_2(\RR(X))=1$ is isomorphic to $\O_2$.
\end{proposition}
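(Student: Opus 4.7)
The plan is to invoke the forthcoming \Cref{b0} (part~(b) of Theorem~C), which asserts that in a two-dimensional pure closed planar-Rips complex the number of induced copies of $\O_2$ equals the second Betti number $b_2$. Combined with \Cref{minimal}, taking $b_2 = 1$ gives a unique induced $\O_2$-subcomplex, which I denote by $\O$. Suppose for contradiction $\RR(X) \supsetneq \O$; since $\O$ is the induced subcomplex on its vertex set $V(\O)$, this forces some $v \in V(\RR(X)) \setminus V(\O)$.

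First I would argue that $v$ lies strictly outside the convex hexagonal shadow $\Sc(\O)$ (which is a hexagon by \Cref{polygon}). If $v \in \Sc(\O)$, Carath\'eodory's theorem places $v$ in the convex hull of at most three vertices $a,b,c$ of $V(\O)$ that span a simplex of $\O$. Writing $v = \alpha a + \beta b + \gamma c$ with $\alpha + \beta + \gamma = 1$ and applying the triangle inequality (strict, since pairwise distances among $a,b,c$ are $<1$) gives $d(v,a), d(v,b), d(v,c) < 1$, so $\{v,a,b,c\}$ is a $3$-simplex of $\RR(X)$, contradicting two-dimensionality (this mirrors Claim~1 in the proof of \Cref{pseudorips}).

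With $v$ strictly outside $\Sc(\O)$, I would mimic the strategy in the proof of \Cref{minimal}. Pure closedness forces $v$ into a triangle $[vxy]$, and the $1$-simplex $[xy]$ into a further triangle $[xyu]$ with $u \neq v$; the pair $(v,u)$ must be non-adjacent, else $\{v,x,y,u\}$ would be a $3$-simplex. Exploiting the position of $v$ outside the convex hexagon $\Sc(\O)$—and using \Cref{bedge}.(ii), which states that intersecting edges always induce a $\gamma_2$—I would locate a $\gamma_2$-configuration containing $v$. By the proof of \Cref{minimal}, every $\gamma_2$ extends uniquely to an induced $\O_2$-subcomplex; this yields an induced $\O_2 \subset \RR(X)$ containing $v$, necessarily distinct from $\O$, contradicting uniqueness.

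The principal obstacle lies in locating the $\gamma_2$-configuration through $v$: a priori, every pair of triangles at $v$ could sit in ``tent''-type configurations (opposite vertices on opposite sides of the shared edge), which fail to be $\gamma_2$'s. The resolution should use that the neighbors of $v$ in $V(\O)$ cannot contain any triangular face of $\O$ (else a $3$-simplex forms at $v$), together with the location of $v$ outside the convex hexagon $\Sc(\O)$, to force some pair of incident edges to cross and thereby produce the required $\gamma_2$ through \Cref{bedge}.(ii).
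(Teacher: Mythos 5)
Your proposal has a genuine gap at its decisive step. After reducing to the claim that no vertex $v$ of $\RR(X)$ can lie outside the unique induced copy $\O$ of $\O_2$, you must show that such a $v$ forces a second $\gamma_2$-configuration (hence, via \Cref{minimal}, a second induced $\O_2$). You explicitly label this ``the principal obstacle'' and only indicate what the resolution ``should use''; no argument is actually supplied. The worry you raise is the real one: closedness only guarantees that every edge incident to $v$ lies in at least two triangles, and if the two apexes always fall on opposite sides of the shared edge (your ``tent'' configurations), no $\gamma_2$ arises locally. Nothing in your sketch excludes an entire patch of such tent-type triangles attached to $\O$; excluding it requires a global argument --- essentially that a compact, crossing-free planar patch in which every edge has degree at least two cannot exist --- which the paper packages as \Cref{fullbedge} (every boundary edge projects entirely into $\partial\Sc(X)$) used together with \Cref{bedge}. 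You never invoke \Cref{fullbedge} nor provide a substitute for it, so the contradiction is not actually reached.

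Two smaller points. First, the paper proves this proposition \emph{before} \Cref{b0} and deduces it directly from \Cref{bedge} and \Cref{fullbedge}; your appeal to \Cref{b0}(ii) is therefore a forward reference --- admissible, since the proof of \Cref{b0} does not rely on this proposition, but it reverses the paper's logical order. Second, in the step showing $v\notin\Sc(\O)$, Carath\'eodory's theorem by itself only places $v$ in the convex hull of three vertices of $V(\O)$, not of three vertices spanning a simplex (they could include an antipodal pair); you should instead invoke the definition of the shadow (or \Cref{triangle}/\Cref{degenerate}) to obtain $v\in\conv(\Delta)$ for an actual $2$-simplex $\Delta$ of $\O$, after which your distance estimate is correct. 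These two points are repairable; the missing step in the first paragraph is the substantive defect.
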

\begin{proof}
    Since $\RR(X)$ contains a copy of $\O_2$ as an induced subcomplex, it suffices to show that there does not exist any other $2$-simplex outside or on the boundary edges of the induced subcomplex $\O_2$. But this follows from Proposition \ref{bedge} and Lemma \ref{fullbedge}.
\end{proof}

\begin{remark} \label{rep}
    For the planar-Rips structure of boundary complex of octahedron $\O_2$, consider its shadow $\Sc(\O_2)$ which is a convex hexagon under the projection map $p\colon\RR(X) \to \Sc(X)$. Then $p$ is injective on the boundary of $\O_2$, and the inverse image of every point in the interior of hexagon $\Sc(\O_2)$ has exactly two points in $\O_2$. More generally, it simply follows from Lemma \ref{degenerate} and Proposition \ref{bedge2} that for a $2$-dimensional pure and closed Rips complex $\RR(X)$, the inverse image of every point in the shadow $\Sc(X)$ has at most two points in $\RR(X)$. Alternatively from an algorithmic perspective, each copy of $\O_2$ can be identified as the tripartite graph $K_{2,2,2}$ (See Section \ref{multipartite}) in the graph of $1$-skeleton $\RR^{(1)}(X)$.
\end{remark}

\begin{proposition} \label{b0}
    In a finite, two-dimensional, pure and closed planar-Rips complex $\RR(X)$, denote $S$ to be set of all points in the shadow $\Sc(X)$ whose inverse image under the projection map $p\colon\RR(X) \to \Sc(X)$ has exactly two points. Then
    
    \begin{enumerate}[label=(\roman*)]
    \item $b_2(\RR(X))=b_0(S)$ i.e., the second Betti number $b_2(\RR(X))$=\# connected components in $S$. More specifically, $b_0$=Number of cycles of length $6$ in $\RR(X)$ which has at least two of its diagonals intersecting at their interiors.
    \item $\RR(X)$ has exactly $b_2(\RR(X))$ copies of $\O_2$ as induced subcomplex.
    \end{enumerate}
    
\end{proposition}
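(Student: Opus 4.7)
My plan is to use the projection map $p: \RR(X) \to \Sc(X)$, together with the observation from \Cref{rep} that $|p^{-1}(y)| \leq 2$ for every $y \in \Sc(X)$, so that $S$ measures exactly where $p$ folds. Since $\Sc(X)$ is a polygonal subcomplex of $\R^2$ and hence has $H_2 = 0$, all of $H_2(\RR(X))$ arises from this folding behavior over $S$.

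The first step is to set up a bijection between connected components of $S$ and induced copies of $\O_2$ in $\RR(X)$. Any $y \in S$ lies in the interiors of two distinct $2$-simplices with overlapping shadows; by \Cref{bedge}.$(ii)$ the four vertices involved form a $\gamma_2$-configuration, and by the observation following \Cref{minimal} this extends to a unique induced $\O_2 \subseteq \RR(X)$, with $y$ in the open hexagonal shadow of that $\O_2$. Conversely, \Cref{polygon} shows each induced $\O_2$ has a convex hexagonal shadow whose open interior is doubly covered by its four "upper" and four "lower" triangles. Distinct $\O_2$'s produce disjoint open hexagonal interiors, since a shared interior point would sit in four distinct $2$-simplices, contradicting $|p^{-1}(y)| \leq 2$. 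This already proves part (ii) and identifies each component of $S$ with a $6$-cycle of $\RR(X)$ whose diagonals intersect in their interiors, as stated in part (i).

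For the identity $b_2(\RR(X)) = b_0(S)$, enumerate the induced copies as $\O^{(1)}, \ldots, \O^{(m)}$ where $m = b_0(S)$, each giving a fundamental class $[z_i] \in H_2(\RR(X))$ since $\O^{(i)} \cong S^2$. I would apply Mayer--Vietoris to $\RR(X) = U \cup V$, where $U$ is a small open neighborhood of $\bigsqcup_i \O^{(i)}$ deformation retracting onto $\bigsqcup_i S^2$, and $V = \RR(X) \setminus \bigsqcup_i p^{-1}(\mathrm{int}\,\Sc(\O^{(i)}))$. By construction $p|_V$ is injective, embedding $V$ into the planar set $\Sc(X) \setminus \bigsqcup_i \mathrm{int}\,\Sc(\O^{(i)}) \subset \R^2$, so $H_2(V) = 0$. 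The intersection $U \cap V$ deformation retracts onto the disjoint union of hexagonal boundary cycles $\bigsqcup_i S^1$, so $H_2(U \cap V) = 0$. The relevant portion of the Mayer--Vietoris sequence then collapses to $0 \to H_2(U) \to H_2(\RR(X)) \to H_1(U \cap V)$, and since each hexagonal cycle bounds a hemisphere in $U$, the left map is split by sending $[z_i]$ to the generator of its $S^2$-summand. Tracing through gives $H_2(\RR(X)) \cong \Z^m$.

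The main obstacle is the disjointness claim in the first step: one must verify that if two open hexagonal interiors overlap at some $y$, then four \emph{pairwise distinct} $2$-simplices contain $y$, not merely two pairs with coincidences. This reduces to checking that two distinct copies of $\O_2$ cannot share a $2$-simplex (the three vertices of a shared triangle together with the three antipodal ones would force the octahedra to coincide), which in turn uses the uniqueness following \Cref{minimal}. Once this geometric input is in hand, the Mayer--Vietoris computation is mechanical.
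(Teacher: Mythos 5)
Your first half---identifying the connected components of $S$ with induced copies of $\O_2$ via \Cref{bedge}.(ii), \Cref{minimal} and the uniqueness observation following it, and checking disjointness of the hexagonal interiors through the bound $|p^{-1}(y)|\le 2$---is essentially the paper's own argument, and it correctly disposes of part (ii) and of the inequality $b_2(\RR(X))\ge b_0(S)$. Where you genuinely diverge is the reverse inequality. The paper obtains it in one line from \Cref{mincycle}: every minimal $2$-cycle is an $\O_2$, and since $H_2$ of a two-dimensional complex is generated by minimal $2$-cycles, each generator is already accounted for by a component of $S$. You instead run Mayer--Vietoris on $U\cup V$ with $V=p^{-1}\bigl(\Sc(X)\setminus S\bigr)$ embedded homeomorphically into the plane; this is more work, but it makes the ``folding'' picture explicit and avoids having to justify that minimal cycles generate $H_2$, so each route buys something.

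One loose end in your version: from the exact segment $0\to H_2(U)\to H_2(\RR(X))\to H_1(U\cap V)$ you must still show that the connecting map is zero, and your splitting remark (which only defines a retraction on the image of $H_2(U)$) does not accomplish this. What is needed is injectivity of $H_1(U\cap V)\to H_1(U)\oplus H_1(V)$. This does hold: the $m$ hexagonal boundary circles map under $p$ to the boundaries of $m$ pairwise disjoint holes of the planar set $\Sc(X)\setminus S$, where they are linearly independent in $H_1$ (detected by winding numbers about interior points of the holes), and $p|_V$ is a homeomorphism onto its image. But this step should be stated; without it you only get $b_2\ge m$. With it supplied, and with your flagged verification that two distinct induced copies of $\O_2$ cannot share a $2$-simplex (which indeed follows from the uniqueness of the $\O_2$ containing a given $\gamma_2$-configuration), your computation closes correctly.
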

\begin{proof}
    Suppose $T$ is any connected component of $S$, then there exists two edges in $\RR(X)$ whose projected interiors intersect in $T$. Hence from Proposition \ref{bedge2}, the two edges induce a $\gamma_2$-configuration, and from Theorem \ref{minimal}, this configuration further extends to an induced subcomplex of $\O_2$. Since the projected interior of the hexagon $\Sc(\O_2)$ is connected, $\Sc(\O_2) \subseteq T$. However, the map $p$ is injective on the boundary edges of $\O_2$ which implies $T=\Sc(\O_2)$ i.e., every connected component of $S$ corresponds to an induced subcomplex of $\O_2$ in $\RR(X)$. Hence, $b_2(\RR(X))\geq b_0(S)$. Further, since any minimal $2$-cycle in $\RR(X)$ is isomorphic to $\O_2$ (Corollary \ref{mincycle}), it follows that $b_2(\RR(X))= b_0(S)$. This proves (i). Now (ii) is a direct consequence of (i).
\end{proof}

\section{Correlations with unit disk graphs} \label{applications}

In this section, we introduce the concept of obstructions of planar-Rips complexes, an area that remains unexplored in literature. However, it's worth noting that the idea of ‘obstructions’ is not entirely novel \cite{obs1,obs2}. Armed with a wide spectrum of forbidden planar-Rips structures, we present a few examples that illustrate a potential framework for extracting obstructions of planar-Rips complex from these forbidden structures.

We begin with unifying the study of planar-Rips complexes and unit disk graphs by establishing a one-to-one correspondence between both the categories. The functors that facilitate this correspondence are ``taking clique complex'' and ``taking $1$-skeleton'', which transform a unit disk graph to a planar-Rips complex and vice versa, respectively. For simplicity, we will assume all the graphs to be connected.

\begin{definition}[Disk graph] A \textbf{disk graph} is an intersection graph of equal radius disks in the Euclidean plane.  Equivalently, a disk graph is a geometric graph where each vertex corresponds to a point in the Euclidean plane and an edge is drawn between two points if and only if the distance between them is less than a fixed threshold, say $\delta>0$.
\end{definition}

 Note that in case of $\delta =1$, they are commonly referred to as \textbf{unit disk graphs}. Planar-Rips complexes are flag complexes that exhibit a key property: the clique complex of the graph of its $1$-skeleton is same as the original complex. It is easy to see that every disk graph can be identified as the $1$-skeleton of a planar-Rips complex. In fact, the following equivalence can be inferred.

\begin{proposition} \label{isomorphism}
    
    Given any finite set $X \subset \R^2$, the planar-Rips complex of $X$ is same as the clique complex of the disk graph, having the vertex set as $X$. 
\end{proposition}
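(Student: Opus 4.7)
The plan is to observe that both the planar-Rips complex $\RR(X)$ and the clique complex of the unit disk graph on $X$ share the same vertex set $X$ and the same $1$-skeleton (which \emph{is} the unit disk graph $G(X)$ on $X$), so the claim reduces to verifying that $\RR(X)$ satisfies the flag property: every subset of $X$ whose pairwise $1$-skeleton lies in $\RR(X)$ is itself a simplex of $\RR(X)$.

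First I would unwind the definitions on the $1$-skeleton level. An edge $\{u,v\}$ lies in $\RR(X)$ iff $d(u,v) < 1$, which is precisely the adjacency condition in the unit disk graph $G(X)$. Hence $\RR^{(1)}(X) = G(X)$ as graphs on the vertex set $X$, and any clique of $G(X)$ has the same underlying vertex set as some candidate simplex of $\RR(X)$.

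Next I would verify the flag property for $\RR(X)$. A subset $\sigma \subseteq X$ is a simplex of $\RR(X)$ iff $\diam(\sigma) < 1$, and since the diameter of a finite point set equals the maximum pairwise distance, this is equivalent to requiring $d(u,v) < 1$ for every pair of distinct vertices $u,v \in \sigma$. Thus $\sigma \in \RR(X)$ precisely when every pair of vertices of $\sigma$ is already an edge of $\RR(X)$, i.e., precisely when $\sigma$ spans a clique in $\RR^{(1)}(X) = G(X)$.

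Combining these two observations gives the desired bijection between simplices of $\RR(X)$ and cliques of $G(X)$, so the two simplicial complexes coincide. There is no genuine obstacle here; the statement is essentially the fact that the Vietoris--Rips construction, being defined by a pairwise diameter condition, produces a flag complex whose clique structure is completely recorded by its $1$-skeleton.
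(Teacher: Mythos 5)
Your proposal is correct and matches the paper's (essentially unstated) justification: the paper simply observes that a Vietoris--Rips complex is a flag complex because the diameter condition is a pairwise condition, which is exactly the argument you spell out. The only cosmetic point is to use one consistent inequality convention ($<1$ versus $\leq 1$) on both the Rips and unit-disk-graph sides, which the paper itself treats interchangeably.
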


For simplicity, we shall fix $\delta=1$ given the scale-invariant nature of our study. Henceforth, we shall denote the unit disk graph on the vertex set  $X \subset \R^2$ by $\RR^1(X)$, the $1$-skeleton of planar-Rips complex.

Consequently, there is a one-to-one correspondence, implying that both the categories are isomorphic.  This correspondence facilitates the integration of both studies, enabling the translation of the statements and results in the category of planar-Rips complexes into corresponding analogs for the category of unit disk graphs, or vice-versa. We say a graph $G$ admits a unit disk graph structure if there exists an injective map $f \colon V(G) \to \R^2$ such that $G$ is isomorphic to the unit disk graph $\RR^1(f(V))$ (we will denote this by $\D G$). In other words,  $uv$ is an edge if and only if the distance between $f(u)$ and $f(v)$ is less than $1$ in $\D G$. 

\begin{definition}
\begin{enumerate}[label=(\alph*)]
    \item A clique is said to be \textbf{maximal} if it is not contained in a clique of larger cardinality. We will denote the cardinality of the largest clique in a graph $G$ by $\omega(G)$.
    \item A graph is said to be \textbf{pure} if all its maximal cliques are of same size. 
    
\end{enumerate}
        
\end{definition}

Note that the graph of $1$-skeleton of a simplicial complex can be pure although the simplicial complex need not be pure. A simple example to illustrate this is the complex $\Delta_2 \sqcup \partial \Delta_2$, where $\Delta_2$ is a $2$-simplex and $\partial \Delta_2$ represents the boundary of $2$-simplex. However, in case of flag complexes, it can be easily checked that a flag complex is pure if and only if the graph of its $1$-skeleton is pure. More specifically, a unit disk graph is pure if and only if the corresponding planar-Rips complex is pure.

 \subsection{Graph of cross-polytopes.} \label{multipartite}
 
 The graph of cross-polytopes appears most often as the induced subgraph in pure unit disk graphs (See Corollary \ref{induced}). The graph of $1$-skeleton of the boundary complex of an $n$-dimensional cross-polytope is isomorphic to a complete $n$-partite graph with each independent set of size $2$. For instance, the graph of octahedron is isomorphic to the complete tripartite graph $K_{2,2,2}$ (See Figure \ref{tripartite}.) Henceforth, we shall also refer to these graph as \textbf{complete bi-independent $n$-partite graph}.  We will denote this graph by $K_{n \times 2}$ as defined by Brouwer et al. \cite[p. 478]{brouwer}, to mean the complete $n$-partite graph $ K_{\underbrace{2,2,\dots, 2}_{n \text{ times} }}$.    
       \begin{figure}[h!]
    \centering \includegraphics[scale=0.23]{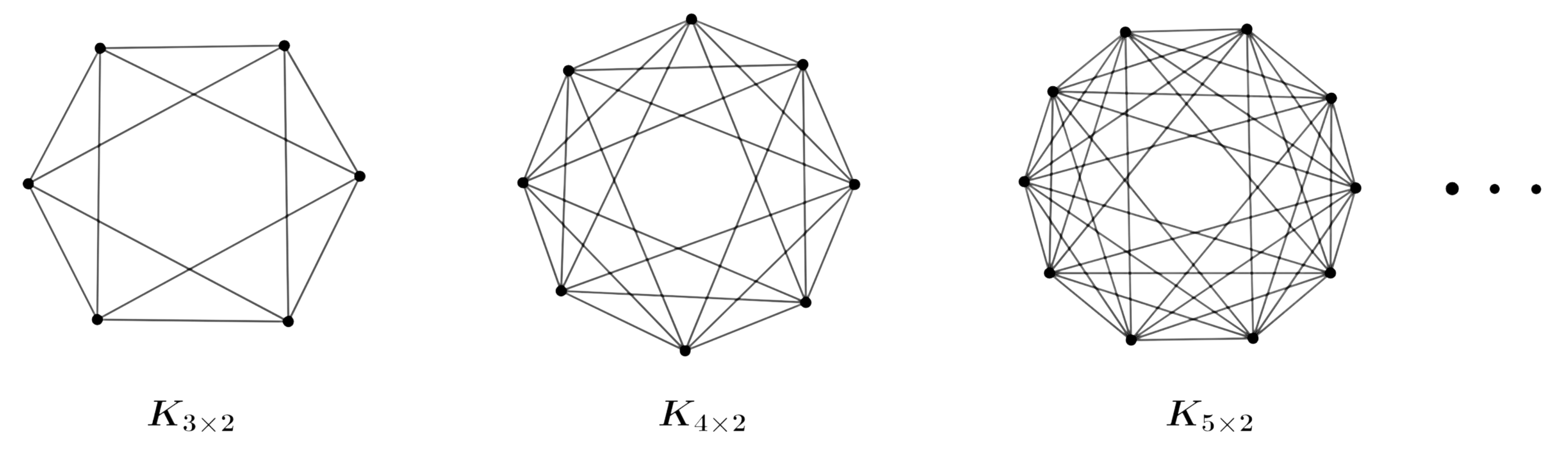}
    \caption{Complete bi-independent $n$-partite graph $K_{n \times 2}$}\label{tripartite}
\end{figure}

In the preceding sections, a complete characterization of planar-Rips complexes with $n$-dimensional weak-pseudomanifold structure and minimal $2$-cycle structure have been outlined, including a partial characterization of two-dimensional, pure, and closed planar-Rips complexes. These structures correspond to unit disk graphs having all maximal cliques of size $(n+1)$, and every clique of size $n$ contained in exactly two maximal cliques. Now we will state some of the corollaries which are natural graph analogs to these theorems proved in the previous sections.
\begin{corollary} \label{induced}
     Let $G$ be a connected graph having clique number $\omega(G)=3$ and every edge is contained in at least two maximal cliques. If $G$ admits a unit disk graph structure, then 
     \begin{enumerate}[label=(\roman*)]
    \item  $G$ contains an induced subgraph isomorphic to $K_{2,2,2}$.
    \item Given two edges $e_1$ and $e_2$ intersecting at their interiors, there exists precisely one copy of $K_{2,2,2}$ as induced subgraph that contains $e_1$ and $e_2$.
    \item Any induced subgraph $K_{2,2,2}$ in $G$ has a non-planar configuration(as shown in Figure \ref{tripartite}), containing six pairs of intersecting edges.
    \item There are precisely $\frac{k}{6}$ distinct copies of $K_{2,2,2}$ as induced subgraphs where $k$ is the number of pairs of intersecting  edges
    \end{enumerate}

\end{corollary}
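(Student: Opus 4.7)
The plan is to translate the hypotheses on $G$ into the language of planar-Rips complexes via \Cref{isomorphism}, and then read off each of the four conclusions from the structural results of \Cref{sec4}. Since $G$ is pure with $\omega(G)=3$, and since a flag complex is pure/closed exactly when its $1$-skeleton is pure/closed, the associated planar-Rips complex $\RR(X)$ is two-dimensional, pure, and closed; moreover, induced subgraphs isomorphic to $K_{2,2,2}$ in $G$ correspond bijectively to induced subcomplexes isomorphic to $\O_2$ in $\RR(X)$.

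Part $(i)$ is immediate from \Cref{minimal}: $\RR(X)$ contains an induced subcomplex isomorphic to $\O_2$, whose $1$-skeleton is exactly $K_{2,2,2}$. For part $(ii)$, given two edges $e_1, e_2$ whose interiors cross in the shadow $\Sc(X)$, \Cref{bedge}.$(ii)$ guarantees that their four endpoints induce a $\gamma_2$-configuration; by the remark following \Cref{minimal}, every $\gamma_2$-configuration extends to a unique copy of $\O_2$, hence to exactly one induced $K_{2,2,2}$ in $G$ containing both $e_1$ and $e_2$.

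Part $(iii)$ reduces to a short combinatorial check on a convex hexagon. An induced $K_{2,2,2}$ lifts to an induced $\O_2$, and by \Cref{polygon} its six vertices form a convex hexagon in $\R^2$. Of the $\binom{6}{2}=15$ chords, the three long diagonals join the antipodal pairs of $\O_2$ and are therefore \emph{not} edges of $K_{2,2,2}$; the remaining $12$ chords --- six hexagon sides and six short diagonals joining vertices that are two apart --- are precisely the edges of $K_{2,2,2}$. A direct interleaving check on the cyclic order of the hexagon shows that no hexagon side crosses another edge in its interior, while each short diagonal interior-crosses exactly two others, giving $\tfrac{6 \cdot 2}{2} = 6$ crossing pairs. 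Part $(iv)$ is then a clean double count over the incidence $\{(P,\{e_1,e_2\}) : P \cong K_{2,2,2} \text{ induced in } G,\ e_1,e_2 \in E(P) \text{ interior-cross}\}$: $(ii)$ says this set has size $k$, while $(iii)$ says it has size $6N$ where $N$ is the number of induced copies of $K_{2,2,2}$, forcing $N = k/6$.

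The main technical step is the uniqueness half of $(ii)$. Existence of the extension of a $\gamma_2$-configuration to an $\O_2$ is supplied by \Cref{minimal}; uniqueness ultimately rests on the convex-geometric rigidity embedded in that proof --- the extra vertices $P, Q$ are forced into the region $L^+_{AD} \cap L^-_{BC}$ on opposite sides of the perpendicular bisector $L_{A|B}$ by \Cref{chull} and \Cref{convhull}, and together with the $\gamma_2$-configuration they pin down precisely one copy of $\O_2$. Once $(ii)$ is in hand, parts $(iii)$ and $(iv)$ are essentially formal consequences of the category equivalence between planar-Rips complexes and unit disk graphs.
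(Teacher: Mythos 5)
Your proposal is correct and follows essentially the same route as the paper, whose proof is a one-line citation of \Cref{minimal}, \Cref{bedge}, \Cref{rep}, and \Cref{b0}; your write-up simply makes explicit the translation through \Cref{isomorphism} and replaces the appeal to \Cref{b0} in part $(iv)$ with an equivalent direct double count. The one step you assert without justification --- that the three non-adjacent pairs of an induced $\O_2$ are the three \emph{long} diagonals of the convex hexagon, which your crossing count in $(iii)$ relies on --- does need a word: it follows, e.g., because the sides of the hexagon must be edges (else the shadow would not be the full convex polygon of \Cref{polygon}), and any other matching forces two crossing long-diagonal edges whose endpoints would have to be adjacent by \Cref{cone}, contradicting antipodality; this is also implicit in \Cref{rep}.
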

\begin{proof}
    Suppose $\RR^1(X)$ is the unit disk graph representation of $G$ for some $X \subset \R^2$. Since $\omega(G)=3$, the planar-Rips complex $\RR(X)$ is two-dimensional. Further, $\RR(X)$ is pure and closed as follows from the hypothesis that $G$ is connected, and every edge is contained in at least two maximal cliques. Now the conclusion simply follows from Theorem \ref{minimal}, Proposition \ref{bedge},  Remark \ref{rep}, and Proposition \ref{b0}, where the graph of $1$-skeleton of octahedron is $K_{2,2,2}$.
\end{proof}

\begin{corollary}
    For $n \geq 3$, suppose $G$ is a pure unit disk graph with $\omega(G)=(n+1)$, and every clique of size $n$ is contained in exactly two maximal cliques. Then $G$ is isomorphic to the $1$-skeleton of an iterated chain of $\O_n$s. More precisely, up to isomorphism, $G$ can be written as $\cup_{i=1}^m H_i$, where each $H_i$ is isomorphic to $K_{n \times 2}$, no three of them have a non-empty pairwise intersection, and any two of them with non-empty intersection, intersect
     \begin{enumerate}
         \item at a single vertex when $n=2$.
         \item either at a single vertex or a single edge when $n>2$.
     \end{enumerate}
\end{corollary}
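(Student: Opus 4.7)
The plan is to pull the entire statement back across the clique-complex $\leftrightarrow$ 1-skeleton equivalence (Proposition \ref{isomorphism}) and then invoke Theorem \ref{weakiswedge} verbatim. Concretely, given the unit disk graph $G$ with realization in $\R^2$ on vertex set $X$, I would set $\RR(X)$ to be the planar-Rips complex whose 1-skeleton is $G$, and then verify that the hypotheses on $G$ translate exactly into: $\RR(X)$ is an $n$-dimensional weak-pseudomanifold in the sense of Definition \ref{defpseudo}(a).

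The first verification is dimension and pureness. Since $\RR(X)$ is a flag complex, the maximal simplices of $\RR(X)$ are in bijection with the maximal cliques of $G$. Hence $\omega(G)=n+1$ implies all facets of $\RR(X)$ are $n$-simplices, giving pureness and $\dim \RR(X)=n$. The second verification concerns the link condition: a clique of size $n$ in $G$ corresponds to an $(n-1)$-simplex of $\RR(X)$, and the maximal cliques containing it correspond to the $n$-simplices having it as a face. The hypothesis that every such clique is in exactly two maximal cliques therefore says precisely that every $(n-1)$-simplex of $\RR(X)$ is contained in exactly two $n$-simplices. Together, this shows $\RR(X)$ is an $n$-dimensional weak-pseudomanifold.

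At this point Theorem \ref{weakiswedge} (applicable since $n\geq 3 \geq 2$) immediately yields that $\RR(X) \cong \bigcup_{i=1}^m P_i$ is an iterated chain of $\mathcal{O}_n$'s in the sense of Definition \ref{iterated}. Taking 1-skeletons on both sides and using that the 1-skeleton functor preserves unions and intersections of induced subcomplexes, I get $G \cong \bigcup_{i=1}^m H_i$ where $H_i$ is the 1-skeleton of $P_i \cong \mathcal{O}_n$, i.e.\ $H_i$ is a complete bi-independent multipartite graph $K_{n\times 2}$ in the paper's notation. The constraints on pairwise intersections (no triple intersection; pairwise intersection is a single vertex for $n=2$, and a single vertex or a single edge for $n>2$) descend verbatim from Definition \ref{iterated}, because a vertex or an edge of $P_i \cap P_j$ is the same object whether viewed in $\RR(X)$ or in its 1-skeleton $G$.

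I do not anticipate a substantive obstacle here; the lemma is essentially a translation statement, and the only nontrivial content is Theorem \ref{weakiswedge} itself. The one subtlety to be careful about is that pureness of a graph does not in general imply pureness of every associated simplicial complex, but for flag complexes the two notions coincide (as noted in the paragraph preceding the subsection \emph{Graph of cross-polytopes}), so the translation is clean. A one-line remark spelling out this equivalence, together with the explicit identification of the 1-skeleton of $\mathcal{O}_n$ with $K_{n\times 2}$ recorded in that same subsection, should suffice to make the proof complete.
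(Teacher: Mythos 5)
Your proposal is correct and matches the paper's approach exactly: the paper's proof is the one-line observation that the statement follows from Theorem \ref{weakiswedge} by passing to the $1$-skeleton, and your write-up simply fills in the routine translation (flag-complex pureness, the bijection between maximal cliques and facets, and the identification of the $1$-skeleton of $\O_n$ with $K_{n\times 2}$) that the paper leaves implicit.
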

\begin{proof}
    This simply follows from Theorem \ref{weakiswedge} by considering the graph of $1$-skeleton of planar-Rips complexes, having weak-pseudomanifold structure.
\end{proof}

\subsection{Obstructions of Planar-Rips complexes}

 Given a hereditary property $\mathcal{P}$ in the category of simplicial complexes or the category of graphs, any complex or a graph that does not satisfy $\mathcal{P}$ further contains a minimal induced substructure, say $F$, that does not satisfy $\mathcal{P}$. The minimality is defined such that all the induced substructures of $F$ do satisfy $\mathcal{P}$. The existence of such an $F$ can be easily verified with repetitive vertex deletions. In case of simplicial complexes, these structures are referred to as obstructions to the corresponding property. We will now define obstructions of planar-Rips complexes, where the property $\mathcal{P}$ denotes whether a simplicial complex admits a planar-Rips structure.

\begin{definition}
    An \textit{obstruction of planar-Rips complex} is a simplicial complex that does not admit planar-Rips structure, all of whose proper induced subcomplexes do.
\end{definition}

 In case of unit disk graphs, the analog of obstructions is  minimal forbidden induced subgraphs for unit disk graphs, referred to as \textit{minimal non unit disk graphs} \cite{udg1}.

\begin{definition}
    A graph $G$ is said to be a minimal non unit disk graph (minimal non-UDG) if every induced subgraph of $G$ other than $G$ itself admits a unit disk graph structure. 
\end{definition}

There is a close relationship between the minimal non-UDGs and the obstructions of planar-Rips complexes. The following observation is a straightforward inference:
\begin{observation*}
    The clique complex of a minimal forbidden induced subgraph for unit disk graph is an obstruction of planar-Rips complex.
\end{observation*}

However, it is not true that the $1$-skeleton of every obstruction of planar-Rips complex is a minimal forbidden induced subgraph for unit disk graph. For instance, the boundary complex of all $n$-dimensional simplices are obstructions of planar-Rips complex. Considering that these are also the obstructions of flag complexes, therefore, we shall restrict only to the class of flag complexes to have the one-to-one correspondence as follows:

\begin{proposition}
    Let $K$ be a flag complex, then $K$ is an obstruction of planar-Rips complex if and only if the graph of $1$-skeleton $K^{(1)}$ is a minimal non-UDG.
\end{proposition}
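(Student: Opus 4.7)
The plan is to exploit the bijection between flag complexes and graphs given by ``take $1$-skeleton'' and ``take clique complex,'' combined with \Cref{isomorphism}, which identifies planar-Rips complexes as clique complexes of unit disk graphs. Under this bijection, a flag complex $K$ admits a planar-Rips structure if and only if $K^{(1)}$ admits a unit disk graph structure (via the same injective map $f:V(K)\to\R^2$). Moreover, the induced-subcomplex operation matches the induced-subgraph operation: for $U\subseteq V(K)$, one has $\sub{U}^{(1)}$ equal to the induced subgraph of $K^{(1)}$ on $U$, and $\sub{U}$ is again flag whenever $K$ is flag.

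For the forward direction, I would suppose $K$ is a flag obstruction to planar-Rips complex. First I would observe that $K^{(1)}$ admits no UDG structure, since any realizing map $f:V(K)\to\R^2$ with $\RR^1(f(V(K))) \cong K^{(1)}$ would, by flagness and \Cref{isomorphism}, make $K$ isomorphic to $\RR(f(V(K)))$. Next, for any proper induced subgraph $H$ of $K^{(1)}$ on a vertex set $U\subsetneq V(K)$, the proper induced subcomplex $\sub{U}$ admits a planar-Rips structure via some $g:U\to\R^2$ by the obstruction hypothesis; taking $1$-skeletons then gives $\RR^1(g(U)) = \sub{U}^{(1)} = H$, so $H$ is a UDG.

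For the reverse direction, I would suppose $K$ is flag and $K^{(1)}$ is a minimal non-UDG. The same clique-complex argument, applied contrapositively, shows $K$ cannot admit a planar-Rips structure. For any proper induced subcomplex $L = \sub{U}$ with $U\subsetneq V(K)$, the graph $L^{(1)}$ is a proper induced subgraph of $K^{(1)}$ and hence admits a UDG realization $f:U\to\R^2$ by minimality. Since $L$ is flag as an induced subcomplex of a flag complex, \Cref{isomorphism} identifies $L$ with the clique complex of $L^{(1)} = \RR^1(f(U))$, which is precisely $\RR(f(U))$; thus $L$ admits a planar-Rips structure.

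The main obstacle is conceptual rather than computational: one must carefully invoke flagness at two points---to recover $K$ from $K^{(1)}$, and to ensure induced subcomplexes remain flag---without which the equivalence genuinely fails (witness the boundary-of-simplex example already flagged in the text, whose $1$-skeleton is a complete graph and hence trivially a UDG, even though the complex itself is not a planar-Rips complex). Once these two uses of flagness are pinned down, the proof is a direct unwinding of the definitions of ``obstruction'' and ``minimal non-UDG.''
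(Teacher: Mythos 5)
Your proposal is correct and follows essentially the same route as the paper, which simply observes that the statement is a direct consequence of \Cref{isomorphism} (the clique-complex/$1$-skeleton correspondence); you have merely unwound that one-line citation into its constituent steps, including the two necessary invocations of flagness. No gap.
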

\begin{proof}
    This is simply a direct consequence of Proposition \ref{isomorphism}.
\end{proof}

Therefore, it follows that the clique complex of a minimal non-UDG is an obstruction of planar-Rips complex. The literature on minimal non-UDGs is surprisingly sparse despite active research in unit disk graphs. The following theorem from \cite{udg1,udg2} provides a concise summary of currently known obstructions of planar-Rips complexes in terms of minimal non-UDGs.

 \begin{figure}[h!]
	\centering

\tikzset{every picture/.style={line width=0.75pt}}

\begin{tikzpicture}[x=0.75pt,y=0.75pt,yscale=-1,xscale=1]
 
\draw    (216.52,110.8) -- (199.19,73.3) ;
\draw [shift={(216.52,110.8)}, rotate = 245.19] [color={rgb, 255:red, 0; green, 0; blue, 0 }  ][fill={rgb, 255:red, 0; green, 0; blue, 0 }  ][line width=0.75]      (0, 0) circle [x radius= 3.35, y radius= 3.35]   ;

\draw    (251.78,110.18) -- (269.7,72.98) ;
\draw [shift={(269.7,72.98)}, rotate = 295.73] [color={rgb, 255:red, 0; green, 0; blue, 0 }  ][fill={rgb, 255:red, 0; green, 0; blue, 0 }  ][line width=0.75]      (0, 0) circle [x radius= 3.35, y radius= 3.35]   ;
\draw [shift={(251.78,110.18)}, rotate = 295.73] [color={rgb, 255:red, 0; green, 0; blue, 0 }  ][fill={rgb, 255:red, 0; green, 0; blue, 0 }  ][line width=0.75]      (0, 0) circle [x radius= 3.35, y radius= 3.35]   ;

\draw    (199.19,73.3) -- (234.44,48.92) ;
\draw [shift={(234.44,48.92)}, rotate = 325.34] [color={rgb, 255:red, 0; green, 0; blue, 0 }  ][fill={rgb, 255:red, 0; green, 0; blue, 0 }  ][line width=0.75]      (0, 0) circle [x radius= 3.35, y radius= 3.35]   ;
\draw [shift={(199.19,73.3)}, rotate = 325.34] [color={rgb, 255:red, 0; green, 0; blue, 0 }  ][fill={rgb, 255:red, 0; green, 0; blue, 0 }  ][line width=0.75]      (0, 0) circle [x radius= 3.35, y radius= 3.35]   ;

\draw    (269.7,72.98) -- (234.44,48.92) ;
\draw    (199.19,73.3) -- (234.74,84.86) ;
\draw    (234.74,84.86) -- (267.64,72.83) ;
\draw [shift={(234.74,84.86)}, rotate = 339.91] [color={rgb, 255:red, 0; green, 0; blue, 0 }  ][fill={rgb, 255:red, 0; green, 0; blue, 0 }  ][line width=0.75]      (0, 0) circle [x radius= 3.35, y radius= 3.35]   ;

\draw    (473.41,109.65) -- (473.41,72.46) ;
\draw [shift={(473.41,109.65)}, rotate = 270] [color={rgb, 255:red, 0; green, 0; blue, 0 }  ][fill={rgb, 255:red, 0; green, 0; blue, 0 }  ][line width=0.75]      (0, 0) circle [x radius= 3.35, y radius= 3.35]   ;

\draw    (508.81,109.65) -- (508.81,72.46) ;
\draw [shift={(508.81,109.65)}, rotate = 270] [color={rgb, 255:red, 0; green, 0; blue, 0 }  ][fill={rgb, 255:red, 0; green, 0; blue, 0 }  ][line width=0.75]      (0, 0) circle [x radius= 3.35, y radius= 3.35]   ;

\draw    (543.33,109.34) -- (543.33,72.15) ;
\draw [shift={(543.33,109.34)}, rotate = 270] [color={rgb, 255:red, 0; green, 0; blue, 0 }  ][fill={rgb, 255:red, 0; green, 0; blue, 0 }  ][line width=0.75]      (0, 0) circle [x radius= 3.35, y radius= 3.35]   ;

\draw    (473.41,72.46) -- (508.81,72.46) ;
\draw [shift={(508.81,72.46)}, rotate = 0] [color={rgb, 255:red, 0; green, 0; blue, 0 }  ][fill={rgb, 255:red, 0; green, 0; blue, 0 }  ][line width=0.75]      (0, 0) circle [x radius= 3.35, y radius= 3.35]   ;
\draw [shift={(473.41,72.46)}, rotate = 0] [color={rgb, 255:red, 0; green, 0; blue, 0 }  ][fill={rgb, 255:red, 0; green, 0; blue, 0 }  ][line width=0.75]      (0, 0) circle [x radius= 3.35, y radius= 3.35]   ;

\draw    (543.33,72.15) -- (508.81,72.46) ;
\draw [shift={(543.33,72.15)}, rotate = 179.48] [color={rgb, 255:red, 0; green, 0; blue, 0 }  ][fill={rgb, 255:red, 0; green, 0; blue, 0 }  ][line width=0.75]      (0, 0) circle [x radius= 3.35, y radius= 3.35]   ;

\draw    (216.52,110.8) -- (251.78,110.18) ;

\draw    (473.41,109.65) -- (508.81,109.65) ;
 
\draw    (508.81,109.65) -- (543.33,109.34) ;

\draw    (473.41,72.46) -- (508.67,48.08) ;
\draw [shift={(508.67,48.08)}, rotate = 325.34] [color={rgb, 255:red, 0; green, 0; blue, 0 }  ][fill={rgb, 255:red, 0; green, 0; blue, 0 }  ][line width=0.75]      (0, 0) circle [x radius= 3.35, y radius= 3.35]   ;

\draw    (543.33,72.15) -- (508.67,48.08) ;
 
\draw    (189.85,233.02) -- (208.08,203.11) ;
\draw [shift={(208.08,203.11)}, rotate = 301.37] [color={rgb, 255:red, 0; green, 0; blue, 0 }  ][fill={rgb, 255:red, 0; green, 0; blue, 0 }  ][line width=0.75]      (0, 0) circle [x radius= 3.35, y radius= 3.35]   ;
\draw [shift={(189.85,233.02)}, rotate = 301.37] [color={rgb, 255:red, 0; green, 0; blue, 0 }  ][fill={rgb, 255:red, 0; green, 0; blue, 0 }  ][line width=0.75]      (0, 0) circle [x radius= 3.35, y radius= 3.35]   ;

\draw    (149.51,170.99) -- (190.24,170.62) ;
\draw [shift={(149.51,170.99)}, rotate = 359.48] [color={rgb, 255:red, 0; green, 0; blue, 0 }  ][fill={rgb, 255:red, 0; green, 0; blue, 0 }  ][line width=0.75]      (0, 0) circle [x radius= 3.35, y radius= 3.35]   ;

\draw    (131.19,202.97) -- (149.51,232.65) ;
\draw [shift={(149.51,232.65)}, rotate = 58.32] [color={rgb, 255:red, 0; green, 0; blue, 0 }  ][fill={rgb, 255:red, 0; green, 0; blue, 0 }  ][line width=0.75]      (0, 0) circle [x radius= 3.35, y radius= 3.35]   ;

\draw    (149.51,170.99) -- (131.19,202.97) ;
\draw [shift={(131.19,202.97)}, rotate = 119.8] [color={rgb, 255:red, 0; green, 0; blue, 0 }  ][fill={rgb, 255:red, 0; green, 0; blue, 0 }  ][line width=0.75]      (0, 0) circle [x radius= 3.35, y radius= 3.35]   ;

\draw    (190.24,170.62) -- (208.08,203.11) ;
\draw [shift={(190.24,170.62)}, rotate = 61.22] [color={rgb, 255:red, 0; green, 0; blue, 0 }  ][fill={rgb, 255:red, 0; green, 0; blue, 0 }  ][line width=0.75]      (0, 0) circle [x radius= 3.35, y radius= 3.35]   ;

\draw    (149.51,232.65) -- (189.85,233.02) ;

\draw    (131.19,202.97) -- (172.01,202.97) ;

\draw    (172.01,202.97) -- (208.08,202.97) ;
\draw [shift={(172.01,202.97)}, rotate = 0] [color={rgb, 255:red, 0; green, 0; blue, 0 }  ][fill={rgb, 255:red, 0; green, 0; blue, 0 }  ][line width=0.75]      (0, 0) circle [x radius= 3.35, y radius= 3.35]   ;

\draw    (342.43,232.22) -- (360.66,202.32) ;
\draw [shift={(360.66,202.32)}, rotate = 301.37] [color={rgb, 255:red, 0; green, 0; blue, 0 }  ][fill={rgb, 255:red, 0; green, 0; blue, 0 }  ][line width=0.75]      (0, 0) circle [x radius= 3.35, y radius= 3.35]   ;
\draw [shift={(342.43,232.22)}, rotate = 301.37] [color={rgb, 255:red, 0; green, 0; blue, 0 }  ][fill={rgb, 255:red, 0; green, 0; blue, 0 }  ][line width=0.75]      (0, 0) circle [x radius= 3.35, y radius= 3.35]   ;

\draw    (302.08,170.2) -- (342.82,169.83) ;
\draw [shift={(302.08,170.2)}, rotate = 359.48] [color={rgb, 255:red, 0; green, 0; blue, 0 }  ][fill={rgb, 255:red, 0; green, 0; blue, 0 }  ][line width=0.75]      (0, 0) circle [x radius= 3.35, y radius= 3.35]   ;

\draw    (283.77,202.18) -- (302.08,231.86) ;
\draw [shift={(302.08,231.86)}, rotate = 58.32] [color={rgb, 255:red, 0; green, 0; blue, 0 }  ][fill={rgb, 255:red, 0; green, 0; blue, 0 }  ][line width=0.75]      (0, 0) circle [x radius= 3.35, y radius= 3.35]   ;
 
\draw    (302.08,170.2) -- (283.77,202.18) ;
\draw [shift={(283.77,202.18)}, rotate = 119.8] [color={rgb, 255:red, 0; green, 0; blue, 0 }  ][fill={rgb, 255:red, 0; green, 0; blue, 0 }  ][line width=0.75]      (0, 0) circle [x radius= 3.35, y radius= 3.35]   ;
 
\draw    (342.82,169.83) -- (360.66,202.32) ;
\draw [shift={(342.82,169.83)}, rotate = 61.22] [color={rgb, 255:red, 0; green, 0; blue, 0 }  ][fill={rgb, 255:red, 0; green, 0; blue, 0 }  ][line width=0.75]      (0, 0) circle [x radius= 3.35, y radius= 3.35]   ;

\draw    (302.08,231.86) -- (342.43,232.22) ;

\draw    (283.77,202.18) -- (333.51,202.18) ;

\draw    (333.89,202.18) -- (360.66,202.18) ;
\draw [shift={(333.89,202.18)}, rotate = 0] [color={rgb, 255:red, 0; green, 0; blue, 0 }  ][fill={rgb, 255:red, 0; green, 0; blue, 0 }  ][line width=0.75]      (0, 0) circle [x radius= 3.35, y radius= 3.35]   ;

\draw    (190.24,170.62) -- (189.85,233.02) ;

\draw    (342.82,169.83) -- (302.08,231.86) ;

\draw    (482.77,232.36) -- (501,202.45) ;
\draw [shift={(501,202.45)}, rotate = 301.37] [color={rgb, 255:red, 0; green, 0; blue, 0 }  ][fill={rgb, 255:red, 0; green, 0; blue, 0 }  ][line width=0.75]      (0, 0) circle [x radius= 3.35, y radius= 3.35]   ;
\draw [shift={(482.77,232.36)}, rotate = 301.37] [color={rgb, 255:red, 0; green, 0; blue, 0 }  ][fill={rgb, 255:red, 0; green, 0; blue, 0 }  ][line width=0.75]      (0, 0) circle [x radius= 3.35, y radius= 3.35]   ;

\draw    (442.42,170.33) -- (483.16,169.96) ;
\draw [shift={(442.42,170.33)}, rotate = 359.48] [color={rgb, 255:red, 0; green, 0; blue, 0 }  ][fill={rgb, 255:red, 0; green, 0; blue, 0 }  ][line width=0.75]      (0, 0) circle [x radius= 3.35, y radius= 3.35]   ;

\draw    (424.11,202.32) -- (442.42,231.99) ;
\draw [shift={(442.42,231.99)}, rotate = 58.32] [color={rgb, 255:red, 0; green, 0; blue, 0 }  ][fill={rgb, 255:red, 0; green, 0; blue, 0 }  ][line width=0.75]      (0, 0) circle [x radius= 3.35, y radius= 3.35]   ;

\draw    (442.42,170.33) -- (424.11,202.32) ;
\draw [shift={(424.11,202.32)}, rotate = 119.8] [color={rgb, 255:red, 0; green, 0; blue, 0 }  ][fill={rgb, 255:red, 0; green, 0; blue, 0 }  ][line width=0.75]      (0, 0) circle [x radius= 3.35, y radius= 3.35]   ;
 
\draw    (483.16,169.96) -- (501,202.45) ;
\draw [shift={(483.16,169.96)}, rotate = 61.22] [color={rgb, 255:red, 0; green, 0; blue, 0 }  ][fill={rgb, 255:red, 0; green, 0; blue, 0 }  ][line width=0.75]      (0, 0) circle [x radius= 3.35, y radius= 3.35]   ;

\draw    (442.42,231.99) -- (482.77,232.36) ;
 
\draw    (442.42,170.33) -- (462.4,202.45) ;
 
\draw    (462.4,202.45) -- (501,202.45) ;
\draw [shift={(462.4,202.45)}, rotate = 0] [color={rgb, 255:red, 0; green, 0; blue, 0 }  ][fill={rgb, 255:red, 0; green, 0; blue, 0 }  ][line width=0.75]      (0, 0) circle [x radius= 3.35, y radius= 3.35]   ;

\draw    (372.65,78.7) -- (394.37,111.52) ;
\draw [shift={(394.37,111.52)}, rotate = 56.51] [color={rgb, 255:red, 0; green, 0; blue, 0 }  ][fill={rgb, 255:red, 0; green, 0; blue, 0 }  ][line width=0.75]      (0, 0) circle [x radius= 3.35, y radius= 3.35]   ;
 
\draw    (372.65,78.7) -- (411.07,78.26) ;
\draw [shift={(411.07,78.26)}, rotate = 359.34] [color={rgb, 255:red, 0; green, 0; blue, 0 }  ][fill={rgb, 255:red, 0; green, 0; blue, 0 }  ][line width=0.75]      (0, 0) circle [x radius= 3.35, y radius= 3.35]   ;
 
\draw    (349.83,43.22) -- (372.65,78.7) ;
\draw [shift={(372.65,78.7)}, rotate = 57.25] [color={rgb, 255:red, 0; green, 0; blue, 0 }  ][fill={rgb, 255:red, 0; green, 0; blue, 0 }  ][line width=0.75]      (0, 0) circle [x radius= 3.35, y radius= 3.35]   ;
\draw [shift={(349.83,43.22)}, rotate = 57.25] [color={rgb, 255:red, 0; green, 0; blue, 0 }  ][fill={rgb, 255:red, 0; green, 0; blue, 0 }  ][line width=0.75]      (0, 0) circle [x radius= 3.35, y radius= 3.35]   ;

\draw    (372.65,78.7) -- (354.28,111.67) ;
\draw [shift={(354.28,111.67)}, rotate = 119.13] [color={rgb, 255:red, 0; green, 0; blue, 0 }  ][fill={rgb, 255:red, 0; green, 0; blue, 0 }  ][line width=0.75]      (0, 0) circle [x radius= 3.35, y radius= 3.35]   ;
 
\draw    (372.65,78.7) -- (333.96,78.55) ;
\draw [shift={(333.96,78.55)}, rotate = 180.22] [color={rgb, 255:red, 0; green, 0; blue, 0 }  ][fill={rgb, 255:red, 0; green, 0; blue, 0 }  ][line width=0.75]      (0, 0) circle [x radius= 3.35, y radius= 3.35]   ;
 
\draw    (393.53,42.92) -- (372.65,78.7) ;
\draw [shift={(393.53,42.92)}, rotate = 120.26] [color={rgb, 255:red, 0; green, 0; blue, 0 }  ][fill={rgb, 255:red, 0; green, 0; blue, 0 }  ][line width=0.75]      (0, 0) circle [x radius= 3.35, y radius= 3.35]   ;

\draw    (109.8,110.8) -- (80,80.3) ;

\draw    (109.8,110.8) -- (140.4,80.7) ;
\draw [shift={(140.4,80.7)}, rotate = 315.47] [color={rgb, 255:red, 0; green, 0; blue, 0 }  ][fill={rgb, 255:red, 0; green, 0; blue, 0 }  ][line width=0.75]      (0, 0) circle [x radius= 3.35, y radius= 3.35]   ;
\draw [shift={(109.8,110.8)}, rotate = 315.47] [color={rgb, 255:red, 0; green, 0; blue, 0 }  ][fill={rgb, 255:red, 0; green, 0; blue, 0 }  ][line width=0.75]      (0, 0) circle [x radius= 3.35, y radius= 3.35]   ;
 
\draw    (80,80.3) -- (109.6,50.3) ;
\draw [shift={(109.6,50.3)}, rotate = 314.62] [color={rgb, 255:red, 0; green, 0; blue, 0 }  ][fill={rgb, 255:red, 0; green, 0; blue, 0 }  ][line width=0.75]      (0, 0) circle [x radius= 3.35, y radius= 3.35]   ;
\draw [shift={(80,80.3)}, rotate = 314.62] [color={rgb, 255:red, 0; green, 0; blue, 0 }  ][fill={rgb, 255:red, 0; green, 0; blue, 0 }  ][line width=0.75]      (0, 0) circle [x radius= 3.35, y radius= 3.35]   ;

\draw    (140.4,80.7) -- (109.6,50.3) ;

\draw    (80,80.3) -- (110.4,80.5) ;

\draw    (110.4,80.5) -- (140.4,80.7) ;
\draw [shift={(110.4,80.5)}, rotate = 0.38] [color={rgb, 255:red, 0; green, 0; blue, 0 }  ][fill={rgb, 255:red, 0; green, 0; blue, 0 }  ][line width=0.75]      (0, 0) circle [x radius= 3.35, y radius= 3.35]   ;

\draw (222,123.4) node [anchor=north west][inner sep=0.75pt]    {$G_{2}$};

\draw (452.67,244.4) node [anchor=north west][inner sep=0.75pt]    {$G_{7}$};

\draw (308,245.73) node [anchor=north west][inner sep=0.75pt]    {$G_{6}$};

\draw (155.33,246.73) node [anchor=north west][inner sep=0.75pt]    {$G_{5}$};

\draw (498.22,121.89) node [anchor=north west][inner sep=0.75pt]    {$G_{4}$};

\draw (362,122.07) node [anchor=north west][inner sep=0.75pt]    {$G_{3}$};

\draw (99,123.4) node [anchor=north west][inner sep=0.75pt]    {$G_{1}$};
\end{tikzpicture}

	\caption{Known minimal non unit disk graphs with $\leq7$ vertices (Source: \cite[p. 2]{udg2})}
	\label{fig:knownMinForb}
\end{figure}

\begin{theorem} \label{nonudg}
    The following graphs are minimal non-UDGs. More precisely, the clique complex of these graphs are obstructions of planar-Rips complex.
    \begin{enumerate}
        \item Graphs in Figure \ref{fig:knownMinForb}
        \item $\overline{K_2+C_{2k+1}}$ for every $k\geq 1$ 
        \item   $\overline{C_{2k}}$ for every $k\geq 4$
        \item   $C^{*}_{2k}$ for every $k\geq 4$
    \end{enumerate}
\end{theorem}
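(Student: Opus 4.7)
The plan is to reduce the assertion to the purely graph-theoretic claim that each listed graph is a minimal non-UDG, and then invoke the functorial correspondence of \Cref{isomorphism} together with the preceding observation that for a flag complex, being an obstruction to planar-Rips structure is equivalent to the $1$-skeleton being a minimal non-UDG. Since the clique complexes of the graphs appearing in items (1)--(4) are flag by construction (no induced boundary complex of a simplex appears, owing to their low clique number), this reduction uniformly handles the ``planar-Rips obstruction'' half of the theorem and leaves only the graph-theoretic statement to verify.

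For item (1), the seven graphs $G_1,\ldots,G_7$ in Figure \ref{fig:knownMinForb} constitute the known catalog of minimal forbidden induced subgraphs on at most seven vertices from \cite{udg2}, and I would invoke that source directly. For items (2)--(4), the argument splits into non-realizability and minimality. The non-realizability step assumes a hypothetical realization $f : V \to \R^2$ and derives a contradiction by combining \Cref{cone} (segment crossings in $\Sc(V)$ force an adjacency) with \Cref{k16} (no induced $K_{1,6}$). For $\overline{C_{2k}}$ with $k\geq 4$, the non-edges organize into a long cycle of ``far pairs'' whose shadow segments must cross in a pattern that, after applying \Cref{cone} repeatedly, concentrates six pairwise non-adjacent neighbors at a single vertex, violating \Cref{k16}. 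For $\overline{K_2+C_{2k+1}}$, the odd parity of the cycle produces a contradiction when one tries to place the two additional vertices consistently with the alternating adjacency pattern forced by \Cref{cone}. For $C^{*}_{2k}$, an analogous argument adapted to its particular decoration of an even cycle applies.

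The minimality step is the more delicate one: for each infinite family one must exhibit a unit-disk realization of every proper induced subgraph. The uniform strategy is to delete a single vertex, observe that the resulting graph becomes ``path-like'' rather than ``cycle-like,'' and place the surviving vertices along a slightly-perturbed regular polygon of calibrated radius so that exactly the required adjacencies correspond to pairs at Euclidean distance less than $1$. The main obstacle I anticipate is verifying these trigonometric inequalities uniformly in $k$: the radius must be chosen as a function of $k$ in a way that keeps the desired near pairs below distance $1$ while pushing the desired far pairs above, and ensuring this window stays open as $k\to\infty$ typically requires a limiting or monotonicity argument together with an explicit check in the base cases $k=1$, respectively $k=4$.
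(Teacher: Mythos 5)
The first thing to note is that the paper does not prove this theorem at all: it is explicitly presented as ``a concise summary of currently known minimal non-UDGs from \cite{udg1,udg2},'' so the paper's ``proof'' is a citation of Marathe et al.\ and of Atminas--Zamaraev, combined with the preceding proposition that for flag complexes, being an obstruction to planar-Rips structure is equivalent to the $1$-skeleton being a minimal non-UDG. Your reduction to the graph-theoretic statement via that proposition is exactly right (though note the clique complex of \emph{any} graph is flag by definition; the aside about low clique number is unnecessary, and in fact $\overline{C_{2k}}$ and $C^*_{2k}$ contain cliques of size $k$). Your treatment of item (1) by citation therefore matches the paper. But for items (2)--(4) you undertake to reprove results whose published proofs are long and delicate, and your sketch contains a concrete structural error.

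The fatal problem is the proposed use of \Cref{k16}. An induced $K_{1,6}$ requires six pairwise non-adjacent vertices, i.e.\ an independent set of size $6$. But every graph in the three infinite families has independence number at most $3$: an independent set in $\overline{C_{2k}}$ is a clique in $C_{2k}$, hence has size at most $2$ for $2k\geq 8$; in $C^{*}_{2k}$ the two added $k$-cliques force any independent set to meet each of $U$ and $V$ in at most one vertex; and an independent set in $\overline{K_2+C_{2k+1}}$ is a clique in $K_2\sqcup C_{2k+1}$, hence has size at most $3$ (at most $2$ for $k\geq 2$). So none of these graphs contains \emph{any} induced $K_{1,6}$, and no amount of applying \Cref{cone} can ``concentrate six pairwise non-adjacent neighbors at a single vertex.'' The argument you sketch for $\overline{C_{2k}}$, and the ``analogous'' one for $C^{*}_{2k}$, cannot be repaired along these lines; the actual proofs in \cite{udg2} use quite different and substantially longer geometric arguments. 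Your parity idea for $\overline{K_2+C_{2k+1}}$ (non-adjacent pairs of the odd cycle forced to alternate across the segment joining the two dominating non-adjacent vertices) is closer in spirit to the known proof, but as stated it is only a gesture, and the minimality half for all three families remains an unverified construction. Given that the paper treats these as imported results, the safe course is to cite \cite{udg1,udg2} for items (2)--(4) as well, rather than attempt a two-line reproof.
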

Note that in the above theorem, the graph $C^{*}_{2k}$ denotes the graph obtained by adding two $k$-cliques $\overline{U}$ and $\overline{V}$ to the cyclic graph $C_{2k}$, where $U$ and $V$ are the two maximal independent sets in $C_{2k}$, each of size $k$. Furthermore, it follows that there is an obstruction of planar-Rips complex in every odd dimension $d$.

In the realm of topology, the classification problem, namely, classification of structures like manifolds, pseudomanifolds and weak-pseudomanifolds up to homeomorphism (or homotopy) has been extensively studied and still remains a hot topic. Indeed, these structures have been largely classified in the literature \cite{pseudo,bagchi,brown,wall}. Following Theorem \ref{free}, Theorem \ref{minimal} and Lemma \ref{pseudo}, we have a large class of forbidden induced structures for planar-Rips complexes and unit disk graphs.  For instance, the result by Adamaszek et al. in Theorem \ref{normal} implies that no manifold other than the cross-polytopal spheres $\O_n$ admits a planar-Rips structure. This makes a pavement for extracting a wide spectrum of obstructions from these forbidden structures. Consider the following examples.
\begin{example} \label{k2}
   Let $K$ be the complex $C_5 * \s^0$, a flag triangulation of sphere $\s^2$ where $C_5$ is a cycle of length $5$. The complex $K$ being a $2$-dimensional pseudomanifold does not admit a planar-Rips structure as follows from Theorem \ref{octa}. To determine whether $K$ is an obstruction of a planar-Rips complex, we note that the $1$-skeleton of $K$ can be identified as the complement of the graph $\overline{K_2 + C_5}$ which is a minimal non-UDG as shown in \cite[Theorem 4.1]{udg2}. Therefore, the complex $K$ is indeed an obstruction of planar-Rips complex.
\end{example}

 \begin{figure}[h!]
  \begin{subfigure}[b]{0.45\textwidth}
    \centering

\tikzset{every picture/.style={line width=0.75pt}} 

\begin{tikzpicture}[x=0.75pt,y=0.75pt,yscale=-0.75,xscale=0.75]

\draw [line width=1.5]    (341.7,108.43) -- (341.7,154.12) ;

\draw [line width=1.5]    (341.7,63.46) -- (341.7,108.43) ;

\draw [color={rgb, 255:red, 120; green, 120; blue, 120 }  ,draw opacity=1 ]   (372.15,194.57) -- (414.03,177.67) ;

\draw [color={rgb, 255:red, 120; green, 120; blue, 120 }  ,draw opacity=1 ]   (296.46,140.77) -- (272.19,176.7) ;
 
\draw [color={rgb, 255:red, 120; green, 120; blue, 120 }  ,draw opacity=1 ]   (272.19,176.7) -- (313.6,192.64) ;
 
\draw [color={rgb, 255:red, 120; green, 120; blue, 120 }  ,draw opacity=1 ]   (254.11,127.68) -- (296.46,140.77) ;
 
\draw [color={rgb, 255:red, 120; green, 120; blue, 120 }  ,draw opacity=1 ]   (295.54,97.01) -- (341.7,108.43) ;
 
\draw    (295.54,97.01) -- (296.96,141.27) ;

\draw [color={rgb, 255:red, 120; green, 120; blue, 120 }  ,draw opacity=1 ]   (295.04,96.51) -- (254.11,127.68) ;

\draw [color={rgb, 255:red, 120; green, 120; blue, 120 }  ,draw opacity=1 ][line width=0.75]    (341.2,62.96) -- (295.04,96.51) ;

\draw [color={rgb, 255:red, 120; green, 120; blue, 120 }  ,draw opacity=1 ]   (341.2,62.96) -- (385.93,94.61) ;

\draw [color={rgb, 255:red, 120; green, 120; blue, 120 }  ,draw opacity=1 ]   (386.43,95.11) -- (428.31,124.37) ;

\draw [color={rgb, 255:red, 120; green, 120; blue, 120 }  ,draw opacity=1 ]   (386.43,95.11) -- (386.43,140.08) ;
 
\draw [color={rgb, 255:red, 120; green, 120; blue, 120 }  ,draw opacity=1 ]   (428.31,124.37) -- (414.03,177.67) ;
 
\draw [color={rgb, 255:red, 120; green, 120; blue, 120 }  ,draw opacity=1 ]   (414.03,177.67) -- (399.28,228.35) ;
 
\draw [color={rgb, 255:red, 120; green, 120; blue, 120 }  ,draw opacity=1 ]   (343.58,229.28) -- (398.78,227.85) ;

\draw [color={rgb, 255:red, 120; green, 120; blue, 120 }  ,draw opacity=1 ]   (313.6,192.64) -- (290.75,229.76) ;
 
\draw [color={rgb, 255:red, 120; green, 120; blue, 120 }  ,draw opacity=1 ]   (290.75,229.76) -- (343.58,229.28) ;

\draw [color={rgb, 255:red, 120; green, 120; blue, 120 }  ,draw opacity=1 ]   (371.65,194.07) -- (398.78,227.85) ;

\draw [line width=1.5]    (372.15,194.57) -- (344.08,229.78) ;

\draw [line width=1.5]    (314.1,193.14) -- (344.55,232.64) ;
 
\draw [color={rgb, 255:red, 120; green, 120; blue, 120 }  ,draw opacity=1 ]   (254.11,127.68) -- (272.19,176.7) ;

\draw [color={rgb, 255:red, 120; green, 120; blue, 120 }  ,draw opacity=1 ]   (272.19,176.7) -- (290.75,229.76) ;
 
\draw [line width=1.5]    (341.7,154.12) -- (386.43,140.08) ;

\draw [line width=1.5]    (296.96,141.27) -- (341.7,154.12) ;

\draw [line width=1.5]    (341.7,154.12) -- (372.15,194.57) ;

\draw [line width=1.5]    (314.1,193.14) -- (372.15,194.57) ;

\draw [line width=1.5]    (341.7,108.43) -- (386.43,140.08) ;

\draw [line width=1.5]    (296.96,141.27) -- (314.1,193.14) ;

\draw [line width=1.5]    (341.7,154.12) -- (314.1,193.14) ;

\draw [line width=1.5]    (386.43,140.08) -- (372.15,194.57) ;

\draw [line width=1.5]    (341.7,108.43) -- (296.96,141.27) ;

\draw [color={rgb, 255:red, 120; green, 120; blue, 120 }  ,draw opacity=1 ]   (341.7,108.43) -- (388.81,97.01) ;

\draw [color={rgb, 255:red, 120; green, 120; blue, 120 }  ,draw opacity=1 ]   (386.43,140.08) -- (428.31,124.37) ;

\draw [color={rgb, 255:red, 120; green, 120; blue, 120 }  ,draw opacity=1 ]   (386.43,140.08) -- (414.03,177.67) ;

\draw  [color={rgb, 255:red, 247; green, 247; blue, 247 }  ,draw opacity=1 ][fill={rgb, 255:red, 255; green, 255; blue, 255 }  ,fill opacity=1 ][dash pattern={on 0.84pt off 2.51pt}]  (253.42, 126.63) circle [x radius= 12.1, y radius= 12.1]   ;
\draw (247.92,120.13) node [anchor=north west][inner sep=0.75pt]  [font=\footnotesize,color={rgb, 255:red, 0; green, 0; blue, 0 }  ,opacity=1 ] [align=left] {I};

\draw  [color={rgb, 255:red, 249; green, 249; blue, 249 }  ,draw opacity=1 ][fill={rgb, 255:red, 255; green, 255; blue, 255 }  ,fill opacity=1 ][dash pattern={on 0.84pt off 2.51pt}][line width=0.75]   (340.72, 62.23) circle [x radius= 13.2, y radius= 13.2]   ;
\draw (334.72,54.73) node [anchor=north west][inner sep=0.75pt]  [font=\small] [align=left] {C};

\draw  [color={rgb, 255:red, 249; green, 249; blue, 249 }  ,draw opacity=1 ][fill={rgb, 255:red, 255; green, 255; blue, 255 }  ,fill opacity=1 ][dash pattern={on 0.84pt off 2.51pt}]  (340.49, 152.17) circle [x radius= 12.9, y radius= 12.9]   ;
\draw (334.99,144.67) node [anchor=north west][inner sep=0.75pt]  [font=\small] [align=left] {A};

\draw  [color={rgb, 255:red, 247; green, 247; blue, 247 }  ,draw opacity=1 ][fill={rgb, 255:red, 255; green, 255; blue, 255 }  ,fill opacity=1 ][dash pattern={on 0.84pt off 2.51pt}]  (396.85, 223.48) circle [x radius= 12.1, y radius= 12.1]   ;
\draw (391.35,216.98) node [anchor=north west][inner sep=0.75pt]  [font=\footnotesize,color={rgb, 255:red, 0; green, 0; blue, 0 }  ,opacity=1 ] [align=left] {H};

\draw  [color={rgb, 255:red, 247; green, 247; blue, 247 }  ,draw opacity=1 ][fill={rgb, 255:red, 255; green, 255; blue, 255 }  ,fill opacity=1 ][dash pattern={on 0.84pt off 2.51pt}]  (290.04, 225.38) circle [x radius= 12.1, y radius= 12.1]   ;
\draw (284.54,218.88) node [anchor=north west][inner sep=0.75pt]  [font=\footnotesize,color={rgb, 255:red, 0; green, 0; blue, 0 }  ,opacity=1 ] [align=left] {K};

\draw  [color={rgb, 255:red, 249; green, 249; blue, 249 }  ,draw opacity=1 ][fill={rgb, 255:red, 255; green, 255; blue, 255 }  ,fill opacity=1 ][dash pattern={on 0.84pt off 2.51pt}]  (312.65, 190.24) circle [x radius= 13.2, y radius= 13.2]   ;
\draw (306.65,182.74) node [anchor=north west][inner sep=0.75pt]  [font=\small] [align=left] {D};

\draw  [color={rgb, 255:red, 249; green, 249; blue, 249 }  ,draw opacity=1 ][fill={rgb, 255:red, 255; green, 255; blue, 255 }  ,fill opacity=1 ][dash pattern={on 0.84pt off 2.51pt}]  (370.94, 190.71) circle [x radius= 12.9, y radius= 12.9]   ;
\draw (365.44,183.21) node [anchor=north west][inner sep=0.75pt]  [font=\small] [align=left] {E};

\draw  [color={rgb, 255:red, 247; green, 247; blue, 247 }  ,draw opacity=1 ][fill={rgb, 255:red, 255; green, 255; blue, 255 }  ,fill opacity=1 ][dash pattern={on 0.84pt off 2.51pt}]  (428.07, 121.19) circle [x radius= 12.1, y radius= 12.1]   ;
\draw (422.57,114.69) node [anchor=north west][inner sep=0.75pt]  [font=\footnotesize,color={rgb, 255:red, 0; green, 0; blue, 0 }  ,opacity=1 ] [align=left] {J};

\draw  [color={rgb, 255:red, 249; green, 249; blue, 249 }  ,draw opacity=1 ][fill={rgb, 255:red, 255; green, 255; blue, 255 }  ,fill opacity=1 ][dash pattern={on 0.84pt off 2.51pt}]  (296.7, 139.79) circle [x radius= 13.51, y radius= 13.51]   ;
\draw (290.2,132.29) node [anchor=north west][inner sep=0.75pt]  [font=\small] [align=left] {G};

\draw  [color={rgb, 255:red, 249; green, 249; blue, 249 }  ,draw opacity=1 ][fill={rgb, 255:red, 255; green, 255; blue, 255 }  ,fill opacity=1 ][dash pattern={on 0.84pt off 2.51pt}]  (385.48, 136.94) circle [x radius= 12.9, y radius= 12.9]   ;
\draw (379.98,129.44) node [anchor=north west][inner sep=0.75pt]  [font=\small] [align=left] {F};

\draw  [color={rgb, 255:red, 249; green, 249; blue, 249 }  ,draw opacity=1 ][fill={rgb, 255:red, 255; green, 255; blue, 255 }  ,fill opacity=1 ][dash pattern={on 0.84pt off 2.51pt}]  (340.49, 106.01) circle [x radius= 12.9, y radius= 12.9]   ;
\draw (334.99,98.51) node [anchor=north west][inner sep=0.75pt]  [font=\small] [align=left] {B};

\draw  [color={rgb, 255:red, 249; green, 249; blue, 249 }  ,draw opacity=1 ][fill={rgb, 255:red, 255; green, 255; blue, 255 }  ,fill opacity=1 ][dash pattern={on 0.84pt off 2.51pt}]  (343.1, 226.88) circle [x radius= 13.2, y radius= 13.2]   ;
\draw (337.1,219.38) node [anchor=north west][inner sep=0.75pt]  [font=\small] [align=left] {C};

\draw  [color={rgb, 255:red, 247; green, 247; blue, 247 }  ,draw opacity=1 ][fill={rgb, 255:red, 255; green, 255; blue, 255 }  ,fill opacity=1 ][dash pattern={on 0.84pt off 2.51pt}]  (413.32, 174.49) circle [x radius= 12.1, y radius= 12.1]   ;
\draw (407.82,167.99) node [anchor=north west][inner sep=0.75pt]  [font=\footnotesize,color={rgb, 255:red, 0; green, 0; blue, 0 }  ,opacity=1 ] [align=left] {I};

\draw  [color={rgb, 255:red, 255; green, 255; blue, 255 }  ,draw opacity=1 ][fill={rgb, 255:red, 255; green, 255; blue, 255 }  ,fill opacity=1 ][dash pattern={on 0.84pt off 2.51pt}]  (386.65, 92.63) circle [x radius= 12.1, y radius= 12.1]   ;
\draw (381.15,86.13) node [anchor=north west][inner sep=0.75pt]  [font=\footnotesize,color={rgb, 255:red, 0; green, 0; blue, 0 }  ,opacity=1 ] [align=left] {K};

\draw  [color={rgb, 255:red, 247; green, 247; blue, 247 }  ,draw opacity=1 ][fill={rgb, 255:red, 255; green, 255; blue, 255 }  ,fill opacity=1 ][dash pattern={on 0.84pt off 2.51pt}]  (295.01, 91.66) circle [x radius= 12.1, y radius= 12.1]   ;
\draw (289.51,85.16) node [anchor=north west][inner sep=0.75pt]  [font=\footnotesize,color={rgb, 255:red, 0; green, 0; blue, 0 }  ,opacity=1 ] [align=left] {H};

\draw  [color={rgb, 255:red, 247; green, 247; blue, 247 }  ,draw opacity=1 ][fill={rgb, 255:red, 255; green, 255; blue, 255 }  ,fill opacity=1 ][dash pattern={on 0.84pt off 2.51pt}]  (271.03, 173.03) circle [x radius= 12.1, y radius= 12.1]   ;
\draw (265.53,166.53) node [anchor=north west][inner sep=0.75pt]  [font=\footnotesize,color={rgb, 255:red, 0; green, 0; blue, 0 }  ,opacity=1 ] [align=left] {J};

\end{tikzpicture}

    \caption{Minimal flag triangulation of $\R P^2$ } \label{rp2}
  \end{subfigure}
  \begin{subfigure}[b]{0.45\textwidth}
    \centering

\tikzset{every picture/.style={line width=0.75pt}} 

\begin{tikzpicture}[x=0.75pt,y=0.75pt,yscale=-0.75,xscale=0.75]

\draw [line width=0.75]    (290.99,34.8) -- (293.74,169.3) ;
\draw [shift={(293.74,169.3)}, rotate = 88.83] [color={rgb, 255:red, 0; green, 0; blue, 0 }  ][fill={rgb, 255:red, 0; green, 0; blue, 0 }  ][line width=0.75]      (0, 0) circle [x radius= 3.35, y radius= 3.35]   ;

\draw [line width=0.75]    (249.4,147.66) -- (293.74,169.3) ;

\draw [line width=0.75]    (334.5,147.01) -- (293.74,169.3) ;

\draw [line width=0.75]    (355.35,74.79) -- (290.99,34.8) ;
\draw [shift={(290.99,34.8)}, rotate = 211.86] [color={rgb, 255:red, 0; green, 0; blue, 0 }  ][fill={rgb, 255:red, 0; green, 0; blue, 0 }  ][line width=0.75]      (0, 0) circle [x radius= 3.35, y radius= 3.35]   ;
\draw [shift={(355.35,74.79)}, rotate = 211.86] [color={rgb, 255:red, 0; green, 0; blue, 0 }  ][fill={rgb, 255:red, 0; green, 0; blue, 0 }  ][line width=0.75]      (0, 0) circle [x radius= 3.35, y radius= 3.35]   ;

\draw [line width=0.75]    (270.25,95.77) -- (249.4,147.66) ;

\draw [line width=0.75]    (355.35,74.79) -- (334.5,147.01) ;

\draw [line width=0.75]    (225.08,74.23) -- (249.4,147.66) ;
 
\draw [line width=0.75]    (290.99,34.8) -- (225.08,74.23) ;

\draw [line width=0.75]    (270.25,95.77) -- (334.5,147.01) ;

\draw [line width=0.75]    (270.25,95.77) -- (355.35,74.79) ;

\draw [line width=0.75]    (225.08,74.23) -- (270.25,95.77) ;
\draw [shift={(270.25,95.77)}, rotate = 25.5] [color={rgb, 255:red, 0; green, 0; blue, 0 }  ][fill={rgb, 255:red, 0; green, 0; blue, 0 }  ][line width=0.75]      (0, 0) circle [x radius= 3.35, y radius= 3.35]   ;
\draw [shift={(225.08,74.23)}, rotate = 25.5] [color={rgb, 255:red, 0; green, 0; blue, 0 }  ][fill={rgb, 255:red, 0; green, 0; blue, 0 }  ][line width=0.75]      (0, 0) circle [x radius= 3.35, y radius= 3.35]   ;

\draw [line width=0.75]    (290.99,34.8) -- (270.25,95.77) ;

\draw [line width=0.75]    (249.4,147.66) -- (334.5,147.01) ;
\draw [shift={(334.5,147.01)}, rotate = 359.56] [color={rgb, 255:red, 0; green, 0; blue, 0 }  ][fill={rgb, 255:red, 0; green, 0; blue, 0 }  ][line width=0.75]      (0, 0) circle [x radius= 3.35, y radius= 3.35]   ;
\draw [shift={(249.4,147.66)}, rotate = 359.56] [color={rgb, 255:red, 0; green, 0; blue, 0 }  ][fill={rgb, 255:red, 0; green, 0; blue, 0 }  ][line width=0.75]      (0, 0) circle [x radius= 3.35, y radius= 3.35]   ;

\draw (268.44,107.46) node [anchor=north west][inner sep=0.75pt]   [align=left] {A};

\draw (203.98,81.59) node [anchor=north west][inner sep=0.75pt]   [align=left] {G};

\draw (300.42,18.33) node [anchor=north west][inner sep=0.75pt]   [align=left] {B};

\draw (361.69,82.38) node [anchor=north west][inner sep=0.75pt]   [align=left] {F};

\draw (291.8,180.24) node [anchor=north west][inner sep=0.75pt]   [align=left] {C};

\draw (234.6,156.91) node [anchor=north west][inner sep=0.75pt]   [align=left] {D};

\draw (342.65,151.23) node [anchor=north west][inner sep=0.75pt]   [align=left] {E};

\end{tikzpicture}

        \caption{Graph of $1$-skeleton of $\sub{ABCDEFG}$} \label{obs}
  \end{subfigure}
  \caption{Representation of minimal flag triangulation of $\R P^2$ in terms of its $1$-skeleton (on the left) that does not admit a planar-Rips structure. Its induced subcomplex $\sub{ABCDEFG}$ (on the right) is an obstruction of planar-Rips complex}
\end{figure}

\begin{example}
    Consider the minimal flag triangulation of the projective space $\R P^2$ with $11$ vertices as shown in Figure \ref{rp2} \cite[Example 3.1]{rp2}. We know that this complex cannot be realized as a planar-Rips complex since its fundamental group is not free. Hence, there exists at least one obstruction of planar-Rips complex that is embedded inside this triangulation. The complex being non-orientable and with a torsion in its homology group, it appears to be a `highly forbidden' structure for a planar-Rips complex, suggesting that any embedded obstruction within it is likely to exhibit repetitive structural patterns. With this in mind, we consider the induced subcomplex $K=\sub{ABCDEFG}$, and consequently use topological methods to prove in the next theorem that $K$ is indeed an obstruction of planar-Rips complex. In other words, the graph of $1$-skeleton of $K$ is a minimal non-UDG. 
\end{example}

\begin{theorem} \label{obsinrp2}
    The graph in Figure \ref{obs} is a minimal non-UDG, or equivalently, its clique complex is an obstruction of planar-Rips complex.
\end{theorem}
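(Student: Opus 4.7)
To prove the graph $G$ in \Cref{obs} is a minimal non-UDG, I would verify the two defining conditions: (a) $G$ admits no unit disk graph realization, and (b) every proper induced subgraph $G-v$ does. The minimality part (b) reduces to exhibiting, for each of the seven vertices $v$, an explicit coordinate assignment in $\R^2$ and verifying the $\binom{6}{2}=15$ pairwise distance conditions. The case $v=C$ is the standard realization of the wheel $W_5$ with $A$ at the origin and the five remaining neighbors at the corners of a regular pentagon of circumradius $r \in [1/(2\sin 72^\circ),\, 1/(2\sin 36^\circ))$, and the remaining six cases admit similar ad hoc constructions.

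For the non-realizability (a), suppose for contradiction that $f \colon V(G) \to \R^2$ is a UDG realization at threshold $1$. The vertex $A$ has five neighbors $\{B, G, D, E, F\}$ forming an induced $5$-cycle $B{-}G{-}D{-}E{-}F{-}B$, so the subcomplex on $\{A, B, G, D, E, F\}$ is the wheel $W_5$. The plan proceeds in three substeps. First, adapting the arc-length argument used in the proof of \Cref{polygon} and Claim~2 of \Cref{pseudorips}, show that $A$ lies strictly inside the convex hull of its five neighbors and that their angular order around $A$ matches the $5$-cycle order $B, G, D, E, F$; any attempt to place the five neighbors in a half-plane from $A$, or in a non-matching cyclic order, violates either a consecutive-pair ($<1$) or a non-consecutive-pair ($\geq 1$) distance constraint. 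Second, apply \Cref{triangle} with $\sigma = \{A\}$ and $\{B, G, D, E, F\} \subseteq \Lk(A)$ to conclude that $C \notin \conv(B, G, D, E, F)$, since $C$ is not adjacent to $A$. Third and critically, establish the inclusion $A \in \conv(B, D, E)$; once this is in hand, writing $A = \lambda_B B + \lambda_D D + \lambda_E E$ with $\lambda_B, \lambda_D, \lambda_E \geq 0$ summing to $1$, the adjacencies $d(C, B), d(C, D), d(C, E) < 1$ yield
\[
    d(C, A) \;\leq\; \lambda_B\, d(C, B) + \lambda_D\, d(C, D) + \lambda_E\, d(C, E) \;<\; \lambda_B + \lambda_D + \lambda_E \;=\; 1,
\]
contradicting the non-adjacency $d(C, A) \geq 1$.

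The principal obstacle is precisely the third substep: forcing $A \in \conv(B, D, E)$ in \emph{every} UDG realization of $W_5$, not merely the symmetric regular-pentagon one. My intended route is a case analysis on which of the three sub-triangles $\triangle BGD$, $\triangle BDE$, $\triangle BEF$ (the triangulation of the convex pentagon $BGDEF$ from the vertex $B$) contains $A$; if $A \notin \triangle BDE$, then by symmetry in the roles of $(G, D)$ and $(F, E)$ it suffices to rule out $A \in \triangle BGD$. In that case, the cone lemma (\Cref{cone}) applied to the pairs $(\overline{GB}, \overline{CD})$ and $(\overline{FB}, \overline{CE})$ -- whose non-crossing is forced by the non-edges $BD$, $BE$ together with $CG$, $CF$ -- pins $C$ to a specific half-plane determined by the perpendicular bisectors $L_{B \mid G}$ and $L_{B \mid F}$; combining this with the perpendicular-bisector machinery of \Cref{convhull} and \Cref{quadrilateral} applied to the induced quadrilaterals $BGDC$ and $BCEF$ forces $C$ to lie on incompatible sides of these bisectors, producing the desired contradiction. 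An alternative, perhaps cleaner, route is to bypass the casework entirely by upgrading \Cref{triangle} quantitatively to yield an explicit upper bound on $d(C, A)$ in terms of $d(C, B), d(C, D), d(C, E)$ whenever $C$ lies outside $\conv(B, G, D, E, F)$.
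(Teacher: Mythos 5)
Your overall logic is sound as far as it goes: if you could force $A \in \conv\{B,D,E\}$, then \Cref{triangle} (with $\sigma=\{C\}$) or your convexity estimate would immediately give $d(C,A)<1$ and the desired contradiction, and your substep 2 is a correct application of \Cref{triangle}. The genuine gap is exactly where you locate it, in substep 3, but it is worse than an unfinished case analysis: the statement ``$A \in \conv\{B,D,E\}$'' is \emph{false} for the wheel $W_5$ alone, so no amount of work on the subgraph $\sub{ABGDEF}$ can close it. Concretely, take $B=(0,0.8)$, $G=(-0.761,0.247)$, $D=(-0.470,-0.647)$, $E=(0.088,-0.370)$, $F=(0.289,0.247)$, $A=(-0.35,0.1)$: all five rim edges and all five hub edges have length $<1$, all five rim diagonals have length $\geq 1$, yet $A$ lies strictly on the $G$-side of the line $BD$, hence outside $\triangle BDE$. (The regular-pentagon picture is misleading here because the disk-intersection region for the hub clears the line $BD$ by a razor-thin margin, roughly $0.235$ versus $0.247$ at circumradius $0.8$, and that margin reverses once $E$ and $F$ are pulled toward $G$.) So any proof of substep 3 must make essential use of the constraints imposed by $C$ itself ($d(C,B),d(C,D),d(C,E)<1$ together with $d(C,F),d(C,G)\geq 1$), and your proposed route for doing so --- non-crossing of $\overline{GB}$ with $\overline{CD}$ via \Cref{cone}, then \Cref{quadrilateral} on $BGDC$ and $BCEF$ --- is only a plan: the hypotheses of \Cref{quadrilateral} require identifying a guard point along a side of each quadrilateral, and $A$ is \emph{not} adjacent to $C$, so it does not play the role of the point $X$ in that lemma. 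As written, the contradiction is asserted rather than derived.

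For comparison, the paper avoids the convexity claim about $A$ entirely and argues topologically: it shows, using the perpendicular-bisector bookkeeping, that the segment $\overline{CB}$ must lie inside $\conv\{C,D,E\}\cup\conv\{D,E,F,B,G\}$, i.e.\ the image of the edge $CB$ in the shadow is covered by images of $2$-simplices of $K$; hence $\Sc(X)$ is contractible, so $\pi_1(\RR(X))$ is trivial by \Cref{free}, contradicting $\pi_1(K)\cong\Z$. That route only needs to control the \emph{sides} of various lines on which $D,E,F,G$ lie, not the exact cell of a triangulation containing $A$, which is why it closes. Your treatment of minimality (explicit realizations of each $G-v$) is fine in outline and is in fact more detailed than the paper's one-line dismissal, but the non-realizability half needs either the paper's fundamental-group argument or a genuinely completed replacement for substep 3 that exploits $C$'s non-adjacency to $F$ and $G$.
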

\begin{proof}
    Let $K=\sub{ABCDEFG}$ denote the clique complex of the graph in Figure \ref{obs}. We first show that $K$ does not admit a planar-Rips structure. On contrary,  suppose it admits a planar-Rips structure; Without loss of generality, let $\RR(X)$ denote the planar-Rips structure, where $X=\left\{A,B,C,D,E,F,G\right\} \subset \R^2$. Note that the fundamental group $\pi_1(\RR(X))$ is non-trivial since the cycle $\sub{BCEF}$ at the vertex $B$ is not contractible in $\RR(X)$.

     \begin{figure}[h!]
  \begin{subfigure}[b]{0.45\textwidth}
    \centering 
    \includegraphics[scale=0.27]{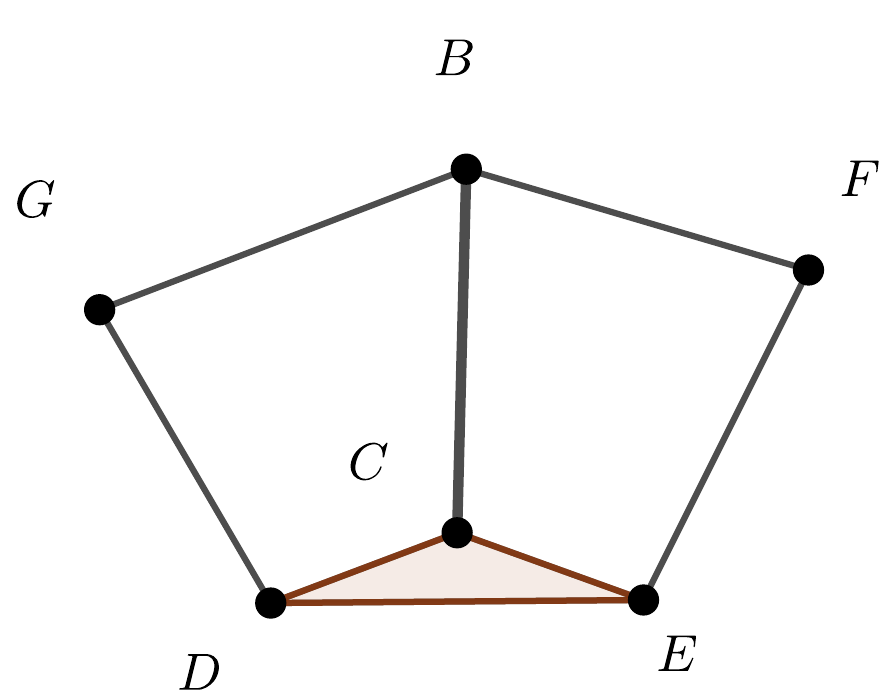}
    \caption{}
  \end{subfigure}
  \begin{subfigure}[b]{0.45\textwidth}
    \centering 
    \includegraphics[scale=0.3]{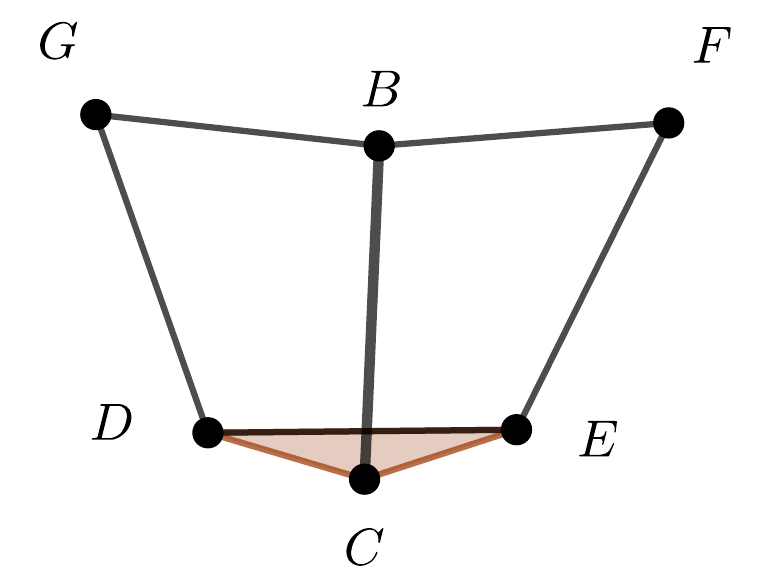}
        \caption{}
  \end{subfigure}
  \caption{ Two possible configurations of $\sub{BCDEFG}$, illustrating that $\overline{CB}$ lies in the interior of polygon $DCEFBG$.} \label{ininterior}
\end{figure}
    \begin{claim*}
        The line segment $\overline{CB} \in \conv\left\{C,D,E\right\} \cup \conv\left\{D,E,F,B,G\right\}$
    \end{claim*}
    \begin{proof} \renewcommand{\qedsymbol}{}
    Consider the line $L_{CB}$ and the perpendicular bisectors $L_{C|B}, L_{D|C}$, and $L_{C|E}$. Firstly, $E$ cannot be on the line $L_{CB}$ since $E$ is not adjacent to $B$, and $C$ is not adjacent to $F$. Likewise, $D$ cannot be on the line $L_{CB}$. Without loss of generality, assume $E$ is on the right side of the line $L_{CB}$ i.e., $E \in L^-_{CB}$. Considering the adjacency between the vertices, we have the following observations:
    \begin{enumerate}[label=(\alph*)]
        \item $D,E \in L^+_{C|B}$ and $F,G \in L^-_{C|B}$
        \item $B\in L^+_{C|E}$ and $F \in L^-_{C|E}$
        \item $B \in L^-_{D|C}$ and $G \in L^+_{D|C}$
        \item $E$ and $F$ are on same side of line $L_{CB}$
        \item $D$ and $G$ are on same side of line $L_{CB}$
    \end{enumerate}
    If suppose $D,G \in L^-_{CB}$. Since non-adjacent vertices $F$ and $G$ are adjacent to $B$, we have $G \in L^+_{B|F}$ and $F \in L^+_{B|G}$. But in contrast, this is not possible unless $F$ and $G$ are adjacent. Hence, $D,G \in L^+_{CB}$.

    With this arrangement of vertices, the segment $\overline{CB}$ lies on the left side of every line joining any two adjacent vertices of polygon $DCEFBG$ (in counterclockwise order). See Figure \ref{ininterior} for an illustration. It follows that $\overline{CB} \in \conv\left\{C,D,E\right\} \cup \conv\left\{D,E,F,B,G\right\}$. This proves the claim.
    \end{proof}
    
    Consequently, the shadow $\Sc(X)$ is contractible since $\sub{ABDEFG}$ is a cone over vertex $A$. In particular, $\pi_1(\RR(X))$ is trivial as follows from Theorem \ref{free}. This is a contradiction. Therefore, the complex $K$ does not admit a planar-Rips structure. 
    
    Additionally, it can be easily verified that every induced subcomplex of $K$ admits a planar-Rips structure. Therefore, $K$ is an obstruction of planar-Rips complex.
\end{proof}

Note that the minimal non-UDG in Figure \ref{obs} is a spanning subgraph of $\overline{K_2 + C_5}$, which is also a minimal non-UDG (see Example \ref{k2}). In particular, the class of Unit Disk graphs need not be hereditary under edge deletion.

\section{Concluding remarks}

The methods and results presented in this paper highlight the effectiveness of topological and geometric approaches towards characterizing planar-Rips complexes.  We have successfully  obtained a complete classification of planar-Rips complexes that have pseudomanifold and weak-pseudomanifold structure, demonstrating that they are isomorphic to iterated wedge of crosspolytopal spheres. Furthermore, we characterized the two-dimensional pure and closed simplicial complexes, identifying the octahedron as the only minimal 2-cycle. Perhaps, these structures have  homotopy type of wedge sum of spheres.

From an application standpoint, we have explored the isomorphism between the category of planar-Rips complexes and unit disk graphs, providing a characterization of a large class of disk graphs with uniform clique structure. In particular, the concept of obstructions in planar-Rips complexes has been introduced, a tool that offers a new framework for recognizing and classifying these structures. Notably, this concept has a natural generalization to Vietoris-Rips complexes inside any metric space. Following are some of the problems that remain open:
\begin{enumerate}[label=(\roman*)]
    \item While our findings support Adamaszek’s conjecture \cite{planar2} that the planar-Rips complexes are homotopy equivalent to wedge of spheres, the problem still remains unsolved.
    \item The minimal 2-cycle isomorphism to the boundary of the 2-crosspolytope invites further exploration in higher dimensions. Perhaps, this can be the final piece to proving the above conjecture.
\end{enumerate}
It is also worth noting that the challenge of determining the obstructions of planar-Rips complexes can be reformulated into the following problem:
\begin{prob}

    Given a flag complex $K$ that does not admit a planar-Rips structure, devise methodologies to determine the induced subcomplex(es) of $K$ that are obstructions of planar-Rips complex.
\end{prob}

Developing algorithms for recognizing forbidden planar-Rips complexes presents another significant avenue for research. It would be interesting to know how far this approach can be developed.

\end{document}